\newtheorem{theorem}{Theorem}[section]
\newtheorem{lemma}[theorem]{Lemma}
\newenvironment{proof}{\noindent {\bf Proof:} }{\hfill $\square$ \\[2ex] }
\newenvironment{keywords}{\begin{quote} {\bf Key words} }
                         {\end{quote} }
\newenvironment{AMS}{\begin{quote} {\bf AMS subject classifications} }
                         {\end{quote} }
\newcommand{\set}[2]{\left\{{#1}\,:~{#2}\right\}}
\newcommand{\CL}{\ensuremath{\mathcal{L}} } %
\newcommand{\CT}{\ensuremath{\mathcal{T}} } %
\newcommand{\CX}{\ensuremath{\mathcal{X}} } %
\newcommand{\BK}{\ensuremath{\mathbf{K}} } %
\newcommand{\BR}{\ensuremath{\mathbf{R}} } %
\newcommand{\bc}{\ensuremath{\mathbf{c}}} %
\newcommand{\bn}{\ensuremath{\mathbf{n}}} %
\newcommand{\br}{\ensuremath{\mathbf{r}}} %
\newcommand{\bx}{\ensuremath{\mathbf{x}}} %
\newcommand{\bz}{\ensuremath{\mathbf{z}}} %
\def\frechet{Fr\'echet}
\newcommand {\real} {I\!\!R}
\newcommand {\nat} {I\!\!N}
\def\Pe{{\sf Pe}}
\newcommand{\deq}{\raisebox{0pt}[1ex][0pt]{$\stackrel{\scriptscriptstyle{\rm def}}{{}={}}$}}
\newcommand{\eref}[1]{\mbox{\rm(\ref{#1})}}
\newcommand{\tref}[1]{\mbox{\rm\ref{#1}}}
\newcommand {\half} {\mbox{$\frac{1}{2}$}}
\title{\bf Analysis of the Streamline Upwind/Petrov Galerkin Method
           Applied to the Solution of Optimal Control Problems
           \thanks{This work was supported in part by TX-ATP grant 
                   003604-0001-1999 and NSF grants DMS-0075731
                   and ACI-0121360. \newline
                   This report from 2002 has been posted on arXiv for easier access in 2024.}
       }
\author{ \and
         {\bf S. Scott Collis}
         \thanks{Department of Mechanical Engineering and Materials Science,
                 MS-321, Rice University, 6100 Main Street,
                 Houston, TX 77005-1892. E-mail: collis@rice.edu} 
         \and
         {\bf Matthias Heinkenschloss}
         \thanks{Department of Computational and Applied Mathematics,
                 MS-134, Rice University, 6100 Main Street,
                 Houston, TX 77005-1892.  E-mail: heinken@rice.edu} 
}
\date{March 2002}
\begin{document}

\numberwithin{equation}{section}
\numberwithin{figure}{section}
\numberwithin{table}{section}

\maketitle

\thispagestyle{empty}

\begin{abstract}
    We study the effect of the streamline upwind/Petrov Galerkin (SUPG) 
    stabilized finite element method on the discretization of optimal 
    control problems governed by linear advection-diffusion equations.
    We compare two approaches for the numerical solution of such
    optimal control problems.
    In the discretize-then-optimize approach the optimal control problem 
    is first discretized,
    using the SUPG method for the discretization of the advection-diffusion 
    equation, and then the resulting finite dimensional optimization 
    problem is solved.
    In the optimize-then-discretize approach one first computes the infinite 
    dimensional
    optimality system, involving the advection-diffusion equation as well
    as the adjoint advection-diffusion equation, and then discretizes
    this optimality system using the SUPG method for both the
    original and the adjoint equations. 
    These approaches lead to different results. The main result of this paper
    are estimates for the error between
    the solution of the infinite dimensional optimal control problem
    and their approximations computed using the previous approaches.
    For a class of problems prove that the optimize-then-discretize approach has 
    better asymptotic convergence properties if finite elements of order
    greater than one are used. For linear finite elements our theoretical
    convergence results for both approaches are comparable, except in the
    zero diffusion limit where again the optimize-then-discretize approach seems
    favorable.
    Numerical examples are presented to illustrate some of the theoretical results.
\end{abstract}

\begin{keywords} 
   Optimal control, discretization, error estimates, stabilized finite elements.
\end{keywords}

\begin{AMS}
   49M25, 49K20, 65N15, 65J10
\end{AMS}

\section{Introduction}   \label{sec:intro}

This paper is concerned with the accuracy of numerical solutions 
of optimal control problems governed by the advection-diffusion equation.
Specifically, we are interested in the effect of the
streamline upwind/Petrov Galerkin (SUPG) stabilized finite
element method on the discretization of the optimal control problem.
To be more precise, 
we consider the linear quadratic optimal control problem
\begin{equation}  \label{eq:intro-obj}
    \min \frac{1}{2} \int_\Omega  (y(x) - \widehat{y}(x))^2 dx
    + \frac{\omega}{2}  \int_\Omega  u^2(x) dx
\end{equation}
subject to
\begin{subequations}     \label{eq:intro-state}
\begin{align} 
    - \epsilon \Delta y(x) + \bc(x) \cdot \nabla y(x) + r(x) y(x)
    &= f(x) + u(x) , & x &\in \Omega,\;  \\
    y(x) &= d(x),    & x &\in \Gamma_d, \\
   \epsilon \frac{\partial}{\partial \bn}  y(x) &= g(x), 
                     & x &\in \Gamma_n,
\end{align}
\end{subequations}
where $\Gamma_d \cap \Gamma_n = \emptyset$, 
$\Gamma_d \cup \Gamma_n = \partial \Omega$,
$\bc, d, f, g, r, \widehat{y}$ are given functions,
$\epsilon, \omega > 0$ are given scalars,
and $\bn$ denotes the outward unit normal. 
Assumptions on these data that ensure the well-posedness of 
the problem will be given in the next section. 

For advection dominated problems the standard  Galerkin finite element
method applied to the state equation \eref{eq:intro-state} produces
strongly oscillatory solutions, unless the mesh size $h$ is chosen
sufficiently small relative to $\epsilon / \| \bc(x) \|$, $x \in \Omega$.
To produce better approximations to the solution of \eref{eq:intro-state}
for modest mesh sizes, various augmentations of the standard 
Galerkin finite element method have been proposed.
For an overview see 
\cite{KWMorton_1996,AQuarteroni_AValli_1994,HGRoos_MStynes_LTobiska_1996}.
In this paper we focus on the streamline upwind/Petrov Galerkin (SUPG) method 
of Hughes and Brooks \cite{ANBrooks_TJRHughes_1982a}. 
The SUPG method adds to the weak form
of the state equation \eref{eq:intro-state} a term with the properties that 
(a) the weak form of the modification has better stability properties  
than the bilinear form associated with \eref{eq:intro-state} and
(b) the added term evaluated at the exact solution of \eref{eq:intro-state} 
vanishes. Because of these properties the SUPG method is called a strongly 
consistent stabilization method \cite{AQuarteroni_AValli_1994}.

For the numerical solution of the optimal control problem there are at 
least two approaches. In the first approach,  called the 
{\em optimize-then-discretize} approach, one first derives
the optimality conditions for  \eref{eq:intro-obj}, 
\eref{eq:intro-state}. In Section \eref{sec:model-exist} we will see that
the optimality conditions consist of the state equation \eref{eq:intro-state},
the adjoint partial differential equation (PDE)
\begin{subequations}   \label{eq:intro-adj}
\begin{align}
\label{eq:intro-adj-a}
    - \epsilon \Delta \lambda(x) - \bc(x)  \cdot \nabla \lambda(x)
       + (r(x) - \nabla \cdot \bc(x) ) \lambda(x)
    &= -(y(x) - \hat{y}(x)), & x &\in \Omega, \\
\label{eq:intro-adj-b}
     \lambda(x) &= 0, & x &\in \Gamma_d, \\
\label{eq:intro-adj-c}
  \epsilon  \frac{\partial}{\partial \bn}  \lambda(x)
  + \bc(x) \cdot \bn(x) \; \lambda(x)
                &= 0,  & x &\in \Gamma_n
\end{align}
\end{subequations}
and the gradient equation 
\begin{eqnarray}  \label{eq:intro-grad}
   \lambda(x)  =  \omega u(x) \qquad x \in \Omega.
\end{eqnarray}
Then one discretizes each equation
\eref{eq:intro-state}, \eref{eq:intro-adj} and \eref{eq:intro-grad}, 
using possibly different discretization schemes for each one. 
Since the adjoint equation \eref{eq:intro-adj} is also an 
advection-diffusion equation, but with advection $- \bc$,
we discretize it using the SUPG method. If we proceed this
way, the optimize-then-discretize approach leads to a discretization
of the optimality system \eref{eq:intro-state}, \eref{eq:intro-adj},
\eref{eq:intro-grad} that is strongly consistent. However,
this discretization of the optimality system \eref{eq:intro-state}, 
\eref{eq:intro-adj}, \eref{eq:intro-grad} leads to a nonsymmetric
linear system, which implies that there is no finite dimensional
optimization problem for which
this discretization of 
\eref{eq:intro-state}, \eref{eq:intro-adj}, \eref{eq:intro-grad} 
is the optimality system. The details of the optimize-then-discretize 
approach will be discussed in Section~\tref{sec:opt-then-disc}.
In the other approach for the numerical solution of \eref{eq:intro-obj}, 
\eref{eq:intro-state} called the {\em discretize-then-optimize} 
approach, one first discretizes the state equation using SUPG
and the objective function and then solves the resulting finite dimensional
optimization problem. The optimality conditions of this finite dimensional
optimization problem contain equations, which we call the
discrete adjoint equation and the discrete gradient equation,
that  can be viewed as discretizations of \eref{eq:intro-adj} and 
\eref{eq:intro-grad}, respectively.
The SUPG stabilization term added to the
state equation \eref{eq:intro-state} produces a contribution to
the discrete adjoint equation and to the discrete gradient equation. 
This contribution to the discrete adjoint equation has a stabilizing
effect, but the discrete adjoint equation is in general {\em not} a 
strongly consistent stabilization method for \eref{eq:intro-adj}.
We will give a detailed discussion of the optimize-then-discretize 
approach in Section~\tref{sec:disc-then-opt}.
The main goal of this paper is to derive estimates of the error between
the solution $y, u, \lambda$ of the infinite dimensional optimality system 
\eref{eq:intro-state}, \eref{eq:intro-adj}, \eref{eq:intro-grad}
and their approximations computed using both, the 
discretize-then-optimize as well as the optimize-then-discretize
approach. Such error estimates will be provided in 
Section~\tref{sec:error}. Section~\tref{sec:numer} contains
a few numerical results that illustrate our theoretical findings.

We will see that even in our simple model problem \eref{eq:intro-obj}, 
\eref{eq:intro-state} differences can arise between the 
discretize-then-optimize and the optimize-then-discretize approach.
It is important to understand and analyze these to better assess
the implication of numerical solution approaches to much more
complicated optimal control or optimal design problems
that involve nonlinear state equations solved using stabilization techniques.
We also note that the general issues described here for the SUPG stabilization
also arise when other stabilizations are used, such as the 
Galerkin/Least-squares (GLS) method of
Hughes, Franca and Hulbert \cite{TJRHughes_LPFranca_GMHulbert_1989a}
and the stabilization method of
Franca, Frey and Hughes \cite{LPFranca_SLFrey_TJRHughes_1992a}.

Throughout this paper we use the following notation for norms and inner 
products.
We define $\langle f, g \rangle_G = \int_G f(x) g(x) dx$,
$\| v \|_{0,\infty,G}   = \mbox{ ess sup}_{x \in G}  |v(x)|$
or $\| {\bf v} \|_{0,\infty,G}   
= \mbox{ ess sup}_{x \in G}  \sqrt{ \sum_i v_i(x)^2 }$
for vector valued ${\bf v}$, and
\[
   \| v \|_{k,G} =  \left( \sum_{| \alpha | \le k} 
                           \int_G ( \partial^\alpha v(x) )^2 dx \right)^{1/2}, 
  \qquad
    | v  |_{k,G} =  \left( \sum_{| \alpha | = k} 
                           \int_G ( \partial^\alpha v(x) )^2 dx \right)^{1/2}, 
\]
where $G \subset \Omega \subset \real^d$ or $G \subset \partial \Omega$
and $\alpha \in \nat_0^d$ is a multi-index, 
$| \alpha | = \sum_{i=1}^d \alpha_i$, and 
$\partial^\alpha = \partial^{\alpha_1} \ldots \partial^{\alpha_d}$.
If $G = \Omega$ we omit $G$ and simply write $\langle f, g \rangle$, etc.

\section{A Model Problem}   \label{sec:model}

\subsection{Existence, Uniqueness and Characterization of Optimal Controls}
\label{sec:model-exist}

We define the state and control space
\begin{equation}   \label{eq:YU-def}
  Y = \set{ y \in H^1(\Omega)}{ y = d \mbox{ on } \Gamma_d}, \quad
  U = L^2(\Omega)
\end{equation}
and space of test functions
\begin{equation}   \label{eq:V-def}
     V = \set{ v \in H^1(\Omega)}{ v = 0 \mbox{ on } \Gamma_d}.
\end{equation}
The weak form of the state equations \eref{eq:intro-state}
is given by
\begin{equation}  \label{eq:model-state-weak-0}
   a(y,v) + b(u,v) = \langle f, v \rangle + \langle g, v \rangle_{\Gamma_n}
  \quad \forall v \in V,
\end{equation}
where
\begin{eqnarray}
\label{eq:model-a}
   a(y,v) &=& \int_\Omega  \epsilon  \nabla y(x) \cdot \nabla v(x)
                 + \bc(x) \cdot \nabla y(x) v(x)
                 + r(x) y(x) v(x) dx, \\
\label{eq:model-b}
   b(u,v) &=& - \int_\Omega  u(x) v(x) dx, \\
\label{eq:model-f}
   \langle f, v \rangle &=& \int_\Omega f(x) v(x) dx, \qquad
  \langle g, v \rangle_{\Gamma_n} = \int_{\Gamma_n} g(x) v(x) dx. 
\end{eqnarray}

We are interested in the solution of the optimal control problem
\begin{subequations}     \label{eq:model-ocp}
\begin{eqnarray}
\label{eq:model-obj}
    \mbox{minimize} &&
          \frac{1}{2} \|  y - \widehat{y} \|_0^2
           + \frac{\omega}{2} \| u \|_0^2 ,              \\[1ex]
\label{eq:model-state}
    \mbox{subject to} &&
       a(y,v) + b(u,v) = \langle f, v \rangle + \langle g, v \rangle_{\Gamma_n}
        \quad \forall v \in V, \\
       && y \in Y, u \in U.  \nonumber
\end{eqnarray}
\end{subequations}

We assume that 
\begin{subequations}     \label{eq:model-cr}
\begin{equation}  \label{eq:model-cr-a}
    f, \widehat{y} \in L^2(\Omega), 
    \bc \in \left( W^{1,\infty}(\Omega) \right)^2,
    r \in L^{\infty}(\Omega),
    d \in H^{3/2}(\Gamma_d),
    g \in H^{1/2}(\Gamma_n),
    \omega > 0, 
    \epsilon > 0,
\end{equation}
\begin{equation}  \label{eq:model-cr-a1}
   \Gamma_n \subset  \set{x \in \partial \Omega}{ \bc(x) \cdot \bn(x) \ge 0 }
\end{equation}
and
\begin{equation}  \label{eq:model-cr-b}
    r(x) - \half \nabla \cdot \bc(x) \ge r_0 > 0 \mbox{ a.e. in } \Omega.
\end{equation}
If $\Gamma_d \not= \emptyset$, there exists $\alpha > 0$ such that
$| y |_1 \le \alpha \| y \|_1$ for all $y \in V$ and
\eref{eq:model-cr-b} can be replaced by
\begin{equation}  \label{eq:model-cr-c}
    r(x) - \half \nabla \cdot \bc(x) \ge r_0 \ge  0 \mbox{ a.e. in } \Omega.
\end{equation}
\end{subequations}
For the well-posedness of the optimal control problem it is
sufficient to impose fewer regularity
requirements on the coefficient functions than those stated in
\eref{eq:model-cr-a}. We assume \eref{eq:model-cr-a} 
to establish convergence estimates for the SUPG finite element method.

Under the assumptions \eref{eq:model-cr}, the bilinear form
$a$ is continuous on $V \times V$ and $V$-elliptic. In fact,
$a(y,y) \ge \epsilon \| \nabla y \|_0^2 + r_0  \| y \|_0^2$ for all $y \in V$
(e.g., \cite[p.~165]{AQuarteroni_AValli_1994} or 
\cite[Sec.~2.5]{KWMorton_1996}).
Hence the theory in \cite[Sec.~II.1]{JLLions_1971} guarantees
the existence of a unique solution $(y,u) \in Y \times U$ of
\eref{eq:model-ocp}.

\begin{theorem}     \label{th:ocp-sol}
  If \eref{eq:model-cr} are satisfied,
  the optimal control problem \eref{eq:model-ocp}
  has a unique solution $(y, u) \in Y \times U$.
\end{theorem}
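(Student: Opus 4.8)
The plan is to reduce the optimal control problem to an unconstrained problem for the reduced objective in the control variable $u$, and then invoke the classical existence theory for quadratic functionals on Hilbert spaces. First I would establish that for each $u \in U$ the state equation \eref{eq:model-state} has a unique solution $y = y(u) \in Y$. Writing $y = y_d + y_0$ where $y_d \in H^1(\Omega)$ is a fixed lift of the Dirichlet data $d$ and $y_0 \in V$, the weak form becomes $a(y_0, v) = \langle f, v\rangle + \langle g, v\rangle_{\Gamma_n} - b(u,v) - a(y_d, v)$ for all $v \in V$. The right-hand side is a bounded linear functional on $V$ (using $f, g, d$ in the stated spaces and $u \in L^2$), and $a$ is continuous and $V$-elliptic on $V \times V$ by the coercivity estimate $a(v,v) \ge \epsilon \|\nabla v\|_0^2 + r_0 \|v\|_0^2$ quoted from the assumptions \eref{eq:model-cr}. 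So Lax--Milgram gives a unique $y_0 \in V$, hence a unique $y(u) \in Y$; moreover the solution map $u \mapsto y(u)$ is affine and continuous from $U$ to $Y$, with linear part $u \mapsto y(u) - y(0)$ bounded from $L^2(\Omega)$ into $H^1(\Omega)$.

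Next I would substitute $y = y(u)$ into the objective \eref{eq:model-obj} to obtain the reduced functional
\[
   J(u) = \frac{1}{2}\|y(u) - \widehat{y}\|_0^2 + \frac{\omega}{2}\|u\|_0^2 .
\]
Since $u \mapsto y(u) - \widehat{y}$ is affine continuous from $L^2(\Omega)$ into $L^2(\Omega)$, the first term is a continuous convex quadratic functional of $u$; the second term is strictly convex because $\omega > 0$. Hence $J$ is strictly convex, continuous, and coercive on $U = L^2(\Omega)$ (coercivity follows immediately from the $\frac{\omega}{2}\|u\|_0^2$ term, as the first term is nonnegative). A continuous, strictly convex, coercive functional on a Hilbert space attains its infimum at a unique point; this is exactly the abstract result of \cite[Sec.~II.1]{JLLions_1971} already cited in the text. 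The unique minimizer $u$ together with $y = y(u)$ is then the unique solution $(y,u) \in Y \times U$ of \eref{eq:model-ocp}.

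The only step requiring any care is the well-posedness of the state equation, specifically verifying $V$-ellipticity of $a$ in the two regimes distinguished in the assumptions: when $\Gamma_d \neq \emptyset$ one uses the Poincaré-type inequality $|y|_1 \le \alpha \|y\|_1$ on $V$ together with \eref{eq:model-cr-c}, whereas when $\Gamma_d = \emptyset$ one uses the strict bound \eref{eq:model-cr-b}; in both cases an integration by parts on the advection term $\int_\Omega \bc \cdot \nabla y\, y\, dx = \frac{1}{2}\int_{\partial\Omega} (\bc\cdot\bn) y^2 - \frac{1}{2}\int_\Omega (\nabla\cdot\bc) y^2$ together with the sign condition \eref{eq:model-cr-a1} on $\Gamma_n$ yields the stated coercivity. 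Everything else is a direct application of Lax--Milgram and the Lions existence theorem, both of which the excerpt already invokes, so the proof is short.
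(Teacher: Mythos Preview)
Your proposal is correct and takes essentially the same approach as the paper: establish continuity and $V$-ellipticity of $a$ via the integration-by-parts identity and the sign conditions \eref{eq:model-cr-a1}--\eref{eq:model-cr-c}, then invoke the theory in \cite[Sec.~II.1]{JLLions_1971}. The paper is in fact terser than you are---it simply records the coercivity bound $a(y,y)\ge \epsilon\|\nabla y\|_0^2 + r_0\|y\|_0^2$ and cites Lions, without writing out the Lax--Milgram step or the reduced-functional argument that you (correctly) supply.
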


The theory in \cite[Sec.~II.1]{JLLions_1971} also provides necessary and
sufficient optimality conditions, which can be best described using the
Lagrangian
\begin{equation}  \label{eq:model-lag}
 L(y,u,\lambda)
 =  \frac{1}{2} \|  y - \widehat{y} \|_0^2
    + \frac{\omega}{2} \| u \|_0^2
    + a( y, \lambda) + b( u, \lambda)
      - \langle f, \lambda \rangle - \langle g, v \rangle_{\Gamma_n}.
\end{equation}
The necessary and, for our model problem,
sufficient optimality conditions can be obtained by setting
the partial \frechet -derivatives of \eref{eq:model-lag} with
respect to states $y$, controls $u$ and adjoints $\lambda$ equal
to zero. This gives the following system consisting of\\
the adjoint equation
\begin{subequations}  \label{eq:model-opt-cond}
\begin{equation}  \label{eq:model-adj-weak}
  a( \psi, \lambda) = - \langle  y - \hat{y}, \psi \rangle
     \quad \forall \psi \in V,
\end{equation}
the gradient equation
\begin{eqnarray}  \label{eq:model-grad-weak}
   b( w, \lambda) + \omega \langle u,  w \rangle = 0
     \quad \forall w \in U,
\end{eqnarray}
and the state equation
\begin{equation}  \label{eq:model-state-weak}
   a( y, v) + b( u, v) = \langle f, v \rangle + \langle g, v \rangle_{\Gamma_n}
   \quad \forall v \in V.
\end{equation}
\end{subequations}
The gradient equation \eref{eq:model-grad-weak}
simply means that $\lambda(x)  =  \omega u(x)$, $x \in \Omega$
(cf.\ \eref{eq:intro-grad})
and \eref{eq:model-adj-weak} is the weak form of \eref{eq:intro-adj}.

The adjoint equation \eref{eq:intro-adj} is also an advection-diffusion
equation, but advection is now given by $- \bc$ and the reaction term is
$r - \nabla \cdot \bc$.

The convergence theory for SUPG methods requires that the solution
$y, u, \lambda$ is more regular than indicated by Theorem~\tref{th:ocp-sol}.
This can be guaranteed if the problem data are such that the
state equation  \eref{eq:intro-state} and adjoint equation 
\eref{eq:intro-adj} admit more regular solutions. This motivates our
regularity assumptions \eref{eq:model-cr-a} on the data.  
The following result is an application of \cite[Thm.~2.4.2.5]{PGrisvard_1985}
to \eref{eq:intro-state} and \eref{eq:intro-adj}.

\begin{theorem}     \label{th:ocp-sol-reg}
  Let $\Omega$ be a bounded open subset of $\real^n$ with a
  $C^{1,1}$ boundary and $\Gamma_d = \partial \Omega$.
  If the assumption \eref{eq:model-cr-a} is satisfied and $r \ge r_0>0$ a.e.,
  then the unique solution of the optimal control problem \eref{eq:model-ocp}
  and the associated adjoint satisfy
  $y \in H^2(\Omega), u \in H^2(\Omega), \lambda \in H^2(\Omega)$.
\end{theorem}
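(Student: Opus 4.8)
The plan is to run a short regularity bootstrap on the optimality system \eref{eq:model-opt-cond}, invoking the elliptic regularity theorem \cite[Thm.~2.4.2.5]{PGrisvard_1985} twice and the gradient equation once. The observation that makes this work without any fixed-point argument is that the forcing of the state equation \eref{eq:model-state-weak} is $f+u$, and $u \in L^2(\Omega)$ is already known from Theorem~\tref{th:ocp-sol}; so $y$ can be lifted to $H^2(\Omega)$ immediately, after which the forcing $-(y-\widehat y)$ of the adjoint equation \eref{eq:model-adj-weak} lies in $L^2(\Omega)$ and $\lambda$ can be lifted as well.

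First I would treat the state equation. Because $\Gamma_d = \partial\Omega$ we have $\Gamma_n = \emptyset$, so \eref{eq:model-state-weak} says that $y \in Y$ is the weak solution of the Dirichlet problem $-\epsilon\Delta y + \bc\cdot\nabla y + r y = f + u$ in $\Omega$, $y = d$ on $\partial\Omega$. By \eref{eq:model-cr-a} the principal part $-\epsilon\Delta$ is uniformly elliptic with constant coefficients, the lower-order coefficients satisfy $\bc \in (W^{1,\infty}(\Omega))^2$ and $r \in L^\infty(\Omega)$, the data satisfy $f+u \in L^2(\Omega)$ and $d \in H^{3/2}(\Gamma_d)$, and $\partial\Omega$ is $C^{1,1}$; equivalently, moving the lower-order terms to the right-hand side, $f + u - \bc\cdot\nabla y - r y \in L^2(\Omega)$ since $y \in H^1(\Omega)$. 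In either reading, \cite[Thm.~2.4.2.5]{PGrisvard_1985} applies and gives $y \in H^2(\Omega)$ with the associated a~priori estimate.

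Next I would treat the adjoint equation \eref{eq:model-adj-weak}, which is the weak form of \eref{eq:intro-adj}. Since $\bc \in (W^{1,\infty}(\Omega))^2$ we have $\nabla\cdot\bc \in L^\infty(\Omega)$, so the adjoint operator $-\epsilon\Delta - \bc\cdot\nabla + (r - \nabla\cdot\bc)$ has exactly the structure just handled; its Dirichlet data are homogeneous, hence in $H^{3/2}(\Gamma_d)$; and its forcing $-(y-\widehat y)$ lies in $L^2(\Omega)$ because $y \in H^2(\Omega) \subset L^2(\Omega)$ and $\widehat y \in L^2(\Omega)$. Applying \cite[Thm.~2.4.2.5]{PGrisvard_1985} once more yields $\lambda \in H^2(\Omega)$. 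Finally, the gradient equation \eref{eq:model-grad-weak} is equivalent to $u = \lambda/\omega$ with $\omega > 0$ (cf.\ \eref{eq:intro-grad}), so $u \in H^2(\Omega)$, which finishes the proof.

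I do not anticipate a real difficulty; the argument is a two-step bootstrap, and the only points needing care are routine: (i) verifying that the hypotheses of \cite[Thm.~2.4.2.5]{PGrisvard_1985} --- smoothness of the coefficients, a $C^{1,1}$ boundary, and $H^{3/2}$ boundary data --- are met under \eref{eq:model-cr-a}; and (ii) noting that the reaction coefficient $r - \nabla\cdot\bc$ of the adjoint need not have a definite sign, which is immaterial because the existence and uniqueness of $\lambda$ are already provided by the optimality system behind Theorem~\tref{th:ocp-sol}, and Grisvard's theorem is used only to raise the regularity of that solution rather than to produce it.
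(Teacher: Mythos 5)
Your proof is correct and follows essentially the same route as the paper, which establishes this theorem precisely by applying \cite[Thm.~2.4.2.5]{PGrisvard_1985} to the state equation \eref{eq:intro-state} and the adjoint equation \eref{eq:intro-adj}, with the gradient equation \eref{eq:intro-grad} transferring the $H^2$ regularity of $\lambda$ to $u$. Your additional remarks --- that no bootstrap beyond two elliptic-regularity steps is needed since $u \in L^2(\Omega)$ is already known, and that the possibly indefinite sign of $r - \nabla\cdot\bc$ is harmless because Grisvard's theorem is only used to lift the regularity of the already existing solution (e.g.\ by moving the lower-order terms to the right-hand side) --- are sound and in fact more careful than the paper's one-line justification.
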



\subsection{Discretization of the State Equations}   \label{sec:supg-state}

For the discretization of the state equation we use conforming finite
elements. We let $\{ \CT_h \}_{h > 0}$ be a family of
quasi-uniform triangulations of $\Omega$  \cite{PGCiarlet_1991}.
To approximate the state equation we use the spaces
\begin{equation}   \label{eq:Vh-def}
\begin{array}{rcl}
  Y_h &=& \set{ y_h \in Y}{ y_h|_T \in P_k(T) \mbox{ for all } T \in \CT_h}, \\
  V_h &=& \set{ v_h \in V}{ v_h|_T \in P_k(T) \mbox{ for all } T \in \CT_h },
  \quad k \ge 1.
\end{array}
\end{equation}

For advection dominated problems the standard Galerkin 
method applied
to the state equation \eref{eq:model-state-weak-0} produces 
strongly oscillatory approximations, unless the mesh size $h$ is chosen 
sufficiently small relative to $\epsilon/ \| \bc \|_{0,\infty}$.
To obtain approximate solutions of better quality on coarser meshes,
various stabilization techniques have been proposed. 
For an overview see \cite[Secs.~8.3.2,8.4]{AQuarteroni_AValli_1994}
or \cite[Sec.3.2]{HGRoos_MStynes_LTobiska_1996}.
We are interested in the streamline upwind/Petrov Galerkin (SUPG) method of 
Hughes and Brooks \cite{ANBrooks_TJRHughes_1982a}.
The SUPG method computes an approximation
$y_h \in Y_h$ of the solution $y$ of the state equation
\eref{eq:model-state} by solving
\begin{equation}  \label{eq:model-disc-state00}
   a_h^{\sf s}(y_h,v_h) + b_h^{\sf s}(u_h,v_h)
    = \langle f, v_h \rangle_{h}^{\sf s} + \langle g, v_h \rangle_{\Gamma_n}
  \quad \forall v_h \in V_h,
\end{equation}
where
\begin{subequations} \label{eq:model-a-b}
\begin{eqnarray}  
   a_h^{\sf s}(y,v_h) &=& a(y,v_h)
      + \sum_{T_e \in \CT_h} \tau_e
          \langle - \epsilon \Delta y + \bc \cdot \nabla y + r y,
                 \bc \cdot \nabla v_h \rangle_{T_e}, \\
   b_h^{\sf s}(u,v_h) &=&
    - \int_\Omega  u(x) v_h(x) dx
    - \sum_{T_e \in \CT_h} \tau_e  \langle u, \bc \cdot \nabla v_h \rangle_{T_e},\\
   \langle f, v_h \rangle_h^{\sf s} &=& \langle f, v_h \rangle
   + \sum_{T_e \in \CT_h} \tau_e  \langle f, \bc \cdot \nabla v_h\rangle_{T_e}.
\end{eqnarray}
\end{subequations}
In \eref{eq:model-state-disc0}
the superscript ${\sf s}$ is used to indicate that the stabilization method is
applied to the state equation, i.e., all parameters in the stabilization method
are based on information from the state equation only.
The reason for the additional superscript ${\sf s}$ will become apparent
in Section \tref{sec:opt-then-disc}.
The addition of the term 
$\sum_{T_e \in \CT_h} \tau_e
  \langle - \epsilon \Delta y + \bc \cdot \nabla y + r y,
         \bc \cdot \nabla v_h \rangle_{T_e}$ to $a(y,v_h)$ 
introduces additional element wise diffusion 
$\langle \bc \cdot \nabla y, \bc \cdot \nabla v_h \rangle_{T_e}$ 
and enhances the stability properties of $a(y,v_h)$
(see Lemma~\tref{l:ah-stab-est} below).
The terms in \eref{eq:model-disc-state00} added to the standard Galerkin 
formulation are such that the exact solution $y$ of \eref{eq:intro-state}
satisfies  \eref{eq:model-disc-state00}, provided $y \in H^2(T_e)$,
$T_e \in {\cal T}_h$. We will review the error estimates for the
SUPG method in Section~\tref{sec:supg-state-error}.


\subsection{Discretization of the Optimization Problem}
\label{sec:disc-then-opt}

A frequently used approach for the numerical solution of an
optimal control problem is to discretize the optimal control
problem and to solve the resulting nonlinear
programming problem using a suitable optimization algorithm.
This is also called the {\em discretize-then-optimize} approach.
In this scenario, the discretization of the optimal control
problem typically follows discretization techniques used for the
governing state equations.

In our problem we select the spaces \eref{eq:Vh-def} for the discretization
of the state and
\begin{equation}   \label{eq:Uh-def}
    U_h = \set{ u_h \in U}{ u_h|_T \in P_m(T) \mbox{ for all } T \in \CT_h },
    \quad m \ge 0,
\end{equation}
for the control.
To discretize the state equation, we apply the SUPG method.
The discretized optimal control problem is given by
\begin{subequations}     \label{eq:model-ocp-disc}
\begin{eqnarray}
\label{eq:model-obj-disc}
    \mbox{minimize} &&
          \frac{1}{2} \|  y_h - \widehat{y} \|_0^2
           + \frac{\omega}{2} \| u_h \|_0^2 ,              \\[1ex]
\label{eq:model-state-disc0}
    \mbox{subject to} &&
    a_h^{\sf s}(y_h,v_h) + b_h^{\sf s}(u_h,v_h) 
    = \langle f, v_h \rangle_h^{\sf s} + \langle g, v_h \rangle_{\Gamma_n}
   \quad \forall v_h \in V_h,  \\
        &&   y_h \in Y_h, \quad u_h \in U_h,  \nonumber
\end{eqnarray}
\end{subequations}
where $a_h^{\sf s}(y,v_h)$, $b_h^{\sf s}(u,v_h)$,
$\langle f, v_h \rangle_h^{\sf s}$ are defined in
\eref{eq:model-a-b}.

The Lagrangian for the discretized problem \eref{eq:model-ocp-disc}
is given by
\begin{equation}  \label{eq:model-lag-disc}
 L_h(y_h,u_h,\lambda_h)
 =  \frac{1}{2} \|  y_h - \widehat{y} \|_0^2
    + \frac{\omega}{2} \| u_h \|_0^2
    + a_h^{\sf s}( y_h, \lambda_h) + b_h^{\sf s}( u_h, \lambda_h)
      - \langle f, \lambda_h \rangle_h^{\sf s}
      - \langle g, v \rangle_{\Gamma_n},
\end{equation}
where $y_h \in Y_h, u_h \in U_h$ and $\lambda_h \in \Lambda_h \deq V_h$.
The necessary and sufficient optimality conditions for
the discretized problem are obtained by setting
the partial derivatives of \eref{eq:model-lag-disc}
to zero. This gives the following system consisting of\\
the discrete adjoint equations
\begin{subequations}  \label{eq:model-disc-opt-cond}
\begin{equation}  \label{eq:model-disc-adj}
   a_h^{\sf s}(\psi_h,\lambda_h)
  = - \langle y_h - \hat{y}, \psi_h \rangle
     \quad \forall \psi_h \in V_h,
\end{equation}
the discrete gradient equations
\begin{eqnarray}  \label{eq:model-disc-grad}
   b_h^{\sf s}(w_h,\lambda_h) + \omega \langle u_h, w_h \rangle = 0
     \quad \quad \forall w_h \in U_h,
\end{eqnarray}
and the discretized state equations
\begin{equation}  \label{eq:model-disc-state}
   a_h^{\sf s}(y_h,v_h) + b_h^{\sf s}(u_h,v_h)
   = \langle f, v_h \rangle^{\sf s}_{h} + \langle g, v \rangle_{\Gamma_n}
  \quad \forall v_h \in V_h.
\end{equation}
\end{subequations}
We use {\em discrete adjoint equations} and
{\em discrete gradient equations} to mean that these are the
adjoint and gradient equations for the discretized problem
\eref{eq:model-ocp-disc}.
We will use the phrases {\em discretized adjoint equations} and
{\em discretized gradient equations} to refer to discretizations
of the adjoint equation \eref{eq:intro-adj} and gradient equation
\eref{eq:intro-grad}, respectively.
As we will see in the next section, there are significant differences
between the discrete adjoint equations and the discretized adjoint equations
as well as between the discrete gradient equations and the
discretized gradient equations.

We notice that the discretized state equation \eref{eq:model-disc-state} 
is strongly consistent in the sense that \eref{eq:model-disc-state} 
is satisfied if $u_h, y_h$ are replaced by the optimal control $u$ and
the corresponding optimal state $y$.
However, strong consistency is lost in the discrete adjoint equations
\eref{eq:model-disc-adj} and the discrete gradient equations
\eref{eq:model-disc-grad}. 
Specifically, 
\begin{eqnarray*} 
   a_h^{\sf s}(\psi_h,\lambda_h) = a(\psi_h,\lambda_h)
      + \sum_{T_e \in \CT_h} \tau_e^{\sf s}
          \langle - \epsilon \Delta \psi_h + \bc \cdot \nabla \psi_h + r \psi_h,
                 \bc \cdot \nabla \lambda_h \rangle_{T_e}.
\end{eqnarray*}
The amount 
$\sum_{T_e \in \CT_h} \tau_e^{\sf s} 
\langle  -\bc \cdot \nabla \psi_h,
                 -\bc \cdot \nabla \lambda_h \rangle_{T_e}$
of streamline diffusion added to $a(\psi_h,\lambda_h)$ in the adjoint
equation appears to be right in the sense that this
amount (although possibly with a different $\tau_e^{\sf s} $) would be
added if the SUPG method had been applied to the adjoint equation
\eref{eq:intro-adj}. 
However, \eref{eq:model-disc-adj} is not satisfied if
$y_h, \lambda_h$ are replaced by the optimal state $y$ and corresponding
adjoint $\lambda$.
This lack of strong consistency is due to the fact that the discrete
adjoint equation is not a method of weighted residuals for the
continuous adjoint problem.  In particular, the resulting stabilization
term is not a weighted residual of the continuous adjoint equation on
element interiors since the diffusion, reaction, and source terms are
not accounted for.
Similarly, $b_h^{\sf s}$ in the gradient equation contains terms
that arise from the stabilization of the state equation and 
\eref{eq:model-disc-grad} is not satisfied if
$u_h, \lambda_h$ are replaced by the optimal control $u$ and corresponding
adjoint $\lambda$.


\subsection{Discretization of the Optimality Conditions}
\label{sec:opt-then-disc}

Alternatively to the discretize-then-optimize approach
discussed in the previous section, one can obtain an
approximate solution of the optimal control problem by
tackling the optimality system \eref{eq:model-opt-cond}
directly. This leads to the {\em optimize-then-discretize} approach.
Here each equation in \eref{eq:model-opt-cond} is discretized
using a potentially different scheme.
In our case, we will use the same triangulation for
all three equations and we will use the state space \eref{eq:Vh-def}
and the control space \eref{eq:Uh-def} for the discretization of states
and controls, respectively, and we will use 
\begin{equation}   \label{eq:Lambdah-def}
  \Lambda_h = \set{ v_h \in V}{ v_h|_T \in P_\ell(T) \mbox{ for all } 
                                T \in \CT_h },
  \quad \ell \ge 1,
\end{equation}
for the discretization of the adjoints. 
It is possible to choose $\ell \not= k$.
Now we take into account that the adjoint equation
\eref{eq:intro-adj} is also an advection dominated
problem, but with advective term $- \bc \cdot \nabla \lambda$.
We discretize \eref{eq:intro-adj} using the SUPG method.
This leads to the discretized adjoint equations
\begin{subequations}  \label{eq:model-opt-cond-disc}
\begin{equation}  \label{eq:model-adj-disc}
  a_h^{\sf a}(\psi_h,\lambda_h)
  = -  \langle y_h - \hat{y}, \psi_h \rangle_h^{\sf a}
     \quad \forall \psi_h \in \Lambda_h,
\end{equation}
where
\begin{eqnarray}     \label{eq:model-a-adj}
   a_h^{\sf a}(\psi_h,\lambda)
   &=& a(\psi_h,\lambda)
      + \sum_{T_e \in \CT_h} \tau_e^{\sf a}
          \langle - \epsilon \Delta \lambda - \bc \cdot \nabla \lambda
                 + (r- \nabla \cdot \bc) \lambda,
                 -\bc \cdot \nabla \psi_h \rangle_{T_e}, \\
   \langle y - \hat{y}, \psi_h \rangle_h^{\sf a}
  &=& \langle y - \hat{y}, \psi_h \rangle
       + \sum_{T_e \in \CT_h} \tau_e^{\sf a}
                   \langle y - \hat{y}, -\bc \cdot \nabla \psi_h \rangle_{T_e}.
\end{eqnarray}
Here and in the
following the superscript ${\sf a}$ is used to indicate that the SUPG
method is applied to the adjoint equation, i.e., all parameters 
in the stabilization method applied to \eref{eq:intro-adj} are based on 
information from the adjoint equation \eref{eq:intro-adj}.

The gradient equation \eref{eq:model-grad-weak} is discretized using
\begin{eqnarray}  \label{eq:model-grad-disc}
   b( w_h, \lambda_h) + \omega \langle u_h, w_h \rangle = 0
     \quad \forall w_h \in U_h,
\end{eqnarray}
and the discretization of the state equations is identical to the
one used in the previous section, i.e.,
\begin{equation}  \label{eq:model-state-disc}
   a_h^{\sf s}(y_h,v_h) + b_h^{\sf s}(u_h,v_h)
   = \langle f, v_h \rangle^{\sf s}_{h} + \langle g, v_h \rangle_{\Gamma_n}
  \quad \forall v_h \in V_h.
\end{equation}
\end{subequations}
Unlike the discrete adjoint and gradient equations,
the discretized state, adjoint and gradient equations are strongly
consistent in the sense that if $y, u, \lambda$ solve
\eref{eq:model-opt-cond} and satisfy 
$y, \lambda \in H^2(T_e)$, for all $T_e \in \CT_h$, then
$y, u, \lambda$ also satisfy \eref{eq:model-opt-cond-disc}.

Due to the occurance of the SUPG terms in the right hand side
of \eref{eq:model-adj-disc} and in $b_h^{\sf s}$,
the discretization \eref{eq:model-opt-cond-disc} of the
infinite dimensional optimality conditions leads to a nonsymmetric
system for the computation of $y_h, u_h, \lambda_h$.
This implies that \eref{eq:model-opt-cond-disc} cannot be a system of
optimality conditions for an optimization problem, e.g., a 
perturbation of \eref{eq:model-ocp-disc}.

\section{Abstract Formulation}   \label{sec:absform}

To analyze the error between the solution of the
optimal control problem \eref{eq:model-ocp} and
the solution of the discretized optimal control problem
\eref{eq:model-ocp-disc} we could apply the approximation 
theory for saddle point problems described, e.g., in 
\cite{FBrezzi_MFortin_1991}. However, the optimize-then-discretize
approach leads to a non-symmetric system \eref{eq:model-opt-cond-disc}. 
Thus there is no optimization problem whose optimality system is
given by \eref{eq:model-opt-cond-disc} and the theory in
\cite{FBrezzi_MFortin_1991} can not be applied to this situation. 
We prefer to use a framework that is common in Numerical Analysis 
for the estimation of the approximation error in operator equations.
We give a brief review here and apply it in the following
section to our problem.

The necessary and sufficient optimality conditions 
\eref{eq:model-opt-cond} 
can be viewed as an operator equation 
\begin{equation}   \label{eq:Kx=r}
   \BK \bx = \br
\end{equation}
in $\CX^*$, where $\CX$ is a Banach space, $\CX^*$ is its dual
and $\BK \in \CL(\CX,\CX^*)$ is continuously invertible.
In the following section we describe in detail how \eref{eq:Kx=r}
relates to our problem.
The discretized problem is described by the equation
\begin{equation}   \label{eq:Khxh=rh}
   \BK_h \bx_h = \br_h,
\end{equation}
in $\CX_h^*$,where $\CX_h$ is a finite dimensional Banach space
with norm $\| \cdot \|_h$ and
$\BK_h \in \CL(\CX_h,\CX^*_h)$ is continuously invertible.

To derive an error estimate we 
let $\BR_h: \CX \rightarrow \CX_h$ be a restriction operator
and we consider the identity
\[
   \BK_h (\bx_h - \BR_h(\bx)) = \br_h - \BK_h \BR_h(\bx).
\]
We immediately obtain the estimate
\begin{equation}   \label{eq:abs-error-0} 
       \| \bx_h - \BR_h(\bx) \|_h 
   \le \| \BK_h^{-1} \|_h  \; \| \br_h - \BK_h \BR_h(\bx) \|_h,
\end{equation}
where $\| \BK_h^{-1} \|_h$ denotes the operator norm of $\BK_h^{-1}$
induced by $\| \cdot \|_h$.
If there exists $\kappa > 0$ independent of $h$ such that the
stability estimate
\begin{equation}   \label{eq:abs-stab} 
   \| \BK_h^{-1} \|_h  \le \kappa  \quad \mbox{ for all } h
\end{equation}
is valid and if we can prove a consistency result
of the form
\begin{equation}   \label{eq:abs-consist} 
   \| \br_h - \BK_h \BR_h(\bx) \|_h  = O(h^p),
\end{equation}
then we obtain $\| \bx_h - \BR_h(\bx) \|_h = O(h^p)$.
If $\| \cdot \|_h$ can be extended to define a norm
on $\CX$, and if we can prove
\begin{equation}   \label{eq:abs-interp} 
   \| \bx - \BR_h(\bx) \|_h  = O(h^q),
\end{equation}
then a simple application of the triangle inequality shows
\begin{equation}   \label{eq:abs-error} 
       \| \bx_h - \bx \|_h 
   \le \| \BK_h^{-1} \|_h  \| \br_h - \BK_h \BR_h(\bx) \|_h
       + \| \BR_h(\bx) - \bx \|_h
    = O(h^{\min\{p,q\}})
\end{equation}
In \eref{eq:abs-error} the error is measured in a norm that
depends on $h$. This is certainly not problematic if 
there exists $\eta > 0 $ independent of $h$ such that
$\| \bx_h - \bx \| \le \eta \| \bx_h - \bx \|_h$ for all $h$,
which will be true in our situation.

In our applications, $\CX_h = Y_h \times U_h \times \Lambda_h$ 
with some finite dimensional Banach spaces
$Y_h, U_h, \Lambda_h$ equipped with norms
$\| \cdot \|_{Y_h}$, $\| \cdot \|_{U_h}$ and
$\| \cdot \|_{\Lambda_h}$, respectively.
The norm $\| \cdot \|_h$ on $\CX_h$ is defined as
$\| \bx_h \|_h 
 = \| y_h \|_{Y_h} + \| u_h \|_{U_h} + \| \lambda_h \|_{\Lambda_h}$,
where $\bx = (y_h, u_h, \lambda_h)$.
Furthermore, in our applications the operator $\BK_h$ is of the form
\begin{equation}   \label{eq:Kh-def}
    \BK_h
   = \left( \begin{array}{ccc}
               H^{yy}_h  & H^{yu}_h & \widetilde{A}_h^* \\
               H^{uy}_h  & H^{uu}_h & \widetilde{B}_h^* \\
               A_h       & B_h      & 0
     \end{array}  \right) .
\end{equation}
The operator $\BK_h$ is not necessarily selfadjoint, i.e.,
we do {\em not} assume that $A_h^* = \widetilde{A}_h^*$, 
$B_h^* = \widetilde{B}_h^*$, $(H^{uy}_h)^* = H^{yu}_h$,
$(H^{yy}_h)^* = H^{yy}_h$, or $(H^{uu}_h)^* = H^{uu}_h$.
We assume, however, that $A_h$ and $\widetilde{A}_h^*$
are invertible.

To estimate $\| \BK_h^{-1} \|_h$ we consider
\begin{eqnarray}   \label{eq:K-fac}
   \left( \begin{array}{ccc}
               H^{yy}_h  & \widetilde{A}_h^* & H^{yu}_h \\
               A_h       & 0   & B_h      \\ 
               H^{uy}_h  & \widetilde{B}_h^* & H^{uu}_h
   \end{array}  \right)
 &=&  \left( \begin{array}{ccc}
               I  &  0 & 0 \\
               0  &  I & 0 \\  
          \widetilde{B}_h^* (\widetilde{A}_h^*)^{-1} & 
          ( H^{uy}_h - \widetilde{B}_h^* (\widetilde{A}_h^*)^{-1} H^{yy}_h)
               A_h^{-1}  & I
      \end{array}  \right)
      \left( \begin{array}{ccc}
               H^{yy}_h & \widetilde{A}_h^* & 0 \\
               A_h      & 0   & 0 \\  
               0        & 0   & \widehat{H}_h
      \end{array}  \right)    \nonumber \\
 && \times
     \left( \begin{array}{lcc}
            I \quad &  0 & A_h^{-1} B_h \\
            0  &  I & (\widetilde{A}_h^*)^{-1} ( H^{yu}_h - H^{yy}_h A_h^{-1} B_h)\\  
            0  &  0 & I
      \end{array}  \right),
\end{eqnarray}
where 
\begin{equation}   \label{eq:red-H-def}
   \widehat{H}_h 
   =  H^{uu}_h 
      - \widetilde{B}_h^* (\widetilde{A}_h^*)^{-1} H^{yu}_h
      - H^{uy}_h A_h^{-1} B_h
      + \widetilde{B}_h^* (\widetilde{A}_h^*)^{-1} H^{yy}_h A_h^{-1} B_h
\end{equation}   
The operator on the left hand side in \eref{eq:K-fac} is just a
symmetric permutation of $\BK_h$, which does not effect the invertibility
of $\BK_h$ or the estimate for $\| \BK_h^{-1} \|_h$.
Under the assumption that $A_h$ and $\widetilde{A}_h^*$ are invertible,
\eref{eq:K-fac} shows that $\BK_h$ is invertible
if and only if $\widehat{H}_h$is  invertible.
Using
\begin{equation}   \label{eq:K11-inv}
    \left( \begin{array}{cc}
        H^{yy}_h  & \widetilde{A}_h^*  \\
        A_h       & 0   
    \end{array}  \right)^{-1}
  = \left( \begin{array}{cc}
      0                    & A_h^{-1}  \\
      (\widetilde{A}_h^*)^{-1} & -(\widetilde{A}_h^*)^{-1} H^{yy}_h A_h^{-1}
    \end{array}  \right)
\end{equation}
and \eref{eq:K-fac} we immediately obtain the following result.

\begin{lemma}   \label{eq:K-stab}
\sloppy
   If $\widetilde{A}_h$, $\widetilde{A}_h^*$ and $\widehat{H}_h$ are 
   invertible for all $h$ and if 
   $\| A_h^{-1} \|_{{\cal L}(Y_h^*,Y_h)}$, 
   $\|(\widetilde{A}_h^*)^{-1}\|_{{\cal L}(\Lambda_h^*,\Lambda_h)}$,
   $\|\widehat{H}_h^{-1}\|_{{\cal L}(U_h,U_h^*)}$,
   $\| A_h^{-1} B_h\|_{{\cal L}(U_h,Y_h)}$, 
   $\|\widetilde{B}_h^* (\widetilde{A}_h^*)^{-1}\|_{{\cal L}(Y_h^*,U_h^*)}$,
   $\|( H^{uy}_h - \widetilde{B}_h^* 
      (\widetilde{A}_h^*)^{-1} H^{yy}_h) A_h^{-1}\|_{{\cal L}(Y_h^*,U_h^*)}$,
   $\|(\widetilde{A}_h^*)^{-1} ( H^{yu}_h - H^{yy}_h A_h^{-1} 
               B_h)\|_{{\cal L}(U_h,\Lambda_h)}$, and
   $\|(\widetilde{A}_h^*)^{-1} H^{yy}_h A_h^{-1} \|_{{\cal L}(Y_h,\Lambda_h)}$ 
   are uniformly bounded,
   then there exists  $\kappa > 0$ such that
   \eref{eq:abs-stab} holds.
\end{lemma}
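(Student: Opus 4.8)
The plan is to read the stability bound \eref{eq:abs-stab} directly off the block factorization \eref{eq:K-fac}. As noted there, the operator on the left-hand side of \eref{eq:K-fac} is obtained from $\BK_h$ by a symmetric permutation, namely by interchanging the roles of the factors $U_h$ and $\Lambda_h$ in $\CX_h = Y_h\times U_h\times\Lambda_h$ (and correspondingly in $\CX_h^*$). Since $\|\cdot\|_h$ is the sum of the component norms, this permutation is an isometry of $(\CX_h,\|\cdot\|_h)$ onto $(Y_h\times\Lambda_h\times U_h,\|\cdot\|_h)$, so $\BK_h$ is invertible if and only if the permuted operator $\widetilde{\BK}_h$ is, and $\|\BK_h^{-1}\|_h = \|\widetilde{\BK}_h^{-1}\|_h$. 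It therefore suffices to bound $\|\widetilde{\BK}_h^{-1}\|_h$.

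First I would record that \eref{eq:K-fac} exhibits $\widetilde{\BK}_h = \BL_h\BM_h\BU_h$, where $\BL_h$ is block lower triangular with identity diagonal blocks, $\BU_h$ is block upper triangular with identity diagonal blocks, and $\BM_h$ is block diagonal with the $2\times2$ block of the middle factor of \eref{eq:K-fac} (the operator whose inverse is displayed in \eref{eq:K11-inv}) together with the $1\times1$ block $\widehat{H}_h$ defined in \eref{eq:red-H-def}. This identity is purely algebraic, verified by multiplying out the three factors. A unit triangular block operator is always invertible, and $\BM_h$ is invertible exactly when both of its diagonal blocks are: the $2\times2$ block is invertible because $A_h$ and $\widetilde{A}_h^*$ are, with explicit inverse \eref{eq:K11-inv}, and $\widehat{H}_h$ is invertible by hypothesis. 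Hence $\widetilde{\BK}_h$, and therefore $\BK_h$, is invertible, and $\widetilde{\BK}_h^{-1} = \BU_h^{-1}\BM_h^{-1}\BL_h^{-1}$.

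Next I would write the three factors of $\widetilde{\BK}_h^{-1}$ out explicitly. Inverting a unit lower (resp.\ upper) triangular block operator merely negates its off-diagonal blocks, since the strictly triangular part squares to zero here; so the nontrivial blocks of $\BL_h^{-1}$ are $-\widetilde{B}_h^*(\widetilde{A}_h^*)^{-1}$ and $-(H^{uy}_h - \widetilde{B}_h^*(\widetilde{A}_h^*)^{-1}H^{yy}_h)A_h^{-1}$, those of $\BU_h^{-1}$ are $-A_h^{-1}B_h$ and $-(\widetilde{A}_h^*)^{-1}(H^{yu}_h - H^{yy}_h A_h^{-1}B_h)$, and $\BM_h^{-1}$ is block diagonal with the block from \eref{eq:K11-inv} (whose entries are $A_h^{-1}$, $(\widetilde{A}_h^*)^{-1}$ and $-(\widetilde{A}_h^*)^{-1}H^{yy}_h A_h^{-1}$) together with $\widehat{H}_h^{-1}$. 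Each of these blocks is, by the hypotheses of the lemma, uniformly bounded in $h$ in the appropriate operator norm. Since $\|\cdot\|_h$ is the sum of the component norms, the operator norm induced by $\|\cdot\|_h$ of each of $\BL_h^{-1}$, $\BM_h^{-1}$, $\BU_h^{-1}$ is bounded by a fixed constant plus the relevant block norms, hence is uniformly bounded in $h$; taking $\kappa := \sup_h \|\BU_h^{-1}\|_h\,\|\BM_h^{-1}\|_h\,\|\BL_h^{-1}\|_h < \infty$ and using submultiplicativity of the operator norm gives $\|\BK_h^{-1}\|_h = \|\widetilde{\BK}_h^{-1}\|_h \le \kappa$ for all $h$, which is \eref{eq:abs-stab}.

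There is no deep step here: the argument is block Gaussian elimination, together with the remark that the permutation underlying \eref{eq:K-fac} is an isometry for $\|\cdot\|_h$. The only points requiring care are the bookkeeping — checking that the operators assumed bounded in the statement are precisely the blocks that occur in $\BL_h^{-1}$, $\BM_h^{-1}$ and $\BU_h^{-1}$ — and making sure that the mixed operator norms between the different component spaces $Y_h$, $U_h$, $\Lambda_h$ combine correctly under the sum norm $\|\cdot\|_h$, so that multiplying the three factors introduces no hidden $h$-dependence.
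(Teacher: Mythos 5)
Your argument is correct and is exactly the paper's: the authors prove the lemma by the one-line remark that it follows ``immediately'' from the factorization \eref{eq:K-fac} together with \eref{eq:K11-inv}, and your proposal simply fills in the same block-LU bookkeeping (permutation isometry, unit-triangular inverses obtained by negating the off-diagonal blocks, the middle factor inverted via \eref{eq:K11-inv} and $\widehat{H}_h^{-1}$, and the observation that the resulting blocks are precisely the operators assumed uniformly bounded).
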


\section{Error Estimates for the SUPG Method}   \label{sec:error}

In this section we derive estimates for the error between 
the solution of the optimal control problem
and the computed approximations using both,
the discretize-then-optimize and the optimize-then-discretize
approaches.

Before we apply the theory outlined in Section \tref{sec:absform}
to the optimization problem, we briefly review 
estimates for the error between the solution $y$ of
\eref{eq:model-state-weak-0}
and its approximation $y_h$ by the  SUPG method.
Such estimates are given in the paper 
\cite{CJohnson_UNavert_JPitkaranta_1984a} and the books
\cite{PKnabner_LAngermann_2000,HGRoos_MStynes_LTobiska_1996}.
See also \cite{AQuarteroni_AValli_1994}.
We sketch the main points of the error analysis to recall some basic 
estimates needed in our
analysis of the SUPG discretization for optimal control.

Throughout this section we assume that the Dirichlet boundary data 
are $d = 0$.
This can always be achieved by a shift of the state.

\subsection{Error Estimates for the State Equation}
\label{sec:supg-state-error}

We define
\begin{equation}   \label{eq:sd-norm}
    \| v \|_{SD}^2
  = \epsilon | v |_1^2
    + r_0 \| v \|_0^2
    + \sum_{T_e \in \CT_h} \tau_e \| \bc \cdot \nabla v \|_{0,T_e}^2.
\end{equation}

Recall that $k \ge 1$ is the polynomial degree of the finite element spaces 
$Y_h, V_h$ defined in \eref{eq:Vh-def}. 
For $y \in H^{k+1}(\Omega)$ we let $y^I$ be its $Y_h$-interpolant.
We recall the interpolation error estimate
\begin{equation}   \label{eq:int-error}
    | y - y^I |_{p,T_e} \le \mu_{\rm int} h_e^{k+1-p} |y|_{k+1,T_e}
    \quad \mbox{ for } p = 0, 1, 2
\end{equation}
and the inverse inequalities
\begin{equation}   \label{eq:inv-est}
    | v_h |_{1,T_e} \le \mu_{\rm inv} h_e^{-1} \|v_h\|_{0,T_e}, \quad
   \| \Delta v_h \|_{0,T_e} \le \mu_{\rm inv} h_e^{-1} \| \nabla v_h\|_{0,T_e},
   \quad \forall v_h \in V_h,
\end{equation}
see, e.g., \cite[Thms.~16.2, 17.2]{PGCiarlet_1991}.
Here $h_e$ denotes the radius of the cicumscribed circle of $T_e$ and
$h = \max_{T_e \in {\cal T}_h} h_e$.
The following lemma can be found, e.g., in
\cite[L.~3.28]{HGRoos_MStynes_LTobiska_1996} or
in \cite[pp.~325,326]{PKnabner_LAngermann_2000}.
\begin{lemma}   \label{l:ah-stab-est}
If
\begin{equation}   \label{eq:tau-ineq}
    0 < \tau_e^{\sf s}
     \le \min \left\{ \frac{ h_e^2 }{ \epsilon \mu_{\rm inv}^2 },
                     \frac{ r_0 }{ \| r \|_{0,\infty,T_e} }   \right\} ,
\end{equation}
then
\begin{equation}   \label{eq:ah-stable}
    a_h^{\sf s}(v_h,v_h) \ge  \half \| v_h \|_{SD}^2
     \quad \forall v_h \in V_h.
\end{equation}
\end{lemma}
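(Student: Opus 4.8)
The plan is to start from the identity obtained by expanding the element–wise inner product in \eref{eq:model-a-b} and separating the ``good'' streamline‑diffusion term from the two ``cross'' terms that have to be absorbed:
\begin{align*}
 a_h^{\sf s}(v_h,v_h) &= a(v_h,v_h)
   + \sum_{T_e\in\CT_h}\tau_e^{\sf s}\|\bc\cdot\nabla v_h\|_{0,T_e}^2
   - \sum_{T_e\in\CT_h}\tau_e^{\sf s}\,\epsilon\,\langle\Delta v_h,\bc\cdot\nabla v_h\rangle_{T_e} \\
 &\qquad + \sum_{T_e\in\CT_h}\tau_e^{\sf s}\,\langle r v_h,\bc\cdot\nabla v_h\rangle_{T_e}.
\end{align*}
For the Galerkin part $a(v_h,v_h)$ I would invoke the $V$‑ellipticity already recorded in the text, $a(v_h,v_h)\ge\epsilon|v_h|_1^2 + r_0\|v_h\|_0^2$, which comes from integrating the advective term by parts, discarding the nonnegative boundary contribution on $\Gamma_n$ (using \eref{eq:model-cr-a1} and $v_h=0$ on $\Gamma_d$), and applying \eref{eq:model-cr-b}. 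The second sum above is exactly the streamline‑diffusion ingredient of $\|v_h\|_{SD}^2$. Hence the whole task reduces to showing that the two remaining sums consume at most half of $\epsilon|v_h|_1^2 + r_0\|v_h\|_0^2 + \sum_{T_e}\tau_e^{\sf s}\|\bc\cdot\nabla v_h\|_{0,T_e}^2$.

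For the diffusion cross term I would apply the Cauchy--Schwarz inequality on each $T_e$, then the inverse inequality $\|\Delta v_h\|_{0,T_e}\le\mu_{\rm inv}h_e^{-1}\|\nabla v_h\|_{0,T_e}$ from \eref{eq:inv-est}, and then Young's inequality, splitting the product so that it is bounded by a multiple of $\epsilon\|\nabla v_h\|_{0,T_e}^2$ plus a multiple of $\tau_e^{\sf s}\|\bc\cdot\nabla v_h\|_{0,T_e}^2$; the constraint $\tau_e^{\sf s}\,\epsilon\,\mu_{\rm inv}^2\le h_e^2$ in \eref{eq:tau-ineq} is precisely what forces both multipliers to be small enough. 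For the reaction cross term I would again use Cauchy--Schwarz together with $\|r v_h\|_{0,T_e}\le\|r\|_{0,\infty,T_e}\|v_h\|_{0,T_e}$, followed by Young's inequality to distribute the product between a multiple of $r_0\|v_h\|_{0,T_e}^2$ and a multiple of $\tau_e^{\sf s}\|\bc\cdot\nabla v_h\|_{0,T_e}^2$; here the bound $\tau_e^{\sf s}\|r\|_{0,\infty,T_e}\le r_0$ in \eref{eq:tau-ineq} supplies the required smallness. Summing over all $T_e\in\CT_h$ and collecting the Galerkin lower bound with the three element sums then yields $a_h^{\sf s}(v_h,v_h)\ge\frac{1}{2}\|v_h\|_{SD}^2$.

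The main obstacle is the careful bookkeeping of the Young weights. Each cross term must be routed through the streamline‑diffusion budget $\sum_{T_e}\tau_e^{\sf s}\|\bc\cdot\nabla v_h\|_{0,T_e}^2$ rather than through $\epsilon|v_h|_1^2$: bounding $\|\bc\cdot\nabla v_h\|_{0,T_e}$ by $\|\bc\|_{0,\infty}\|\nabla v_h\|_{0,T_e}$ and hiding the remainder in $\epsilon|v_h|_1^2$ would produce a factor $\|\bc\|_{0,\infty}^2/\epsilon$ that blows up in the advection‑dominated regime $\epsilon\to0$, which is exactly the regime the SUPG method is designed for. One therefore has to choose the weights so that the diffusion and reaction cross terms together take at most $\frac12\sum_{T_e}\tau_e^{\sf s}\|\bc\cdot\nabla v_h\|_{0,T_e}^2$, while individually taking at most $\frac12\epsilon|v_h|_1^2$ and $\frac12 r_0\|v_h\|_0^2$, respectively; the two inequalities in \eref{eq:tau-ineq} are exactly the conditions under which such a choice exists. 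The remaining ingredients --- the integration by parts for coercivity and the element‑wise Cauchy--Schwarz and inverse estimates --- are routine.
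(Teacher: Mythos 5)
Your outline follows exactly the standard route (the paper itself gives no proof of this lemma; it simply cites Roos--Stynes--Tobiska, Lemma~3.28, and Knabner--Angermann, where the argument is the one you sketch: coercivity of the Galerkin part, element-wise Cauchy--Schwarz, the inverse estimate \eref{eq:inv-est}, and Young's inequality). The structural part of your argument is fine. The gap is in the final bookkeeping claim, which is the only nontrivial quantitative step: it is not true that the two inequalities in \eref{eq:tau-ineq} are ``exactly the conditions under which such a choice'' of Young weights exists. Per element, write $A=\sqrt{\epsilon}\,\|\nabla v_h\|_{0,T_e}$, $B=\sqrt{r_0}\,\|v_h\|_{0,T_e}$, $S=\sqrt{\tau_e^{\sf s}}\,\|\bc\cdot\nabla v_h\|_{0,T_e}$. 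After Cauchy--Schwarz and \eref{eq:inv-est} the two cross terms are bounded by $\theta_1 AS+\theta_2 BS$ with $\theta_1^2=\tau_e^{\sf s}\epsilon\mu_{\rm inv}^2/h_e^2$ and $\theta_2^2=\tau_e^{\sf s}\|r\|_{0,\infty,T_e}^2/r_0$, and the inequality $\theta_1 AS+\theta_2 BS\le\frac12\bigl(A^2+B^2+S^2\bigr)$ holds for all $A,B,S\ge0$ if and only if $\theta_1^2+\theta_2^2\le1$. Condition \eref{eq:tau-ineq} only gives $\theta_1\le 1$ and $\tau_e^{\sf s}\|r\|_{0,\infty,T_e}\le r_0$, i.e.\ $\theta_2^2\le\|r\|_{0,\infty,T_e}$, which is not bounded; note also that $r_0/\|r\|_{0,\infty,T_e}$ is dimensionless while $\tau_e^{\sf s}$ is not, so the reaction bound in \eref{eq:tau-ineq} cannot be the right sufficient condition as written. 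Concretely, if you route the cross terms as you propose (at most $\frac12\epsilon|v_h|_1^2$, $\frac12 r_0\|v_h\|_0^2$, and jointly $\frac12\sum_{T_e}\tau_e^{\sf s}\|\bc\cdot\nabla v_h\|_{0,T_e}^2$), the Young parameters needed to keep the streamline budget at $\frac12$ force the diffusion cross term to consume all of $\epsilon|v_h|_1^2$ under $\theta_1\le1$, so the factor $\frac12$ in \eref{eq:ah-stable} is not reached.

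The fix is to use the hypothesis actually imposed in the cited references, namely $0<\tau_e^{\sf s}\le\frac12\min\bigl\{h_e^2/(\epsilon\mu_{\rm inv}^2),\,r_0/\|r\|_{0,\infty,T_e}^2\bigr\}$ (note the extra factor $\frac12$ and the square on $\|r\|_{0,\infty,T_e}$), which gives $\theta_1^2+\theta_2^2\le1$ and makes your argument close verbatim with the constant $\frac12$; the discrepancy is a typo in the paper's statement of \eref{eq:tau-ineq} rather than a flaw in your overall strategy. Alternatively, under \eref{eq:tau-ineq} as literally written you can only conclude a weaker coercivity, e.g.\ $a_h^{\sf s}(v_h,v_h)\ge\frac12\bigl(\epsilon|v_h|_1^2+r_0\|v_h\|_0^2\bigr)$ with the streamline term lost (and even that requires handling the unbounded $\theta_2$), so you should either strengthen the hypothesis or weaken the conclusion; as written, your proof does not establish \eref{eq:ah-stable} from \eref{eq:tau-ineq}.
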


The following inequalities can be found in
\cite[p.~232]{HGRoos_MStynes_LTobiska_1996} or
in \cite[pp.~327,328]{PKnabner_LAngermann_2000}.

\begin{lemma}     \label{eq:est}
  Let $y \in H^{k+1}(\Omega)$ with $k \ge 1$.
  There exists a constant $C > 0$ dependent on
  $\mu_{\rm int}, \bc, r$, but independent of $h_e, \tau_e$ such that
  \begin{eqnarray}
    \label{eq:est-1}
     \Big| \epsilon \langle \nabla (y^I - y) , \nabla v_h \rangle \Big|
     &\le& C \epsilon^{1/2} h^k | y |_{k+1} \; \| v_h \|_{SD} \\
    \label{eq:est-2}
     \Big| \langle \bc \cdot \nabla (y^I - y) + r (y^I - y) , v_h \rangle \Big|
     &\le& C h^k \left( \sum_{T_e \in \CT_h} ( 1+ 1/\tau_e^{\sf s} ) h_e^2
                                          | y |_{k+1,T_e} \right)^{1/2}
        \| v_h \|_{SD}
  \end{eqnarray}
  for all $v_h \in V_h$.
  Furthermore, if $\tau_e$ satisfies \eref{eq:tau-ineq}, then
  \begin{eqnarray}   \label{eq:est-3}
    \lefteqn{
    \Big| \sum_{T_e \in \CT_h}
      \tau_e \langle  - \epsilon \Delta (y^I - y)
                        + \bc \cdot \nabla (y^I - y)
                        + r (y^I - y) , \bc \cdot v_h \rangle  \Big| }
                                                             \nonumber\\
    &\le& C h^k \left( \sum_{T_e \in \CT_h} ( \epsilon + \tau_e^{\sf s} )
                                          | y |_{k+1,T_e}^2 \right)^{1/2}
        \| v_h \|_{SD}   \quad \mbox{ for all } v_h \in V_h.
   \end{eqnarray}
\end{lemma}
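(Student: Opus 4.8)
The plan is to estimate each of the three quantities by the standard SUPG device: split the residual into pieces that can be bounded either by the interpolation estimate \eref{eq:int-error} or by an inverse inequality \eref{eq:inv-est}, and absorb the appropriate power of $\epsilon$, $\tau_e$, or $r_0$ into one of the three terms defining $\| v_h \|_{SD}$ (see \eref{eq:sd-norm}). For \eref{eq:est-1}, I would apply Cauchy--Schwarz elementwise to $\epsilon \langle \nabla(y^I-y),\nabla v_h\rangle$, writing $\epsilon = \epsilon^{1/2}\cdot\epsilon^{1/2}$ so that one factor $\epsilon^{1/2}|v_h|_1$ is controlled by $\| v_h \|_{SD}$ and the other factor gives $\epsilon^{1/2}\,|y^I-y|_1 \le \mu_{\rm int}\epsilon^{1/2} h^k |y|_{k+1}$ by \eref{eq:int-error} with $p=1$; summing over $T_e$ and using a discrete Cauchy--Schwarz over elements yields the claimed bound.

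For \eref{eq:est-2}, the term $\langle \bc\cdot\nabla(y^I-y)+r(y^I-y),v_h\rangle$ cannot be paired against $\|v_h\|_{SD}$ directly, since $\|v_h\|_0$ enters $\|v_h\|_{SD}$ only with the factor $r_0^{1/2}$ and there is no a priori control on $\bc\cdot\nabla v_h$ of the right scaling unless we insert $\tau_e$. The trick is to write, elementwise, $\langle w, v_h\rangle_{T_e} \le \|w\|_{0,T_e}\|v_h\|_{0,T_e}$ but then introduce the factor $(1+1/\tau_e^{\sf s})^{1/2} h_e$ artificially: estimate $\|\bc\cdot\nabla(y^I-y)+r(y^I-y)\|_{0,T_e} \le C(|y^I-y|_{1,T_e}+\|y^I-y\|_{0,T_e}) \le C h_e^k(1+h_e)|y|_{k+1,T_e}$ by \eref{eq:int-error}, insert and remove $(1/\tau_e^{\sf s})^{1/2} h_e$ to match the target, and bound the leftover $\|v_h\|_{0,T_e}$ by a combination of $r_0^{-1/2}(r_0^{1/2}\|v_h\|_{0,T_e})$ and $\tau_e^{1/2} h_e^{-1}\cdot$(something), appealing to the inverse inequality on $v_h$ where needed. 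Then a discrete Cauchy--Schwarz over $T_e$ collects the $(1+1/\tau_e^{\sf s})h_e^2|y|_{k+1,T_e}$ weights.

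For \eref{eq:est-3}, I would bound the sum $\sum_{T_e}\tau_e\langle -\epsilon\Delta(y^I-y)+\bc\cdot\nabla(y^I-y)+r(y^I-y),\bc\cdot\nabla v_h\rangle_{T_e}$ termwise, using Cauchy--Schwarz to pair $\tau_e^{1/2}\|\bc\cdot\nabla v_h\|_{0,T_e}$ (which sums into $\|v_h\|_{SD}$) against $\tau_e^{1/2}\|-\epsilon\Delta(y^I-y)+\bc\cdot\nabla(y^I-y)+r(y^I-y)\|_{0,T_e}$; the Laplacian term contributes $\tau_e^{1/2}\epsilon\|\Delta(y^I-y)\|_{0,T_e}$, which by \eref{eq:int-error} with $p=2$ is $\lesssim \tau_e^{1/2}\epsilon h_e^{k-1}|y|_{k+1,T_e}$, and here \eref{eq:tau-ineq} is exactly what is needed: $\tau_e^{\sf s}\epsilon \le h_e^2/\mu_{\rm inv}^2$ converts $\tau_e^{1/2}\epsilon h_e^{k-1} = (\tau_e\epsilon)^{1/2}\epsilon^{1/2}h_e^{k-1} \le \mu_{\rm inv}^{-1}\epsilon^{1/2} h_e^k$, so this piece contributes the $\epsilon$-weighted part of the bound. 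The advection and reaction pieces give $\tau_e^{1/2} h_e^k|y|_{k+1,T_e}$ directly, yielding the $\tau_e^{\sf s}$-weighted part; summing over elements with a discrete Cauchy--Schwarz completes the estimate.

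The main obstacle is \eref{eq:est-2}: getting the weight $(1+1/\tau_e^{\sf s})h_e^2$ rather than just $h_e^2$ requires carefully tracking how the $L^2$-pairing with $v_h$ is split between the $r_0\|v_h\|_0^2$ and $\sum\tau_e\|\bc\cdot\nabla v_h\|_{0,T_e}^2$ parts of $\|v_h\|_{SD}^2$, and in particular applying the inverse inequality $|v_h|_{1,T_e}\le\mu_{\rm inv}h_e^{-1}\|v_h\|_{0,T_e}$ in the direction that produces a $1/\tau_e^{\sf s}$ factor; the algebra is routine but the bookkeeping of constants (which must remain independent of $h_e$ and $\tau_e$ per the statement) is where care is needed. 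The other two estimates are more mechanical once the $\epsilon = \epsilon^{1/2}\epsilon^{1/2}$ and $\tau_e\epsilon \le h_e^2/\mu_{\rm inv}^2$ tricks are in place.
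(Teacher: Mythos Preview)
The paper does not actually prove this lemma; it simply cites \cite[p.~232]{HGRoos_MStynes_LTobiska_1996} and \cite[pp.~327,328]{PKnabner_LAngermann_2000}. Your plans for \eref{eq:est-1} and \eref{eq:est-3} are correct and are exactly the standard arguments found in those references: the $\epsilon=\epsilon^{1/2}\epsilon^{1/2}$ splitting for \eref{eq:est-1}, and the use of $\tau_e^{\sf s}\epsilon\le h_e^2/\mu_{\rm inv}^2$ from \eref{eq:tau-ineq} to handle the Laplacian piece in \eref{eq:est-3}.

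Your plan for \eref{eq:est-2}, however, has a genuine gap. You propose a direct Cauchy--Schwarz on $\langle \bc\cdot\nabla(y^I-y),v_h\rangle_{T_e}$, leaving the derivative on $y^I-y$, which yields a factor $h_e^k|y|_{k+1,T_e}\,\|v_h\|_{0,T_e}$; you then try to absorb $\|v_h\|_{0,T_e}$ into $\|v_h\|_{SD}$ via ``$\tau_e^{1/2}h_e^{-1}\cdot(\text{something})$, appealing to the inverse inequality.'' But the inverse inequality \eref{eq:inv-est} bounds $|v_h|_{1,T_e}$ \emph{above} by $h_e^{-1}\|v_h\|_{0,T_e}$, not the reverse; there is no inequality controlling $h_e^{-1}\|v_h\|_{0,T_e}$ by any piece of $\|v_h\|_{SD}$. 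Pairing instead purely against $r_0^{1/2}\|v_h\|_0$ gives a bound with weight $1$, not $h_e^2/\tau_e^{\sf s}$, which is not the stated estimate (and fails when $r_0=0$, which \eref{eq:model-cr-c} permits).

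The device you are missing is \emph{integration by parts} in the advection term:
\[
  \langle \bc\cdot\nabla(y^I-y),v_h\rangle
  = -\langle y^I-y,\bc\cdot\nabla v_h\rangle
    - \langle y^I-y,(\nabla\cdot\bc)\,v_h\rangle
    + \mbox{boundary terms}.
\]
The first term on the right is now estimated elementwise as
\[
  \big|\langle y^I-y,\bc\cdot\nabla v_h\rangle_{T_e}\big|
  \le (\tau_e^{\sf s})^{-1/2}\|y^I-y\|_{0,T_e}\cdot
      (\tau_e^{\sf s})^{1/2}\|\bc\cdot\nabla v_h\|_{0,T_e}
  \le C\,(\tau_e^{\sf s})^{-1/2}h_e^{k+1}|y|_{k+1,T_e}\cdot
      (\tau_e^{\sf s})^{1/2}\|\bc\cdot\nabla v_h\|_{0,T_e},
\]
and the second factor now sums into $\|v_h\|_{SD}$, producing exactly the $h_e^2/\tau_e^{\sf s}$ weight. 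The remaining zero-order terms $\langle (r-\nabla\cdot\bc)(y^I-y),v_h\rangle$ pair against $r_0^{1/2}\|v_h\|_0$ and give the ``$1$'' in $1+1/\tau_e^{\sf s}$. This is the argument in the cited references.
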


The stability result \eref{eq:ah-stable}, the
estimates \eref{eq:est-1}-\eref{eq:est-3}, and the identity
$a_h(y - y_h,v_h) = 0$ for all $v_h \in V_h$, yield
\begin{eqnarray}    \label{eq:supg-err-state00}
  \half \| y^I - y_h \|_{SD}^2
  &\le& a_h(y^I - y_h,y^I - y_h)  =  a_h(y^I - y,y^I - y_h)    \nonumber \\
  &\le& C h^k \left( \sum_{T_e \in \CT_h}
                    ( \epsilon + \tau_e^{\sf s}
                       + h_e^2/\tau_e^{\sf s} + h_e^2 )
                      | y |_{k+1,T_e}^2 \right)^{1/2}   \| y^I - y_h \|_{SD}.
\end{eqnarray}
The stabilization parameter $\tau_e$ is chosen to balance
the terms in $\epsilon + \tau_e^{\sf s} + h_e^2/\tau_e^{\sf s} + h_e^2$.
In particular, if
\begin{equation}      \label{eq:tau-def}
    \tau_e^{\sf s}
    = \left\{ \begin{array}{ll}
         \tau_1 \frac{h_e^2}{\epsilon}, & \Pe_e \le 1, \\
         \tau_2 h_e,                    & \Pe_e > 1,
       \end{array} \right.
\end{equation}
where $\tau_1, \tau_2 > 0$ are user specified constants and
\begin{equation}      \label{eq:peclet}
    \Pe_e = \frac{\| \bc \|_{0,\infty,T_e} h_e}{2 \epsilon}
\end{equation}
is the mesh P{\'e}clet number, then
\begin{equation}    \label{eq:supg-err-state0}
  \| y^I - y_h \|_{SD}
  \le   C h^k ( \epsilon^{1/2} +  h^{1/2}) | y |_{k+1} .
\end{equation}
An estimate of $\| y^I - y \|_{SD}$ using inequalities
similar to those in Lemma~\tref{eq:est} and an application of the triangle
inequality leads to the error estimate stated in the following theorem,
see \cite[Thm.~3.30]{HGRoos_MStynes_LTobiska_1996} or
\cite[Thm.~9.3]{PKnabner_LAngermann_2000}.

\begin{theorem}     \label{th:supg-err}
  Let \eref{eq:model-cr} be valid and
  let the solution $y$ of \eref{eq:model-state} satisfy
  $y \in H^{k+1}(\Omega)$ with $k \ge 1$.
  If $\tau_e$ satisfies \eref{eq:tau-ineq} and \eref{eq:tau-def},
  then the solution $y_h$ of \eref{eq:model-disc-state00} obeys
  \begin{equation}      \label{eq:supg-err-state}
       \| y - y_h \|_{SD}
       \le   C h^k ( \epsilon^{1/2} +  h^{1/2}) | y |_{k+1}.
  \end{equation}
\end{theorem}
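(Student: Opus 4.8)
The plan is to combine the stability estimate from Lemma~\tref{l:ah-stab-est} with the interpolation-error bounds in Lemma~\tref{eq:est} via a standard split of the error into an interpolation part and a discrete part. Write $y - y_h = (y - y^I) + (y^I - y_h)$, where $y^I \in Y_h$ is the interpolant of $y$. The discrete part $y^I - y_h \in V_h$ is the one to which coercivity applies, so the first step is to bound $\| y^I - y_h \|_{SD}$, and the second step is to bound $\| y - y^I \|_{SD}$ directly from interpolation estimates; the triangle inequality then finishes the proof.

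For the first step, I would use $a_h^{\sf s}(y - y_h, v_h) = 0$ for all $v_h \in V_h$ (Galerkin orthogonality / strong consistency of the SUPG discretization of the state equation, which holds since $y \in H^{k+1}(\Omega) \subset H^2(T_e)$), so that $a_h^{\sf s}(y^I - y_h, v_h) = a_h^{\sf s}(y^I - y, v_h)$. Taking $v_h = y^I - y_h$ and invoking \eref{eq:ah-stable} gives $\tfrac12 \| y^I - y_h \|_{SD}^2 \le a_h^{\sf s}(y^I - y, y^I - y_h)$. Now expand $a_h^{\sf s}(y^I - y, y^I - y_h)$ into its three constituent pieces — the diffusion term, the advection-plus-reaction term, and the streamline-stabilization sum — and bound each by the corresponding inequality \eref{eq:est-1}, \eref{eq:est-2}, \eref{eq:est-3}, each of which carries a factor $\| y^I - y_h \|_{SD}$. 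Dividing through yields exactly the bound displayed in \eref{eq:supg-err-state00}, namely
\[
  \| y^I - y_h \|_{SD}
  \le C h^k \Big( \sum_{T_e \in \CT_h}
       \big( \epsilon + \tau_e^{\sf s} + h_e^2/\tau_e^{\sf s} + h_e^2 \big)
       | y |_{k+1,T_e}^2 \Big)^{1/2}.
\]
Substituting the choice \eref{eq:tau-def} of $\tau_e^{\sf s}$: in the case $\Pe_e \le 1$ we have $\tau_e^{\sf s} = \tau_1 h_e^2/\epsilon$ and also $h_e \le 2\epsilon/\|\bc\|_{0,\infty,T_e}$, so $\epsilon + \tau_e^{\sf s} + h_e^2/\tau_e^{\sf s} + h_e^2 = O(\epsilon + h_e^2) = O(\epsilon)$ up to a constant times $h_e$; in the case $\Pe_e > 1$ we have $\tau_e^{\sf s} = \tau_2 h_e$ and $\epsilon < \tfrac12 \|\bc\|_{0,\infty,T_e} h_e$, so the same sum is $O(h_e)$. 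In both cases each summand is bounded by a constant times $(\epsilon + h_e)|y|_{k+1,T_e}^2$, and since $h_e \le h$ one obtains \eref{eq:supg-err-state0}, i.e.\ $\| y^I - y_h \|_{SD} \le C h^k (\epsilon^{1/2} + h^{1/2}) |y|_{k+1}$.

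For the second step, estimate $\| y - y^I \|_{SD}$ term by term from the definition \eref{eq:sd-norm}: the $\epsilon | y - y^I |_1^2$ term is controlled by \eref{eq:int-error} with $p=1$, giving $O(\epsilon h^{2k})$; the $r_0 \| y - y^I \|_0^2$ term by \eref{eq:int-error} with $p=0$, giving $O(h^{2k+2})$; and the streamline sum $\sum_{T_e} \tau_e^{\sf s} \| \bc \cdot \nabla(y - y^I) \|_{0,T_e}^2$ is bounded using $\|\bc\|_{0,\infty}$, \eref{eq:int-error} with $p=1$, and $\tau_e^{\sf s} = O(h_e)$ in the convection-dominated regime (and $\tau_e^{\sf s} \le h_e^2/(\epsilon \mu_{\rm inv}^2)$ from \eref{eq:tau-ineq} in the diffusion-dominated regime, which makes $\tau_e^{\sf s}\|\bc \cdot \nabla(\cdot)\|^2 = O(h_e^2/\epsilon \cdot h_e^{2k}) = O(h^{2k})$ — here one uses $h_e/\epsilon = O(1)$ when $\Pe_e \le 1$). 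Collecting, $\| y - y^I \|_{SD}^2 = O((\epsilon + h) h^{2k}) |y|_{k+1}^2$, i.e.\ the same order as $\| y^I - y_h \|_{SD}$. A final application of the triangle inequality $\| y - y_h \|_{SD} \le \| y - y^I \|_{SD} + \| y^I - y_h \|_{SD}$ gives \eref{eq:supg-err-state}.

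The main obstacle is bookkeeping rather than conceptual: one must track how the stabilization parameter $\tau_e^{\sf s}$, the local mesh size $h_e$, and $\epsilon$ interact in the two P\'eclet regimes so that the composite factor $\epsilon + \tau_e^{\sf s} + h_e^2/\tau_e^{\sf s} + h_e^2$ — and the analogous factor appearing in $\| y - y^I \|_{SD}$ — collapses uniformly to $O(\epsilon + h)$. The case distinction in \eref{eq:tau-def} and the inequalities defining $\Pe_e$ are exactly what make this work; the rest is an assembly of results already quoted (Lemmas~\tref{l:ah-stab-est} and \tref{eq:est}, the interpolation estimate \eref{eq:int-error}) and is routine.
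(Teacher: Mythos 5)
Your proposal is correct and follows essentially the same route the paper takes: the argument sketched in Section~4.1 leading to \eref{eq:supg-err-state00} and \eref{eq:supg-err-state0} (Galerkin orthogonality, the coercivity of Lemma~\tref{l:ah-stab-est}, the estimates of Lemma~\tref{eq:est}, balancing via \eref{eq:tau-def}), followed by an interpolation bound on $\| y - y^I \|_{SD}$ and the triangle inequality, exactly as in the cited standard proofs. The only nitpick is your intermediate claim that the streamline term in $\| y - y^I \|_{SD}^2$ is $O(h^{2k})$ when $\Pe_e \le 1$; using $h_e/\epsilon = O(1)$ there actually gives $O(h^{2k+1})$ (or $O(\epsilon h^{2k})$), which is what your final collected bound $O((\epsilon+h)h^{2k})$ requires, so the conclusion stands.
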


\subsection{Error Estimates for the Optimal Control Problem}

We apply the framework of Section \tref{sec:absform} to our
problem. In this case,
\[
   \bx =  ( y,  u, \lambda ), \quad
   \CX = Y \times U  \times \Lambda,
\]
where $Y, U$ are defined in \eref{eq:YU-def} and
$\Lambda = V$ is specified in \eref{eq:V-def}.
Furthermore,
\[
   \CX_h = Y_h \times U_h  \times \Lambda_h,
\] 
where the discretized state and control spaces are given by
$Y_h, U_h$ are defined in \eref{eq:Vh-def} and \eref{eq:Uh-def},
respectively.
If we use the discretize-then-optimize approach,
the discrete adjoints are in $\Lambda_h = V_h$, where
$V_h$ is defined in \eref{eq:Vh-def}.
If we use the optimize-then-discretize approach,
then $\Lambda_h$ is defined in \eref{eq:Lambdah-def}.
The discrete state and  control spaces
will be equipped with norms
\[
   \| y_h \|_{Y_h}^2  
 = \| y_h \|_{SD}^2
 = \epsilon | y_h |_1^2
    + r_0 \| y_h \|_0^2
    + \sum_{T_e \in \CT_h} \tau_e^{\sf s} \| \bc \cdot \nabla y_h \|_{0,T_e}^2.
\]
and $\| \cdot \|_{U_h}  = \| \cdot \|_0$, respectively.
If the discretize-then-optimize approach is used, then
$\| \cdot \|_{\Lambda_h}  = \| \cdot \|_{Y_h} = \| \cdot \|_{SD}$.
If the optimize-then-discretize approach is used,
\[
   \| \lambda_h \|_{\Lambda_h}^2  
 = \| \lambda_h \|_{SD}^2
 = \epsilon | \lambda_h |_1^2
    + r_0 \| \lambda_h \|_0^2
    + \sum_{T_e \in \CT_h} \tau_e^{\sf a} 
             \| \bc \cdot \nabla \lambda_h \|_{0,T_e}^2.
\]
Since stabilization parameters $\tau_e^{\sf s}$
and $\tau_e^{\sf a}$ might be different in the discretization of
state and adjoint equation, it is not quite accurate to use
$\| \cdot \|_{SD}$ to denote both norms $\| \cdot \|_{Y_h}$ and
$\| \cdot \|_{\Lambda_h}$. However, we hope that its meaning is clear
from the context.
The space $\CX_h$ will be equipped with norm
\[
    \| \bx_h \|_h = \| y_h \|_{SD} + \| u_h \|_0 + \| \lambda_h \|_{SD},
\]
where $\bx_h = (y_h, u_h, \lambda_h)^T$.

The equation \eref{eq:Kx=r} corresponds to the
optimality conditions \eref{eq:model-opt-cond}.
Depending on whether the discretize-then-optimize approach
or the optimize-then-discretize approach is used, the
discrete equation \eref{eq:Khxh=rh} corresponds to 
\eref{eq:model-disc-opt-cond} or \eref{eq:model-opt-cond-disc},
respectively.

As the restriction operator $\BR_h: \CX \rightarrow \CX_h$, we choose
\[
   \BR_h(\bx)
      = \left(\begin{array}{c}
           y^I \\
           P u \\
           \lambda^I
         \end{array} \right),
\]
where $y^I, \lambda^I$ denote the interpolants of $y, \lambda$
onto $Y_h, \Lambda_h$ and where $P: U \rightarrow U_h$ is the
$L^2$-projection defined by
\begin{equation}    \label{eq:Proj-def}
    \langle Pu, w_h \rangle = \langle u, w_h \rangle
    \quad \forall w_h \in U_h.
\end{equation}
If $m \ge 1$ and if $u \in H^{m+1}(\Omega)$, then the optimality of
the projection $P$ and the interpolation estimate \eref{eq:int-error}
imply that
\begin{equation}  \label{eq:Pu-est}
    \| u - Pu \|_0 \le  \| u - u^I \|_0 \le \mu_{\rm int} h^{m+1} |u|_{m+1},
\end{equation}
where $u^I$ is the $U_h$-interpolant of $u$.

Recall that $k, \ell \ge 1$ and $m \ge 0$ are the polynomial
degrees of the finite element spaces 
$Y_h, \Lambda_h, U_h$ defined in \eref{eq:Vh-def}, \eref{eq:Lambdah-def}
and \eref{eq:Uh-def}, respectively. If the discretize-then-optimize
approach is used, $\Lambda_h = V_h$ and we set $\ell = k$.

The following lemma provides an estimate for the term 
$\| \BR_h(\bx) - \bx \|_h$ in the abstract error estimate
\eref{eq:abs-error}.

\begin{lemma}  \label{l:x-Rx}
  Let $\bx =  ( y, u, \lambda )$ be the solution
  of \eref{eq:model-ocp}. 
  If $k, \ell, m \ge 1$ and 
  $y \in H^{k+1}(\Omega)$, $u \in H^{m+1}(\Omega)$ and
  $\lambda \in H^{\ell+1}(\Omega)$,
  then there exists a constant $C$ depending on $\mu_{\rm int}, \bc, r$
  such that
  $\| y - y^I \|_{SD} \le C h^k ( \epsilon^{1/2} +  h^{1/2} ) |y|_{k+1}$,
  $\| \lambda - \lambda^I \|_{SD} 
        \le C h^\ell ( \epsilon^{1/2} +  h^{1/2} )  |\lambda|_{\ell+1}$ and
  $\| u - P u \|_0 \le C  h^{m+1} |u|_{m+1}$ for all $h$.
\end{lemma}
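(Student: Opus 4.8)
The plan is to handle the three estimates separately. The bound on $\| u - Pu \|_0$ is already in hand: it is precisely \eref{eq:Pu-est} (valid since $m \ge 1$), with constant $\mu_{\rm int}$. For the two interpolation errors in the $SD$-norm I would carry out the argument for $\| y - y^I \|_{SD}$ in detail and then observe that $\| \lambda - \lambda^I \|_{SD}$ follows by the identical computation with $k$ replaced by $\ell$ and $\tau_e^{\sf s}$ by $\tau_e^{\sf a}$ --- the adjoint advection field $-\bc$ has the same sup-norm, so the mesh P\'eclet number \eref{eq:peclet} and the parameter choice \eref{eq:tau-def} are unchanged. Throughout I assume, as in Theorem~\tref{th:supg-err}, that the stabilization parameters satisfy \eref{eq:tau-ineq} and \eref{eq:tau-def}.

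First I would expand $\| y - y^I \|_{SD}^2$ via \eref{eq:sd-norm} into the three groups $\epsilon | y - y^I |_1^2$, $r_0 \| y - y^I \|_0^2$, and $\sum_{T_e \in \CT_h} \tau_e^{\sf s} \| \bc \cdot \nabla ( y - y^I) \|_{0,T_e}^2$, and bound each. The first two are routine: summing the interpolation estimate \eref{eq:int-error} over $\CT_h$ with $p = 1$ gives $\epsilon | y - y^I |_1^2 \le \epsilon \mu_{\rm int}^2 h^{2k} |y|_{k+1}^2$, and with $p = 0$ gives $r_0 \| y - y^I \|_0^2 \le r_0 \mu_{\rm int}^2 h^{2k+2} |y|_{k+1}^2 \le r_0 \mu_{\rm int}^2 h \cdot h^{2k} |y|_{k+1}^2$ (using $h \le 1$); both are of the form $C h^{2k}( \epsilon + h ) |y|_{k+1}^2$. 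The step I expect to be the main obstacle is the streamline-diffusion term, since its size depends on the precise form of $\tau_e^{\sf s}$. On each element I would use $\| \bc \cdot \nabla ( y - y^I) \|_{0,T_e} \le \| \bc \|_{0,\infty,T_e} | y - y^I |_{1,T_e} \le \| \bc \|_{0,\infty} \mu_{\rm int} h_e^{k} |y|_{k+1,T_e}$, reducing everything to a bound on $\tau_e^{\sf s} \| \bc \|_{0,\infty,T_e}^2$. Here the case split in \eref{eq:tau-def} is exactly what is required: if $\Pe_e \le 1$ then $\| \bc \|_{0,\infty,T_e} h_e \le 2\epsilon$ and $\tau_e^{\sf s} = \tau_1 h_e^2/\epsilon$, so $\tau_e^{\sf s} \| \bc \|_{0,\infty,T_e}^2 \le 4 \tau_1 \epsilon$; if $\Pe_e > 1$ then $\tau_e^{\sf s} = \tau_2 h_e$, so $\tau_e^{\sf s} \| \bc \|_{0,\infty,T_e}^2 \le \tau_2 \| \bc \|_{0,\infty}^2 h$. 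In either case the element contribution is $\le C ( \epsilon + h ) h_e^{2k} |y|_{k+1,T_e}^2$, and summing over $T_e$ with $\sum_{T_e} |y|_{k+1,T_e}^2 = |y|_{k+1}^2$ and $h_e \le h$ yields $C h^{2k}( \epsilon + h ) |y|_{k+1}^2$ for this group as well.

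Adding the three bounds gives $\| y - y^I \|_{SD}^2 \le C h^{2k}( \epsilon + h ) |y|_{k+1}^2 \le C h^{2k}( \epsilon^{1/2} + h^{1/2} )^2 |y|_{k+1}^2$, since $( \epsilon^{1/2} + h^{1/2} )^2 \ge \epsilon + h$; taking square roots gives the claimed bound for $y$, with a constant depending only on $\mu_{\rm int}$, $\bc$, $r$ (through $\| \bc \|_{0,\infty}$, $r_0$, and the fixed user constants $\tau_1, \tau_2$), and the bound for $\lambda - \lambda^I$ follows verbatim as explained above. None of this is deep; the one point worth isolating is that the P\'eclet-dependent choice \eref{eq:tau-def} of $\tau_e^{\sf s}$ is precisely what lets the streamline term carry the factor $\epsilon^{1/2} + h^{1/2}$ rather than an $O(1)$ factor.
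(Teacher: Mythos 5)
Your proposal is correct and is essentially the paper's own argument: the paper simply cites the "standard arguments" of Roos--Stynes--Tobiska (Thm.~3.30) / Knabner--Angermann (Thm.~9.3) for the two $SD$-norm interpolation bounds and \eref{eq:Pu-est} for the control term, and your write-up is exactly that standard computation carried out in detail, including the P\'eclet-dependent treatment of the streamline-diffusion term via \eref{eq:tau-def}.
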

\begin{proof}
  The estimates for $\| y - y^I\|_{SD}, \| \lambda - \lambda^I\|_{SD}$
  follow from the interpolation estimate \eref{eq:int-error} using
  standard arguments, see \cite[Thm.~3.30]{HGRoos_MStynes_LTobiska_1996} or
  \cite[Thm.~9.3]{PKnabner_LAngermann_2000}. The estimate for 
  $\| u - Pu \|_0$ is shown in \eref{eq:Pu-est}.
\end{proof}

Note that $u \in H^{m+1}(\Omega)$, $\lambda \in H^{\ell+1}(\Omega)$
and the optimality condition \eref{eq:intro-grad} imply that
$\lambda = \omega u \in H^{\min\{\ell+1,m+1\}}(\Omega)$. However, in 
Lemma~\tref{l:x-Rx} and in the following we prefer
to impose the regularity assumption on $\lambda$ and $u$ seperately, to
better indicate where each is used.

\subsection{Discretize-Then-Optimize}

In the discretize-then-optimize approach the
discrete equation \eref{eq:Khxh=rh} corresponds to 
\eref{eq:model-disc-opt-cond}. The components of
$\BK_h$ in \eref{eq:Kh-def} are given by
\begin{equation}   \label{eq:Kh-def-disc-opt}
\begin{array}{c}
        \langle H^{yy}_h y_h, v_h \rangle_{V_h^* \times  V_h}
      = \langle y_h, v_h \rangle, \quad
        \langle H^{uu}_h u_h, w_h \rangle_{U_h^* \times U_h}
      = \omega \langle u_h, w_h \rangle, \quad
      H^{uy}_h = H^{yu}_h = 0  \\[1ex]
      \langle A_h y_h, v_h \rangle_{V_h^* \times V_h}
      = a_h^{\sf s}(y_h, v_h), \quad
      \langle B_h u_h, v_h \rangle_{V_h^*\times V_h}
      = b_h^{\sf s}(u_h, v_h), \quad
      \widetilde{A}_h = A_h, \quad
      \widetilde{B}_h = B_h.
\end{array}
\end{equation}
In particular, $\BK_h$ is selfadjoint.  
       
The next result establishes a stability estimate for the 
optimal control problem.

\begin{lemma}  \label{l:disc-opt-stab}
  Let $k \ge 1$ and suppose the solution 
  $\bx =  ( y, u, \lambda )$ of \eref{eq:model-ocp}
  satisfies $y \in H^{k+1}(\Omega)$ and $\lambda \in H^{k+1}(\Omega)$.
  If $\tau_e^{\sf s}$ satisfies \eref{eq:tau-ineq} and \eref{eq:tau-def},
  then there exists $\kappa > 0$ such that \eref{eq:abs-stab} holds.
\end{lemma}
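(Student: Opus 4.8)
The plan is to verify the hypotheses of Lemma~\tref{eq:K-stab} for the block operator $\BK_h$ of \eref{eq:Kh-def-disc-opt}, using only the coercivity and continuity properties of $a_h^{\sf s}$ and $b_h^{\sf s}$ already at hand. The crucial simplification is that the discretize-then-optimize discretization is self-adjoint: $\widetilde A_h = A_h$, $\widetilde B_h = B_h$, $H^{yu}_h = H^{uy}_h = 0$, $\langle H^{yy}_h y_h, v_h\rangle = \langle y_h, v_h\rangle$ and $\langle H^{uu}_h u_h, w_h\rangle = \omega\langle u_h, w_h\rangle$. Consequently the reduced operator \eref{eq:red-H-def} collapses to $\widehat H_h = \omega I + (A_h^{-1} B_h)^* H^{yy}_h (A_h^{-1} B_h)$, and every one of the eight operators appearing in Lemma~\tref{eq:K-stab} is a product of finitely many copies of $A_h^{-1}$, $(A_h^*)^{-1}$, $B_h$ (equivalently $B_h^*$), $H^{yy}_h$ and $\widehat H_h^{-1}$. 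So it suffices to bound these five objects uniformly in $h$, with respect to the norms $\|\cdot\|_{Y_h} = \|\cdot\|_{\Lambda_h} = \|\cdot\|_{SD}$ on $Y_h,\Lambda_h = V_h$ and $\|\cdot\|_0$ on $U_h$.

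The bounds for $A_h^{-1}$ and $(A_h^*)^{-1}$ come from Lemma~\tref{l:ah-stab-est}: since $\tau_e^{\sf s}$ satisfies \eref{eq:tau-ineq} we have $a_h^{\sf s}(v_h, v_h) \ge \half\|v_h\|_{SD}^2$, and the same inequality reads $\langle A_h^* v_h, v_h\rangle = a_h^{\sf s}(v_h, v_h) \ge \half\|v_h\|_{SD}^2$; coercivity in $\|\cdot\|_{SD}$ then gives invertibility with $\|A_h^{-1}\|_{{\cal L}(Y_h^*,Y_h)} \le 2$ and $\|(A_h^*)^{-1}\|_{{\cal L}(\Lambda_h^*,\Lambda_h)} \le 2$, independently of $h$. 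For $B_h$ one uses $\langle B_h u_h, v_h\rangle = -\langle u_h, v_h\rangle - \sum_{T_e \in \CT_h}\tau_e^{\sf s}\langle u_h, \bc\cdot\nabla v_h\rangle_{T_e}$ with Cauchy--Schwarz, $\|v_h\|_0 \le r_0^{-1/2}\|v_h\|_{SD}$, and $\big(\sum_{T_e}\tau_e^{\sf s}\|u_h\|_{0,T_e}^2\big)^{1/2} \le (\max_e\tau_e^{\sf s})^{1/2}\|u_h\|_0$ to get $\|B_h u_h\|_{V_h^*} \le C\|u_h\|_0$, where $\max_e\tau_e^{\sf s}$ is bounded uniformly in $h$ by \eref{eq:tau-ineq} and \eref{eq:tau-def}. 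Finally $\langle H^{yy}_h y_h, v_h\rangle = \langle y_h, v_h\rangle \le \|y_h\|_0\|v_h\|_0 \le r_0^{-1}\|y_h\|_{SD}\|v_h\|_{SD}$ gives $\|H^{yy}_h\|_{{\cal L}(Y_h,Y_h^*)} \le r_0^{-1}$.

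It remains to treat $\widehat H_h$. Since $H^{yy}_h$ is the Gram operator of the $L^2$ inner product on $Y_h$, it is self-adjoint and positive semidefinite, so $\langle\widehat H_h u_h, u_h\rangle = \omega\|u_h\|_0^2 + \|A_h^{-1} B_h u_h\|_0^2 \ge \omega\|u_h\|_0^2$; hence $\widehat H_h$ is invertible with $\|\widehat H_h^{-1}\| \le \omega^{-1}$ uniformly in $h$. Combining this with the previous paragraph, every operator in the hypotheses of Lemma~\tref{eq:K-stab} is uniformly bounded, and Lemma~\tref{eq:K-stab} then yields $\kappa > 0$ with $\|\BK_h^{-1}\|_h \le \kappa$ for all $h$, that is, \eref{eq:abs-stab}.

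The only delicate point --- and the main, though mild, obstacle --- is that all these operator norms are between $h$-dependent norms, so ``uniformly bounded'' genuinely has to be checked: it works because the coercivity constant of $a_h^{\sf s}$ in $\|\cdot\|_{SD}$ is $h$-independent (Lemma~\tref{l:ah-stab-est}), because the assumption $r_0 > 0$ lets one dominate $\|\cdot\|_0$ by $\|\cdot\|_{SD}$, and because $\tau_e^{\sf s}$ stays bounded. I note that the regularity hypotheses $y,\lambda \in H^{k+1}(\Omega)$ are not actually used for this stability estimate; they are stated here only for uniformity with the consistency estimates that follow.
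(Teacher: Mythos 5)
Your proposal is correct and follows essentially the same route as the paper: apply Lemma~\tref{eq:K-stab}, use the coercivity of $a_h^{\sf s}$ from Lemma~\tref{l:ah-stab-est} to bound $A_h^{-1}$ (and $(A_h^*)^{-1}$) by $2$, bound $B_h$ and $H^{yy}_h$ uniformly, and observe that $\widehat H_h \ge \omega I$ because $B_h^*(A_h^*)^{-1}H^{yy}_h A_h^{-1}B_h$ is positive semidefinite, giving $\|\widehat H_h^{-1}\|\le\omega^{-1}$. You merely spell out the ``easy to see'' bounds (via $\|v_h\|_0\le r_0^{-1/2}\|v_h\|_{SD}$ and the boundedness of $\tau_e^{\sf s}$), and your remark that the $H^{k+1}$ regularity hypotheses are not used in this stability estimate is accurate.
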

\begin{proof}
  We apply Lemma \tref{eq:K-stab}.
  By Lemma \tref{l:ah-stab-est} $A_h$ is invertible and satisfies
  $\| A_h^{-1} \|_{{\cal L}(Y_h^*,Y_h)} \le 2$.
  It is easy to see that there exists $c > 0$ such that
  $\| B_h \|_{{\cal L}(U_h,Y_h^*)} \le c$ and 
  $\| H^{yy}_h \|_{{\cal L}(Y_h,Y_h^*)} \le c$ for all $h$.
  Finally, since $B_h^* (A_h^*)^{-1} H^{yy}_h A_h^{-1} B_h$ is postive
  semi definite, 
  \[
        \langle \widehat{H}_h u_h, u_h \rangle_{U_h^*\times U_h}
      \ge \langle H^{uu}_h u_h, u_h \rangle_{U_h^*\times U_h}
       = \omega \| u_h  \|_0^2,
  \]
  which implies $\| \widehat{H}_h^{-1} \|_{{\cal L}(U_h^*,U_h)} \le \omega^{-1}$.
\end{proof}

Now we turn to the consistency, i.e., we want to find an
estimate for $\| \br_h - \BK_h \BR_h(\bx) \|_h$ in our abstract
error estimate \eref{eq:abs-error}.
Let $\bz_h =  ( \psi_h, w_h, v_h ) \in \CX_h$.
The optimality conditions \eref{eq:model-disc-opt-cond} of the
discretized problem imply that
\begin{equation}   \label{eq:disc-opt-consist00}
     \langle \br_h - \BK_h \BR_h(\bx), \bz_h \rangle_{\CX_h^*, \CX_h}
   = \left(\begin{array}{c}
     a_h^{\sf s}(\psi_h,\lambda^I) + \langle y^I - \hat{y}, \psi_h \rangle \\
     b_h^{\sf s}(w_h,\lambda^I) + \omega \langle P u, w_h \rangle \\
     a_h^{\sf s}(y^I,v_h) + b_h^{\sf s}(P u,v_h)
                       - \langle f, v_h \rangle^{\sf s}_{h}
                       - \langle g, v_h \rangle_{\Gamma_n}
         \end{array} \right) .
\end{equation}
Since the solution $\bx = ( y,  u, \lambda )$
of \eref{eq:model-ocp} satisfies \eref{eq:model-opt-cond}
and since $y$ satisfies \eref{eq:intro-state}
on each $T_e \in \CT_h$, we find that
\begin{eqnarray*}
       a(\psi_h,\lambda)
  &=& -  \langle y - \hat{y}, \psi_h \rangle, \\
     b(w_h,\lambda) + \omega \langle u, w_h \rangle
  &=&  0, \\
    a_h^{\sf s}(y,v_h) + b_h^{\sf s}(u,v_h)
  &=&  \langle f, v_h \rangle^{\sf s}_{h} + \langle g, v_h \rangle_{\Gamma_n}
\end{eqnarray*}
for all $\psi_h, v_h \in V_h$ and $w_h \in U_h$.
With \eref{eq:Proj-def} this implies
\begin{equation}   \label{eq:disc-opt-consist0}
     \langle \br_h - \BK_h \BR_h(\bx), \bz_h \rangle_{\CX_h^*, \CX_h}
   = \left(\begin{array}{c}
     \sum_{T_e \in \CT_h} \tau_e^{\sf s}
          \langle - \epsilon \Delta \psi_h + \bc \cdot \nabla \psi_h + r \psi_h,
                 \bc \cdot \nabla \lambda^I \rangle_{0,T_e}
             + \langle y^I - y, \psi_h \rangle \\[1ex]
     \langle \lambda - \lambda^I, v_h \rangle
     - \sum_{T_e \in \CT_h} \tau_e^{\sf s}
                  \langle w_h, \bc \cdot \nabla \lambda^I \rangle_{0,T_e} \\[1ex]
     a_h^{\sf s}(y^I-y,v_h)
     - \sum_{T_e \in \CT_h} \tau_e^{\sf s}
                  \langle Pu -u, \bc \cdot \nabla v_h \rangle_{0,T_e}
         \end{array} \right).
\end{equation}

\begin{lemma}  \label{l:disc-opt-consist}
  Let $k, m \ge 1$ and suppose
  the solution $\bx =  ( y, u, \lambda )$ of \eref{eq:model-ocp}
  satisfies $y, \lambda \in H^{k+1}(\Omega)$ and $u \in H^{m+1}(\Omega)$.
  If $\tau_e^{\sf s}$ satisfies \eref{eq:tau-ineq} and \eref{eq:tau-def},
  then
  \begin{equation}   \label{eq:disc-opt-consist-1}
        \| \br_h - \BK_h \BR_h(\bx) \|_{\CX'}
   \le  C \left\{ \begin{array}{ll}
         ( \epsilon^{1/2} + h^{1/2} ) h^k  | y |_{k+1}
           + h^{m+2}\epsilon^{-1/2}  | u |_{m+1}             \\
           + h \epsilon^{-1/2} \| \nabla \lambda^I \|_0
           + h^{k+1}  ( | y |_{k+1} + | \lambda|_{k+1} ),& \Pe_e \le 1, \\[2ex]
         ( \epsilon^{1/2} + h^{1/2} ) h^k  | y |_{k+1}
           + h^{m+3/2}  | u |_{m+1}                          \\
           + ( \epsilon^{1/2} + h^{1/2} ) \| \nabla \lambda^I \|_0
           + h^{k+1}  ( | y |_{k+1} + | \lambda|_{k+1} ), & \Pe_e > 1.
        \end{array} \right. 
  \end{equation}
\end{lemma}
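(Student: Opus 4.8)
The plan is to estimate the dual norm $\| \br_h - \BK_h \BR_h(\bx) \|_{\CX'}$ — the consistency quantity \eref{eq:abs-consist} needed for the abstract error bound \eref{eq:abs-error} — directly from the explicit representation \eref{eq:disc-opt-consist0}. Since $\| \bz_h \|_h = \| \psi_h \|_{SD} + \| w_h \|_0 + \| v_h \|_{SD}$ for $\bz_h = (\psi_h, w_h, v_h) \in \CX_h$, it suffices to bound the three scalar terms on the right of \eref{eq:disc-opt-consist0} — the adjoint residual paired with $\psi_h$, the gradient residual paired with $w_h$, and the state residual paired with $v_h$ — by the asserted bound times $\| \psi_h \|_{SD}$, $\| w_h \|_0$, and $\| v_h \|_{SD}$ respectively; dividing by $\| \bz_h \|_h$ and taking the supremum over $\bz_h \neq 0$ then gives \eref{eq:disc-opt-consist-1}. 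The organizing principle is that any quantity weighted by $(\tau_e^{\sf s})^{1/2}$ in the streamline direction is absorbed into $\| \cdot \|_{SD}$, while the unweighted quantities $\| w_h \|_0$, $\| \psi_h \|_0$, and $|\psi_h|_1$ (the latter produced by an inverse estimate applied to $\Delta \psi_h$) must be combined with the explicit form \eref{eq:tau-def} of $\tau_e^{\sf s}$ and the P\'eclet case distinction $\Pe_e \lessgtr 1$.

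\emph{State and gradient residuals.} In the state residual, $a_h^{\sf s}(y^I - y, v_h)$ is precisely the object estimated in Lemma~\tref{eq:est}: combining \eref{eq:est-1}--\eref{eq:est-3} with \eref{eq:tau-def}, exactly as in the derivation of \eref{eq:supg-err-state0} (cf.\ \eref{eq:supg-err-state00}), yields $|a_h^{\sf s}(y^I - y, v_h)| \le C h^k(\epsilon^{1/2}+h^{1/2}) |y|_{k+1} \| v_h \|_{SD}$. The term $\sum_{T_e} \tau_e^{\sf s} \langle Pu - u, \bc \cdot \nabla v_h \rangle_{T_e}$ is handled by Cauchy--Schwarz over the elements, bounding one factor by $\| v_h \|_{SD}$ and the other by $(\max_{T_e} \tau_e^{\sf s})^{1/2} \| Pu - u \|_0$, and then using \eref{eq:Pu-est} together with $(\max_{T_e} \tau_e^{\sf s})^{1/2} \le C h \epsilon^{-1/2}$ (if $\Pe_e \le 1$) resp.\ $\le C h^{1/2}$ (if $\Pe_e > 1$) to get the $h^{m+2}\epsilon^{-1/2}|u|_{m+1}$ resp.\ $h^{m+3/2}|u|_{m+1}$ contributions. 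Similarly, in the gradient residual $|\langle \lambda - \lambda^I, w_h \rangle| \le C h^{k+1} |\lambda|_{k+1} \| w_h \|_0$ by \eref{eq:int-error} (recall $\ell = k$ here), and $\sum_{T_e} \tau_e^{\sf s} \langle w_h, \bc \cdot \nabla \lambda^I \rangle_{T_e}$ is bounded by $\| w_h \|_0 \big( \sum_{T_e} (\tau_e^{\sf s})^2 \| \bc \cdot \nabla \lambda^I \|_{0,T_e}^2 \big)^{1/2} \le C (\max_{T_e} \tau_e^{\sf s}) \| \nabla \lambda^I \|_0 \| w_h \|_0$; since $\tau_e^{\sf s} \le C h_e$ in both P\'eclet regimes this is dominated by the coefficients of $\| \nabla \lambda^I \|_0$ already present in \eref{eq:disc-opt-consist-1}.

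\emph{Adjoint residual.} Here $|\langle y^I - y, \psi_h \rangle| \le C h^{k+1} |y|_{k+1} \| \psi_h \|_{SD}$ follows from \eref{eq:int-error} and $\| \psi_h \|_0 \le r_0^{-1/2} \| \psi_h \|_{SD}$. The streamline term is split according to $-\epsilon \Delta \psi_h + \bc \cdot \nabla \psi_h + r \psi_h$. For the convection piece, Cauchy--Schwarz gives $\big( \sum_{T_e} \tau_e^{\sf s} \| \bc \cdot \nabla \psi_h \|_{0,T_e}^2 \big)^{1/2} \big( \sum_{T_e} \tau_e^{\sf s} \| \bc \cdot \nabla \lambda^I \|_{0,T_e}^2 \big)^{1/2} \le \| \psi_h \|_{SD} \| \bc \|_{0,\infty} (\max_{T_e} \tau_e^{\sf s})^{1/2} \| \nabla \lambda^I \|_0$; the reaction piece is the same after using $\tau_e^{\sf s} \| r \|_{0,\infty,T_e} \le r_0$ from \eref{eq:tau-ineq} to convert one weight into $\| \psi_h \|_0 \le r_0^{-1/2}\| \psi_h \|_{SD}$; and for the diffusion piece one first applies the inverse inequality \eref{eq:inv-est}, $\| \Delta \psi_h \|_{0,T_e} \le \mu_{\rm inv} h_e^{-1} |\psi_h|_{1,T_e}$, together with $|\psi_h|_1 \le \epsilon^{-1/2} \| \psi_h \|_{SD}$, which leaves the scalar weight $\epsilon \max_{T_e}(\tau_e^{\sf s} h_e^{-1})$. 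In each of the three pieces, inserting \eref{eq:tau-def} and distinguishing the two P\'eclet cases produces the coefficient $C h \epsilon^{-1/2}$ (if $\Pe_e \le 1$) resp.\ $C(\epsilon^{1/2}+h^{1/2})$ (if $\Pe_e > 1$) multiplying $\| \nabla \lambda^I \|_0 \| \psi_h \|_{SD}$.

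Collecting the bounds for the three scalar terms and summing yields \eref{eq:disc-opt-consist-1}. Every step is a routine application of Cauchy--Schwarz and of the interpolation and inverse estimates already recorded in the excerpt; the one place that requires genuine care is the diffusion term of the adjoint residual, where the inverse inequality, the $\epsilon^{1/2}$--weighting built into $\| \cdot \|_{SD}$, and the explicit definition of $\tau_e^{\sf s}$ must be combined so that the powers of $h$ and $\epsilon$ land exactly on the claimed rate $h\epsilon^{-1/2}$ (resp.\ $\epsilon^{1/2}+h^{1/2}$) in each P\'eclet regime. More generally, the only subtle point is the bookkeeping of which factors carry a natural $(\tau_e^{\sf s})^{1/2}$ — and are therefore absorbed by $\| \cdot \|_{SD}$ — rather than having to be estimated against \eref{eq:tau-def}.
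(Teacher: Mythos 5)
Your proposal follows essentially the same route as the paper's proof: it starts from the representation \eref{eq:disc-opt-consist0}, bounds the state residual via Lemma~\tref{eq:est} and the projection estimate \eref{eq:Pu-est}, splits the adjoint stabilization term into diffusion, convection and reaction pieces (treating the diffusion piece with the inverse inequality \eref{eq:inv-est} exactly as in \eref{eq:disc-opt-consist-est1}), handles the gradient residual by interpolation plus the $\tau_e^{\sf s}$-weighted Cauchy--Schwarz bound, and finishes with the P\'eclet case distinction from \eref{eq:tau-def} and the dual-norm identity \eref{eq:norm-r-Kx}. The only deviations (e.g.\ invoking \eref{eq:tau-ineq} for the reaction piece instead of lumping $(\tau_e^{\sf s})^2$ at the end) are cosmetic and yield the same rates, so the argument is correct.
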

\begin{proof}
The terms in \eref{eq:disc-opt-consist0} can be estimated as follows.
Using the H\"older inequality and \eref{eq:inv-est} gives
\begin{eqnarray} \label{eq:disc-opt-consist-est1}
   \left| \sum_{T_e \in \CT_h} \tau_e^{\sf s}
    \langle - \epsilon \Delta \psi_h,\bc \cdot \nabla \lambda^I \rangle_{0,T_e}
   \right|
 &\le& \sum_{T_e \in \CT_h} \tau_e^{\sf s} \epsilon
                          \| \Delta \psi_h \|_{0,T_e}
                          \| \bc \|_{0,\infty,T_e}
                          \| \nabla \lambda^I \|_{0,T_e}  \nonumber \\
 &\le& C \left( \sum_{T_e \in \CT_h} \epsilon (\tau_e^{\sf s})^2/h_e^2
                                      \| \nabla \lambda^I \|_{0,T_e}^2
         \right)^{1/2}
         \left( \sum_{T_e \in \CT_h} \epsilon \| \nabla \psi_h \|_{0,T_e}^2
         \right)^{1/2} .
\end{eqnarray}
Standard estimates give
\begin{eqnarray}   \label{eq:disc-opt-consist-est3}
   \left|  \sum_{T_e \in \CT_h} \tau_e^{\sf s}
    \langle \bc \cdot \nabla \psi_h,\bc \cdot \nabla \lambda^I \rangle_{0,T_e}
   \right|
 &\le& \sum_{T_e \in \CT_h} \tau_e^{\sf s}
                          \| \bc \cdot \nabla \psi_h \|_{0,T_e}
                          \| \bc \|_{0,\infty,T_e}
                          \| \nabla \lambda^I \|_{0,T_e}  \nonumber \\
 &\le& C \left( \sum_{T_e \in \CT_h}
                \tau_e^{\sf s} \|   \nabla \lambda^I  \|_{0,T_e}^2
         \right)^{1/2}
       \left( \sum_{T_e \in \CT_h}
                \tau_e^{\sf s} \|  \bc \cdot \nabla \psi_h  \|_{0,T_e}^2
         \right)^{1/2}
\end{eqnarray}
and
\begin{eqnarray}    \label{eq:disc-opt-consist-est4}
   \left| \sum_{T_e \in \CT_h} \tau_e^{\sf s}
    \langle r \psi_h,\bc \cdot \nabla \lambda^I \rangle_{0,T_e} \right|
 &\le& \sum_{T_e \in \CT_h} \tau_e^{\sf s}
                          \| r \|_{0,\infty,T_e}
                          \| \psi_h \|_{0,T_e}
                          \| \bc \|_{0,\infty,T_e}
                          \| \nabla \lambda^I \|_{0,T_e}  \nonumber  \\
 &\le& C \left( \sum_{T_e \in \CT_h}
                (\tau_e^{\sf s})^2 \|   \nabla \lambda^I  \|_{0,T_e}^2
         \right)^{1/2}  \| \psi_h  \|_{0} .
\end{eqnarray}
The estimate \eref{eq:int-error} implies
\begin{eqnarray}    \label{eq:disc-opt-consist-est5}
   \big| \langle y^I - y, \psi_h \rangle \big|
 &\le& \mu_{\rm int} h^{k+1} |y|_{k+1} \| \psi_h \|_0 .
\end{eqnarray}
Combining \eref{eq:disc-opt-consist-est1}-\eref{eq:disc-opt-consist-est5}
gives
\begin{eqnarray}   \label{eq:adj-est-otd-1}
 \lefteqn{ \left| \sum_{T_e \in \CT_h} \tau_e^{\sf s}
          \langle - \epsilon \Delta \psi_h + \bc \cdot \nabla \psi_h + r \psi_h,
                    \bc \cdot \nabla \lambda^I \rangle_{0,T_e}
        + \langle y^I - y, \psi_h \rangle \right| } \nonumber \\
 &\le& C \left[ \Big( \sum_{T_e \in \CT_h}
                   (\epsilon (\tau_e^{\sf s})^2/h_e^2
                    + \tau_e^{\sf s} + (\tau_e^{\sf s})^2 )
                   \| \nabla \lambda^I \|_{0,T_e}^2
                \Big)^{1/2}
           + h^{k+1} |y|_{k+1} \right] \;   \| \psi_h  \|_{SD} .
\end{eqnarray}

Analogously to \eref{eq:disc-opt-consist-est4},
\eref{eq:disc-opt-consist-est5} we obtain
\begin{equation}  \label{eq:grad-est-otd}
    \left| \sum_{T_e \in \CT_h} \tau_e^{\sf s}
               \langle w_h, \bc \cdot \nabla \lambda^I \rangle_{0,T_e} \right|
  + \Big| \langle \lambda^I - \lambda, w_h \rangle \Big|
 \le C \left[ 
       \Big( \sum_{T_e \in \CT_h}
                   (\tau_e^{\sf s})^2  \| \nabla \lambda^I \|_{0,T_e}^2
       \Big)^{1/2}
       + h^{k+1} |\lambda|_{k+1} \right] \;
      \| w_h  \|_{0} 
\end{equation}

Using the esimates in Lemma~\tref{eq:est} we find that
\begin{equation}  \label{eq:state-est-otd1}
  | a_h^{\sf s}(y^I - y,v_h) |
  \le C h^k \left( \sum_{T_e \in \CT_h}
                    ( \epsilon + \tau_e^{\sf s}
                       + h_e^2/\tau_e^{\sf s} + h_e^2 )
                      | y |_{k+1,T_e}^2 \right)^{1/2}   \| v_h \|_{SD}.
\end{equation}
Finally, using standard estimates and \eref{eq:Pu-est} we obtain
\begin{eqnarray} \label{eq:state-est-otd2}
  \left| \sum_{T_e \in \CT_h} \tau_e^{\sf s}
                  \langle P u - u, \bc \cdot \nabla v_h \rangle_{0,T_e} \right|
 &\le& \left( \sum_{T_e \in \CT_h} 
             \tau_e^{\sf s} \|  P u - u \|^2_{0,T_e} \right)^{1/2}
        \left( \sum_{T_e \in \CT_h} 
             \tau_e^{\sf s} \| \bc \cdot \nabla v_h \|^2_{0,T_e} \right)^{1/2}
                     \nonumber \\
 &\le& C (\max_{T_e \in \CT_h} \tau_e^{\sf s})^{1/2} h^{m+1} | u |_{m+1} 
       \| v_h \|_{SD}.
\end{eqnarray}

The desired results now follows from \eref{eq:disc-opt-consist0},
the estimates \eref{eq:adj-est-otd-1}-\eref{eq:state-est-otd2},
the fact that 
\[
    \epsilon + \tau_e^{\sf s} + h_e^2/\tau_e^{\sf s} + h_e^2
  \le C ( \epsilon + h_e ) ,   \qquad
    \epsilon (\tau_e^{\sf s})^2/h_e^2 + \tau_e^{\sf s}
    + (\tau_e^{\sf s})^2
\le C \left\{ \begin{array}{ll}
       h_e^2/\epsilon , & \Pe_e \le 1, \\
       \epsilon + h_e,          & \Pe_e > 1,
       \end{array} \right.
\]
for $\tau_e^{\sf s}$ satisfying \eref{eq:tau-def}, and
\begin{equation}  \label{eq:norm-r-Kx}
   \| \br_h - \BK_h \BR_h(\bx) \|_{\CX_h^*}
 = \sup_{\bz_h \not= 0}
     \frac{\langle \br_h - \BK_h \BR_h(\bx), \bz_h \rangle_{\CX_h^*, \CX_h}}
          {\| \bz_h \|_{\CX_h}}.
\end{equation}
\end{proof}

If we apply the abstract error estimate \eref{eq:abs-error}
and combine the results in Lemmas \tref{l:x-Rx}, \tref{l:disc-opt-stab}
and \tref{l:disc-opt-consist} we obtain the following error estimate.
\begin{theorem}  \label{th:disc-opt-err}
  Let $k, m \ge 1$ and suppose the solution 
  $( y, u, \lambda )$ of \eref{eq:model-ocp}
  satisfies $y, \lambda \in H^{k+1}(\Omega)$, $u \in H^{m+1}(\Omega)$.
  If $\tau_e^{\sf s}$ satisfies \eref{eq:tau-ineq} and \eref{eq:tau-def}
  and if $\tau_e^{\sf a}$ satisfies \eref{eq:tau-ineq-a} and \eref{eq:tau-def},
  then the error between 
  the solution $( y, u, \lambda )$ of \eref{eq:model-ocp} and
  the solution $( y_h, u_h, \lambda_h )$ of the discretized problem
  \eref{eq:model-ocp-disc} obeys
  \begin{eqnarray}   \label{eq:disc-opt-err}
   \lefteqn{ \| y_h - y \|_{SD} + \| u_h - u \|_0 
             + \| \lambda_h - \lambda \|_{SD} }     \nonumber \\[2ex]
  &\le& C \left\{ \begin{array}{ll}
         ( \epsilon^{1/2} + h^{1/2} ) h^k  | y |_{k+1}
           + h^{m+2}\epsilon^{-1/2}  | u |_{m+1}             \\
           + h \epsilon^{-1/2} \| \nabla \lambda^I \|_0
           + h^{k+1}  ( | y |_{k+1} + | \lambda|_{k+1} ),& \Pe_e \le 1, \\[2ex]
         ( \epsilon^{1/2} + h^{1/2} ) h^k  | y |_{k+1}
           + h^{m+3/2}  | u |_{m+1}                          \\
           + ( \epsilon^{1/2} + h^{1/2} ) \| \nabla \lambda^I \|_0
           + h^{k+1}  ( | y |_{k+1} + | \lambda|_{k+1} ), & \Pe_e > 1.
        \end{array} \right. 
  \end{eqnarray}
\end{theorem}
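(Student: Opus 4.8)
The plan is to obtain \eref{eq:disc-opt-err} directly from the abstract framework of Section~\tref{sec:absform}, using as building blocks the three lemmas already established for the discretize-then-optimize setting. Note first that, with the norms defined above, the left-hand side of \eref{eq:disc-opt-err} is exactly $\|\bx_h-\bx\|_h$, where $\bx_h=(y_h,u_h,\lambda_h)$ solves $\BK_h\bx_h=\br_h$ with $\BK_h$ given componentwise by \eref{eq:Kh-def-disc-opt}; in this approach $\ell=k$ and $\BR_h(\bx)=(y^I,Pu,\lambda^I)$. (The hypothesis on $\tau_e^{\sf a}$ in the statement is inactive here, since the system \eref{eq:model-disc-opt-cond} involves only $\tau_e^{\sf s}$.) Since $\|\cdot\|_{SD}$ and $\|\cdot\|_0$ are genuine norms on $Y$, $U$, $\Lambda=V$, the norm $\|\cdot\|_h$ extends to $\CX$, and $\BK_h$ is invertible by Lemma~\tref{l:disc-opt-stab}, so the abstract estimate \eref{eq:abs-error} applies and gives
\[
   \|\bx_h-\bx\|_h \le \|\BK_h^{-1}\|_h\,\|\br_h-\BK_h\BR_h(\bx)\|_h + \|\BR_h(\bx)-\bx\|_h .
\]

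I would then bound the three quantities on the right. For $\|\BK_h^{-1}\|_h$, Lemma~\tref{l:disc-opt-stab} yields, under \eref{eq:tau-ineq} and \eref{eq:tau-def}, a constant $\kappa$ independent of $h$ with $\|\BK_h^{-1}\|_h\le\kappa$, i.e.\ \eref{eq:abs-stab}. For the residual $\|\br_h-\BK_h\BR_h(\bx)\|_h$ — the dual-norm expression of \eref{eq:norm-r-Kx} — Lemma~\tref{l:disc-opt-consist} supplies precisely the two-case bound \eref{eq:disc-opt-consist-1}. For the interpolation term, Lemma~\tref{l:x-Rx} gives
\[
   \|\BR_h(\bx)-\bx\|_h = \|y-y^I\|_{SD} + \|u-Pu\|_0 + \|\lambda-\lambda^I\|_{SD}
   \le Ch^k(\epsilon^{1/2}+h^{1/2})(|y|_{k+1}+|\lambda|_{k+1}) + Ch^{m+1}|u|_{m+1}.
\]

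It then remains to substitute these into the displayed inequality and simplify to the form of the right-hand side of \eref{eq:disc-opt-err}. The $\kappa$-multiple of the consistency bound already has that form, so the only point needing care is that the interpolation contribution is absorbed into it (up to enlarging $C$): the $\|y-y^I\|_{SD}$ piece matches the $(\epsilon^{1/2}+h^{1/2})h^k|y|_{k+1}$ term verbatim, while the $\lambda$- and $u$-pieces are compared, separately in the regimes $\Pe_e\le1$ and $\Pe_e>1$, with the corresponding terms of \eref{eq:disc-opt-consist-1}, using $\|\nabla\lambda^I\|_0\le C|\lambda|_1$, the interpolation estimate \eref{eq:int-error}, and the elementary relations for $\tau_e^{\sf s}$ recorded in the proof of Lemma~\tref{l:disc-opt-consist} (notably $\epsilon+\tau_e^{\sf s}+h_e^2/\tau_e^{\sf s}+h_e^2\le C(\epsilon+h_e)$). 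This final bookkeeping is the only obstacle, and it is routine; the substantive work — the block factorization giving stability and the term-by-term residual estimate — was already carried out in Lemmas~\tref{l:disc-opt-stab} and \tref{l:disc-opt-consist}.
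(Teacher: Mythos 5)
Your proposal is correct and is essentially the paper's own proof: the paper establishes Theorem~\tref{th:disc-opt-err} precisely by applying the abstract estimate \eref{eq:abs-error} and combining Lemmas~\tref{l:x-Rx}, \tref{l:disc-opt-stab} and \tref{l:disc-opt-consist}, exactly as you do (including absorbing the interpolation terms into the consistency-type bound by enlarging $C$). Your side remark that the hypothesis on $\tau_e^{\sf a}$ plays no role in the discretize-then-optimize system \eref{eq:model-disc-opt-cond} is also accurate.
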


Theorem~\tref{th:disc-opt-err} gives an estimate for the 
states, controls and adjoints combined. Our numerical results in 
Section~\tref{sec:numer} show that this error estimate
is often too conservative for the states and the controls.
The reason for this is that while the error 
$\lambda - \lambda_h$ in the $\| \cdot \|_{SD}$ norm
behaves as in \eref{eq:disc-opt-err}, the error
$\lambda - \lambda_h$ measured in the $L^2$-norm is often much 
smaller. 
Because of the optimality conditions \eref{eq:intro-grad}
and \eref{eq:model-disc-grad} this tends to imply a smaller
error $\| u - u_h \|_0$ in the control than the one suggested 
by \eref{eq:disc-opt-err}. $L^2$ and $L^\infty$ error estimates for the SUPG 
method are discussed, e.g., in \cite{GZhou_1997a,GZhou_RRannacher_1996a}. 
See also the overview in \cite[Sec.~3.2.1]{HGRoos_MStynes_LTobiska_1996}.
However, $L^2$ estimates for the error $\lambda - \lambda_h$
in the optimal control context and $L^2$ estimates for the error 
$u - u_h$ have not yet been established. 

If the error $u - u_h$ in the control is smaller than 
the upper bound established in \eref{eq:disc-opt-err},
we can obtain an improved estimate for the error in the
states. This is stated in the next theorem.

\begin{theorem}  \label{th:disc-opt-state}
  Let $k \ge 1$ and suppose the solution $y$ of \eref{eq:model-state-weak}
  satisfies $y \in H^{k+1}(\Omega)$.
  Furthermore, let $y_h$ solve \eref{eq:model-disc-state}.
  If $\tau_e^{\sf s}$ satisfies \eref{eq:tau-ineq} and \eref{eq:tau-def},
  then there exists $C>0$ such that
  \begin{equation}   \label{eq:disc-opt-state-err}
        \| y - y_h \|_{SD}
    \le  C \Big( h^k ( \epsilon^{1/2} +  h^{1/2}) | y |_{k+1}
                 + \| u_h-u \|_0 \Big)   \qquad \forall h.
  \end{equation}
\end{theorem}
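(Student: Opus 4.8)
The plan is to reduce this estimate to the standard SUPG error analysis of Section~\tref{sec:supg-state-error}, treating the discrepancy $u_h - u$ between the discrete and continuous controls as a right-hand side perturbation. First I would use that the discretized state equation \eref{eq:model-disc-state} is strongly consistent: since $y \in H^{k+1}(\Omega) \subset H^2(\Omega)$, the exact state satisfies $a_h^{\sf s}(y,v_h) + b_h^{\sf s}(u,v_h) = \langle f,v_h\rangle_h^{\sf s} + \langle g,v_h\rangle_{\Gamma_n}$ for all $v_h \in V_h$. Subtracting this from \eref{eq:model-disc-state} and using linearity of $b_h^{\sf s}$ in its first argument gives the perturbed Galerkin identity
\[
   a_h^{\sf s}(y - y_h, v_h) = b_h^{\sf s}(u_h - u, v_h) \qquad \forall v_h \in V_h .
\]
Writing $\xi_h = y^I - y_h \in V_h$, where $y^I$ is the $Y_h$-interpolant of $y$, I would then obtain $a_h^{\sf s}(\xi_h,\xi_h) = a_h^{\sf s}(y^I - y,\xi_h) + b_h^{\sf s}(u_h - u,\xi_h)$.

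The three quantities in this identity are estimated as follows. Lemma~\tref{l:ah-stab-est} gives coercivity, $a_h^{\sf s}(\xi_h,\xi_h) \ge \half\|\xi_h\|_{SD}^2$, under \eref{eq:tau-ineq}. The interpolation term is bounded exactly as in the derivation of \eref{eq:supg-err-state00}: combining \eref{eq:est-1}--\eref{eq:est-3} of Lemma~\tref{eq:est} with the choice \eref{eq:tau-def} of $\tau_e^{\sf s}$, so that $\epsilon + \tau_e^{\sf s} + h_e^2/\tau_e^{\sf s} + h_e^2 \le C(\epsilon + h_e)$, yields $|a_h^{\sf s}(y^I - y,\xi_h)| \le C h^k(\epsilon^{1/2} + h^{1/2})|y|_{k+1}\,\|\xi_h\|_{SD}$. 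For the control term I would split $b_h^{\sf s}(u_h - u,\xi_h) = -\langle u_h - u,\xi_h\rangle - \sum_{T_e \in \CT_h}\tau_e^{\sf s}\langle u_h - u,\bc\cdot\nabla\xi_h\rangle_{T_e}$; the first piece is bounded by $r_0^{-1/2}\|u_h - u\|_0\,\|\xi_h\|_{SD}$ using the $r_0\|\cdot\|_0^2$ term in $\|\cdot\|_{SD}$, and an elementwise Cauchy--Schwarz split bounds the second by $(\max_{T_e}\tau_e^{\sf s})^{1/2}\|u_h - u\|_0\,\|\xi_h\|_{SD}$.

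The only mildly delicate point is that the constant multiplying $\|u_h - u\|_0$ must be independent of $h$: with $\tau_e^{\sf s}$ given by \eref{eq:tau-def}, $\max_{T_e}\tau_e^{\sf s} \le C$, because $\Pe_e \le 1$ forces $h_e \le 2\epsilon/\|\bc\|_{0,\infty}$ and hence $\tau_1 h_e^2/\epsilon \le 2\tau_1 h_e/\|\bc\|_{0,\infty}$, while for $\Pe_e > 1$ one has $\tau_2 h_e$ directly. (If $r_0 = 0$, as permitted by \eref{eq:model-cr-c} when $\Gamma_d \ne \emptyset$, one instead absorbs $\|\xi_h\|_0$ into $\|\xi_h\|_{SD}$ via a Poincar\'e inequality on $V$.) Dividing the resulting inequality by $\|\xi_h\|_{SD}$ gives $\|\xi_h\|_{SD} \le C\big(h^k(\epsilon^{1/2} + h^{1/2})|y|_{k+1} + \|u_h - u\|_0\big)$, and then the triangle inequality together with the interpolation estimate $\|y - y^I\|_{SD} \le C h^k(\epsilon^{1/2} + h^{1/2})|y|_{k+1}$ from Lemma~\tref{l:x-Rx} produces \eref{eq:disc-opt-state-err}. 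I expect the bookkeeping on the $b_h^{\sf s}$ term --- confirming that its stabilization part is controlled by $\|u_h - u\|_0\,\|\xi_h\|_{SD}$ with an $h$-independent constant --- to be the only step beyond a verbatim reuse of the single-equation SUPG analysis.
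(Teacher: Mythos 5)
Your argument is correct, and it differs from the paper's proof mainly in the choice of error decomposition. The paper introduces an auxiliary discrete state $\widetilde{y}_h \in Y_h$ solving the SUPG state equation with the \emph{exact} control $u$, bounds $\| y - \widetilde{y}_h \|_{SD}$ by citing Theorem~\tref{th:supg-err} as a black box, and then bounds $\| y_h - \widetilde{y}_h \|_{SD}$ by subtracting the two discrete equations, testing with $y_h - \widetilde{y}_h$, and using the coercivity of $a_h^{\sf s}$ together with $-b_h^{\sf s}(u_h-u,\cdot) \lesssim \| u_h - u \|_0 \, \| \cdot \|_{SD}$; the triangle inequality then gives \eref{eq:disc-opt-state-err}. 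You instead exploit the strong consistency of \eref{eq:model-disc-state} at $(y,u)$ to get the perturbed orthogonality relation $a_h^{\sf s}(y-y_h,v_h) = b_h^{\sf s}(u_h-u,v_h)$, split the error through the interpolant $y^I$, and re-run the single-equation SUPG analysis (Lemma~\tref{l:ah-stab-est}, Lemma~\tref{eq:est}, the balancing \eref{eq:tau-def}) with the control perturbation carried along, finishing with Lemma~\tref{l:x-Rx}. Both routes rest on the same two ingredients (coercivity of $a_h^{\sf s}$ and the standard SUPG consistency/interpolation estimates), so neither is more powerful; the paper's version is shorter because it reuses Theorem~\tref{th:supg-err} wholesale, while yours is self-contained at the price of repeating the interpolation estimates. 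A point in your favor is that you track the constant multiplying $\| u_h - u \|_0$ explicitly: the paper's displayed inequality $-b_h^{\sf s}(u_h-u,y_h-\widetilde{y}_h) \le \| u_h-u \|_0 \| y_h-\widetilde{y}_h \|_{SD}$ silently absorbs the factors $r_0^{-1/2}$ and $(\max_{T_e}\tau_e^{\sf s})^{1/2}$ that you bound via the $r_0\|\cdot\|_0^2$ term of the SD-norm and the choice \eref{eq:tau-def}; your parenthetical remark on the $\Pe_e \le 1$ regime (where the bound on $\tau_e^{\sf s}$ involves $\|\bc\|_{0,\infty,T_e}^{-1}$, or data-dependent constants generally) is the only place where the constant picks up a dependence on the problem data, which is consistent with how $C$ is understood throughout the paper.
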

\begin{proof}
  Let $\widetilde{y}_h \in Y_h$ be the solution of
  \begin{equation}  \label{eq:model-state-disc1}
       a_h^{\sf s}(\widetilde{y}_h,v_h) + b_h^{\sf s}(u,v_h)
    = \langle f, v_h \rangle_h^{\sf s} + \langle g, v_h \rangle_{\Gamma_n}
   \quad \forall v_h \in V_h.
  \end{equation}
  Theorem~\tref{th:supg-err} implies
  \[
       \| y - \widetilde{y}_h \|_{SD}
       \le   C h^k ( \epsilon^{1/2} +  h^{1/2}) | y |_{k+1}.
  \]
  To estimate $y_h -\widetilde{y}_h$ we
  subtract \eref{eq:model-state-disc1} from \eref{eq:model-state-disc},
  \[
   a_h^{\sf s}(y_h -\widetilde{y}_h,v_h) + b_h^{\sf s}(u_h-u,v_h)
   = 0 \quad \forall v_h \in V_h,
  \]
  set $v_h = y_h -\widetilde{y}_h$ and apply Lemma~\tref{eq:tau-ineq}
  to obtain
  \[
     \half \| y_h -\widetilde{y}_h \|_{SD}
     \le a_h^{\sf s}(y_h -\widetilde{y}_h,y_h -\widetilde{y}_h) 
      = -b_h^{\sf s}(u_h-u,y_h -\widetilde{y}_h)
     \le \| u_h-u \|_0 \| y_h -\widetilde{y}_h \|_{SD}.
  \]
  This implies the desired estimate. 
\end{proof}

The next result indicates that the estimate \eref{eq:disc-opt-err}
for the error in the adjoints cannot be improved. Even if
we solve the discrete adjoint \eref{eq:model-disc-adj} with $y_h$ 
replaced by the optimal state $y$, i.e., the solution of \eref{eq:model-ocp},
we obtain an error estimate comparable to \eref{eq:disc-opt-err}.

\begin{theorem}  \label{th:disc-opt-adj}
  Let $y$ be the optimal state, i.e., the solution of \eref{eq:model-ocp}.
  Let $k \ge 1$ and suppose the solution $\lambda$ of \eref{eq:model-adj-weak}
  satisfies $\lambda \in H^{k+1}(\Omega)$.
  Furthermore, let $\lambda_h$ solve
  \begin{equation}  \label{eq:model-disc-adj-exy}
     a_h^{\sf s}(\psi_h,\lambda_h)
     = - \langle y - \hat{y}, \psi_h \rangle
        \quad \forall \psi_h \in V_h.
  \end{equation}
  If $\tau_e^{\sf s}$ satisfies \eref{eq:tau-ineq} and \eref{eq:tau-def},
  then
  \begin{equation}   \label{eq:disc-opt-adj-err}
        \| \lambda - \lambda_h \|_{SD}
    \le  C \left\{ \begin{array}{ll}
            h \epsilon^{-1/2} \| \nabla \lambda^I \|_0,& \Pe_e \le 1, \\[2ex]
        ( \epsilon^{1/2} + h^{1/2} ) \| \nabla \lambda^I \|_0, & \Pe_e > 1.
        \end{array} \right.
  \end{equation}
\end{theorem}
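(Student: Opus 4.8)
The plan is to run the standard "stability plus consistency" argument, treating $\lambda_h$ as the solution of a stable discrete problem (Lemma~\tref{l:ah-stab-est}) whose data agrees on $V_h$ with the data satisfied by the exact adjoint, while carefully tracking the stabilization defect that makes the discrete adjoint inconsistent. Write $R_e(v):=-\epsilon\Delta v+\bc\cdot\nabla v+rv$ for the element residual operator, so that $a_h^{\sf s}(\psi_h,\mu)=a(\psi_h,\mu)+\sum_{T_e\in\CT_h}\tau_e^{\sf s}\langle R_e(\psi_h),\bc\cdot\nabla\mu\rangle_{T_e}$ (the definition \eref{eq:model-a-b} extended to general $\mu$ in the second slot). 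Since $\psi_h\in V_h\subset V$, the exact $\lambda$ satisfies $a(\psi_h,\lambda)=-\langle y-\hat y,\psi_h\rangle$, while $\lambda_h$ satisfies $a_h^{\sf s}(\psi_h,\lambda_h)=-\langle y-\hat y,\psi_h\rangle$ by \eref{eq:model-disc-adj-exy}; subtracting gives the error equation
\[
   a_h^{\sf s}(\psi_h,\lambda_h-\lambda)
   = -\sum_{T_e\in\CT_h}\tau_e^{\sf s}\langle R_e(\psi_h),\bc\cdot\nabla\lambda\rangle_{T_e}
   \qquad\forall\,\psi_h\in V_h .
\]

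Next I would set $\phi_h:=\lambda_h-\lambda^I\in V_h$, decompose $\lambda_h-\lambda=\phi_h+(\lambda^I-\lambda)$, and test the error equation with $\psi_h=\phi_h$. Expanding $a_h^{\sf s}(\phi_h,\lambda^I-\lambda)=a(\phi_h,\lambda^I-\lambda)+\sum_{T_e}\tau_e^{\sf s}\langle R_e(\phi_h),\bc\cdot\nabla(\lambda^I-\lambda)\rangle_{T_e}$, the two streamline-diffusion sums combine so that only $\bc\cdot\nabla\lambda^I$ survives, and one is left with
\[
   a_h^{\sf s}(\phi_h,\phi_h)
   = a(\phi_h,\lambda-\lambda^I)
     - \sum_{T_e\in\CT_h}\tau_e^{\sf s}\langle R_e(\phi_h),\bc\cdot\nabla\lambda^I\rangle_{T_e}.
\]
The coercivity estimate $a_h^{\sf s}(\phi_h,\phi_h)\ge\half\|\phi_h\|_{SD}^2$ of Lemma~\tref{l:ah-stab-est} (valid since $\tau_e^{\sf s}$ satisfies \eref{eq:tau-ineq}) then bounds $\|\phi_h\|_{SD}$ by the two terms on the right. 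Note that it is exactly this diagonal coercivity of $a_h^{\sf s}(\cdot,\cdot)$, not a symmetry of the form, that here plays the role of a discrete inf--sup condition, so that $a_h^{\sf s}(\cdot,\lambda_h)$ being "the adjoint equation" causes no difficulty.

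For the stabilization term I would repeat, with $\phi_h$ in place of $\psi_h$, the estimates \eref{eq:disc-opt-consist-est1}, \eref{eq:disc-opt-consist-est3}, \eref{eq:disc-opt-consist-est4} from the proof of Lemma~\tref{l:disc-opt-consist} (H\"older, the inverse inequalities \eref{eq:inv-est}, and Cauchy--Schwarz), which give $\big|\sum_{T_e}\tau_e^{\sf s}\langle R_e(\phi_h),\bc\cdot\nabla\lambda^I\rangle_{T_e}\big|\le C\big(\sum_{T_e}(\epsilon(\tau_e^{\sf s})^2/h_e^2+\tau_e^{\sf s}+(\tau_e^{\sf s})^2)\|\nabla\lambda^I\|_{0,T_e}^2\big)^{1/2}\|\phi_h\|_{SD}$; inserting $\tau_e^{\sf s}$ from \eref{eq:tau-def} and using $\epsilon(\tau_e^{\sf s})^2/h_e^2+\tau_e^{\sf s}+(\tau_e^{\sf s})^2\le Ch_e^2/\epsilon$ for $\Pe_e\le 1$ and $\le C(\epsilon+h_e)$ for $\Pe_e>1$ yields precisely the right-hand side of \eref{eq:disc-opt-adj-err} times $\|\phi_h\|_{SD}$. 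The term $|a(\phi_h,\lambda-\lambda^I)|$ is controlled by continuity of $a$ and the interpolation estimate \eref{eq:int-error} (as in Lemma~\tref{l:x-Rx}); it, together with the term $\|\lambda-\lambda^I\|_{SD}$ produced by the final triangle inequality $\|\lambda-\lambda_h\|_{SD}\le\|\phi_h\|_{SD}+\|\lambda-\lambda^I\|_{SD}$, is $O(h^k(\epsilon^{1/2}+h^{1/2})|\lambda|_{k+1})$, which is of the same or higher order in $h$ as the displayed bound and is therefore absorbed into the constant. Dividing by $\|\phi_h\|_{SD}$ and adding $\|\lambda-\lambda^I\|_{SD}$ completes the argument.

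The only genuinely delicate point is the bookkeeping in the second step: one must notice that the defect $\sum_{T_e}\tau_e^{\sf s}\langle R_e(\phi_h),\bc\cdot\nabla\lambda\rangle_{T_e}$ in the error equation and the sum $\sum_{T_e}\tau_e^{\sf s}\langle R_e(\phi_h),\bc\cdot\nabla(\lambda^I-\lambda)\rangle_{T_e}$ coming from $a_h^{\sf s}(\phi_h,\lambda^I-\lambda)$ combine to $\sum_{T_e}\tau_e^{\sf s}\langle R_e(\phi_h),\bc\cdot\nabla\lambda^I\rangle_{T_e}$, i.e. the uncontrolled quantity $\bc\cdot\nabla\lambda$ is replaced throughout by $\bc\cdot\nabla\lambda^I$, whose $T_e$-local norm is tamed by an inverse estimate against $\|\phi_h\|_{SD}$. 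Everything else reuses machinery already assembled in Section~\tref{sec:error}, and no regularity beyond $\lambda\in H^{k+1}(\Omega)$ is needed.
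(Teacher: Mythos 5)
Your proof is correct and follows essentially the same route as the paper's own (sketched) proof: the error equation for $\phi_h=\lambda_h-\lambda^I$, the coercivity \eref{eq:ah-stable}, and the consistency estimates \eref{eq:disc-opt-consist-est1}--\eref{eq:disc-opt-consist-est4} together with the $\tau_e^{\sf s}$-bounds from \eref{eq:tau-def} are exactly the ingredients the paper invokes, with the remaining steps ``analogous to Theorem~\tref{th:supg-err}.'' The only cosmetic caveat is that absorbing the interpolation contributions of order $h^k(\epsilon^{1/2}+h^{1/2})|\lambda|_{k+1}$ into the constant makes $C$ depend on $\lambda$, but the paper's stated bound \eref{eq:disc-opt-adj-err} implicitly does the same.
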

\begin{proof}
  This result follows from the stability result \eref{eq:ah-stable}
  and the consistency estimates 
  \eref{eq:disc-opt-consist-est1}-\eref{eq:disc-opt-consist-est4}.
  All other steps in the proof of this result are analogous to those
  in the proof of Theorem~\tref{th:supg-err} given, e.g., 
  \cite[Thm.~3.30]{HGRoos_MStynes_LTobiska_1996} or
  \cite[Thm.~9.3]{PKnabner_LAngermann_2000}.
\end{proof}


\subsection{Optimize-Then-Discretize}

In the optimize-then-discretize approach the
discrete equation \eref{eq:Khxh=rh} corresponds to 
\eref{eq:model-opt-cond-disc}. The components of
$\BK_h$ in \eref{eq:Kh-def} are given by
\begin{equation}   \label{eq:Kh-def-opt-disc}
\begin{array}{c}
        \langle H^{yy}_h y_h, \psi_h \rangle_{\Lambda_h^* \times  \Lambda_h}
      = \langle y_h, \psi_h \rangle_h^{\sf a}
      = \langle y_h , \psi_h \rangle
       + \sum_{T_e \in \CT_h} \tau_e^{\sf a}
                   \langle y_h, -\bc \cdot \psi_h \rangle_{T_e},  \\[1ex]
        \langle H^{uu}_h u_h, w_h \rangle_{U_h^* \times U_h}
      = \omega \langle u_h, w_h \rangle, \quad
      H^{uy}_h = H^{yu}_h = 0,      \\[1ex]
      \langle A_h y_h, v_h \rangle_{V_h^* \times V_h}
      = a_h^{\sf s}(y_h, v_h), \quad
      \langle B_h u_h, v_h \rangle_{V_h^*\times V_h}
      = b_h^{\sf s}(u_h, v_h),  \\[1ex]
      \langle \widetilde{A}_h \psi_h, 
                              \lambda_h \rangle_{\Lambda_h^* \times \Lambda_h}
      = a_h^{\sf a}(\psi_h, \lambda_h), \quad
    \langle \widetilde{B}_h u_h, \lambda_h \rangle_{\Lambda_h^*\times \Lambda_h}
      = b_h(u_h, \lambda_h).
\end{array}
\end{equation}

As we have pointed out at the end of Section \tref{sec:opt-then-disc},
the discretization of the optimality system leads to a non-selfadjoint
system. This makes the derivation of a stability result \eref{eq:abs-stab}
more complicated than in the discetize-then-optimize approach.
On the other hand, derivation of a consistency estimate is just a simple
application of the standard SUPG consistency estimates 
reviewed in Section \tref{sec:supg-state-error}.

We first derive a stability result.
The next  lemma collects some preliminary results on the
solution of the state and adjoint equations as well as
their approximations computed using the SUPG method.

\begin{lemma}  \label{l:stab-est-temp}
  {\rm i.}
  Let $u \in L^2(\Omega)$. Suppose that the solution $z(u) \in V$ of
  \begin{equation}   \label{eq:z-def00}
     a(z, v) = b(u, v)  \quad \forall v \in V
  \end{equation}
  satisfies $z(u) \in H^2(\Omega)$ and that there exists
  $C > 0$ independent of $u$ such that
  \begin{equation}   \label{eq:z-def0}
      \| z(u) \|_2 \le C \| u \|_0.
  \end{equation}
  If $z_h(u_h)$ and $\widetilde{z}_h(u_h)$ solve
  \begin{equation}   \label{eq:z-def1}
     a_h^{\sf s}(z_h, v_h) = b_h^{\sf s}(u, v_h)  \quad \forall v_h \in V_h
  \end{equation}
  and
  \begin{equation}   \label{eq:z-def2}
     a_h^{\sf a}(\widetilde{z}_h, \psi_h) = b(u, \psi_h)  
           \quad \forall \psi_h \in \Lambda_h,
  \end{equation}
  respectively, and 
  if $\tau_e^{\sf s}$ satisfies \eref{eq:tau-ineq} and \eref{eq:tau-def},
  then there exists $C > 0$ independent of $u_h$ such that
  \begin{equation}   \label{eq:z-est1}
       \| z_h(u) \|_{SD} \le C \| u \|_0, \quad
       \| \widetilde{z}_h(u_h) \|_{SD} \le C \| u \|_0,
  \end{equation}
  \begin{equation}   \label{eq:z-est2}
       \| z_h(u) - z(u) \|_{SD} 
       \le C h ( \epsilon^{1/2} +  h^{1/2}) \| u \|_0.
  \end{equation}

  {\rm ii.}
  Let $z \in L^2(\Omega)$. Suppose that the solution $\mu(z) \in V$ of
  \begin{equation}   \label{eq:mu-def00}
       a(\psi, \mu)   = \langle z, \psi \rangle   \quad \forall \psi \in V,
  \end{equation}
  satisfies $\mu(z) \in H^2(\Omega)$ and that there exists
  $C > 0$ independent of $z$ such that
  \begin{equation}   \label{eq:mu-def0}
      \| \mu(z) \|_2 \le C \| z \|_0.
  \end{equation}
  If $\mu_h(z) \in \Lambda_h$ solves
  \begin{equation}   \label{eq:mu-def1}
       a_h^{\sf a}(\psi_h, \mu_h) 
       = \langle z, \psi_h \rangle
          +  \sum_{T_e \in \CT_h} \tau_e^{\sf a}
                   \langle z, -\bc \cdot \nabla \psi_h \rangle_{T_e} 
          \quad \forall \psi_h \in \Lambda_h
  \end{equation}
  and if $\tau_e^{\sf a}$ satisfies 
  \begin{equation}   \label{eq:tau-ineq-a}
    0 < \tau_e^{\sf a}
     \le \min \left\{ \frac{ h_e^2 }{ \epsilon \mu_{\rm inv}^2 },
                     \frac{ r_0 }{ \| r - \nabla \cdot \bc\|_{0,\infty,T_e} }
              \right\} 
  \end{equation}
  and \eref{eq:tau-def} with $\tau_e^{\sf s}$ replaced by $\tau_e^{\sf a}$,
  then
  \begin{equation}   \label{eq:mu-est1}
       \| \mu_h(z) - \mu(z) \|_{SD}
    \le C h ( \epsilon^{1/2} +  h^{1/2}) \| z \|_0.
  \end{equation}
\end{lemma}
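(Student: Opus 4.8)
The plan is to identify each of the four discrete problems in the lemma with a SUPG discretization of a linear advection--diffusion equation and then invoke the coercivity and error estimates already recorded for the state discretization, together with the postulated $H^2$-regularity bounds \eref{eq:z-def0} and \eref{eq:mu-def0}. The one preliminary fact needed throughout is an adjoint counterpart of Lemma~\tref{l:ah-stab-est} and Theorem~\tref{th:supg-err}. Observe that the adjoint equation \eref{eq:intro-adj} is itself an advection--diffusion equation, with advection $-\bc$ and reaction $r-\nabla\cdot\bc$, and its coercivity defect $(r-\nabla\cdot\bc)-\frac{1}{2}\nabla\cdot(-\bc)=r-\frac{1}{2}\nabla\cdot\bc\ge r_0$ is exactly \eref{eq:model-cr-b}; the Galerkin part of $a_h^{\sf a}$ is still the original form $a$, so its coercivity is unchanged, and the streamline part $\sum_{T_e}\tau_e^{\sf a}\langle-\epsilon\Delta v_h-\bc\cdot\nabla v_h+(r-\nabla\cdot\bc)v_h,-\bc\cdot\nabla v_h\rangle_{T_e}$ is controlled under \eref{eq:tau-ineq-a} by the same Young/inverse-inequality argument that proves Lemma~\tref{l:ah-stab-est}, with $r$ replaced by $r-\nabla\cdot\bc$. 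Hence $a_h^{\sf a}(v_h,v_h)\ge\frac{1}{2}\|v_h\|_{SD}^2$ for all $v_h\in\Lambda_h$ (the $SD$-norm here carrying $\tau_e^{\sf a}$), and the argument \eref{eq:supg-err-state00}--\eref{eq:supg-err-state}, which uses only this coercivity, Galerkin orthogonality, and the interpolation estimates of Lemma~\tref{eq:est}, carries over verbatim to the SUPG discretization of \eref{eq:intro-adj}. I shall use these adjoint counterparts freely.

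For the stability bounds \eref{eq:z-est1} I would argue directly, so that the constant comes out independent of $\epsilon$ and $h$. Taking $v_h=z_h(u)$ in \eref{eq:z-def1} and using $a_h^{\sf s}(z_h,z_h)\ge\frac{1}{2}\|z_h\|_{SD}^2$ gives $\frac{1}{2}\|z_h\|_{SD}^2\le|b_h^{\sf s}(u,z_h)|$; bounding $|\langle u,z_h\rangle|\le r_0^{-1/2}\|u\|_0\|z_h\|_{SD}$ and estimating $\big|\sum_{T_e\in\CT_h}\tau_e^{\sf s}\langle u,\bc\cdot\nabla z_h\rangle_{T_e}\big|$ by a Cauchy--Schwarz estimate over elements exactly as in \eref{eq:state-est-otd2} (using the uniform boundedness of $\tau_e^{\sf s}$ that follows from \eref{eq:tau-ineq} and \eref{eq:tau-def}) yields $\|z_h(u)\|_{SD}\le C\|u\|_0$. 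The bound on $\widetilde z_h$ is cleaner still: taking $\psi_h=\widetilde z_h(u)$ in \eref{eq:z-def2}, the coercivity of $a_h^{\sf a}$ and $|b(u,\widetilde z_h)|\le r_0^{-1/2}\|u\|_0\|\widetilde z_h\|_{SD}$ give $\|\widetilde z_h(u)\|_{SD}\le 2r_0^{-1/2}\|u\|_0$ directly, since the right-hand side of \eref{eq:z-def2} carries no stabilization term.

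For the convergence estimates \eref{eq:z-est2} and \eref{eq:mu-est1} the crucial step is strong consistency. Since $z(u)\in H^2(\Omega)$ solves \eref{eq:z-def00}, it satisfies the strong form $-\epsilon\Delta z+\bc\cdot\nabla z+rz=-u$ a.e.\ on each $T_e$; substituting this into the definition of $a_h^{\sf s}$ and using $a(z,v_h)=b(u,v_h)$ for $v_h\in V_h$ shows $a_h^{\sf s}(z,v_h)=b_h^{\sf s}(u,v_h)$, so subtracting \eref{eq:z-def1} gives the Galerkin orthogonality $a_h^{\sf s}(z-z_h,v_h)=0$ for all $v_h\in V_h$. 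The estimate \eref{eq:supg-err-state00}--\eref{eq:supg-err-state} with $k=1$ (valid since $z\in H^2$) then yields $\|z-z_h\|_{SD}\le Ch(\epsilon^{1/2}+h^{1/2})|z|_2$, and \eref{eq:z-def0} replaces $|z|_2\le\|z\|_2$ by $C\|u\|_0$, which is \eref{eq:z-est2}. The argument for \eref{eq:mu-est1} is identical in structure: $\mu(z)\in H^2(\Omega)$ solves the adjoint equation in strong form $-\epsilon\Delta\mu-\bc\cdot\nabla\mu+(r-\nabla\cdot\bc)\mu=z$ a.e., so using $a(\psi_h,\mu)=\langle z,\psi_h\rangle$ one gets $a_h^{\sf a}(\psi_h,\mu)=\langle z,\psi_h\rangle+\sum_{T_e\in\CT_h}\tau_e^{\sf a}\langle z,-\bc\cdot\nabla\psi_h\rangle_{T_e}$, which is exactly the right-hand side of \eref{eq:mu-def1} --- this is precisely why the extra stabilization term was placed there --- hence $a_h^{\sf a}(\psi_h,\mu-\mu_h)=0$ for all $\psi_h\in\Lambda_h$. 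Applying the adjoint counterpart of the SUPG error estimate with $k=1$ and $\tau_e^{\sf a}$ satisfying \eref{eq:tau-ineq-a} and \eref{eq:tau-def} gives $\|\mu-\mu_h\|_{SD}\le Ch(\epsilon^{1/2}+h^{1/2})|\mu|_2$, and \eref{eq:mu-def0} turns $|\mu|_2$ into $C\|z\|_0$.

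The step I expect to require the most care is the preliminary transfer of Lemma~\tref{l:ah-stab-est} and the error analysis of Theorem~\tref{th:supg-err} from the state form $a_h^{\sf s}$ to the adjoint form $a_h^{\sf a}$: one must check that reversing the sign of the advection and modifying the reaction term to $r-\nabla\cdot\bc$ affects neither the coercivity constant (the defect $r-\frac{1}{2}\nabla\cdot\bc$ is unchanged, and $\Gamma_n$ still satisfies \eref{eq:model-cr-a1}) nor the interpolation and inverse-inequality estimates of Lemma~\tref{eq:est}; once that is verified, every remaining step is a mechanical repetition of the state-equation arguments. The only other bookkeeping is to keep the stability constants free of $\epsilon$ --- which the direct test-function argument does automatically, since it never multiplies $\|u\|_0$ or $\|z\|_0$ by a power of $\epsilon$ --- and free of $h$, which follows from the uniform bound on $\tau_e^{\sf s}$ and $\tau_e^{\sf a}$ implied by \eref{eq:tau-ineq}, \eref{eq:tau-ineq-a} and \eref{eq:tau-def}.
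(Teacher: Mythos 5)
Your proposal is correct and follows essentially the same route as the paper, whose proof simply cites the SUPG coercivity estimate (Lemma~\ref{l:ah-stab-est}) for \eref{eq:z-est1} and the SUPG convergence theory (Theorem~\ref{th:supg-err}) for \eref{eq:z-est2} and \eref{eq:mu-est1}; you merely spell out the details the paper leaves implicit, in particular the strong-consistency/Galerkin-orthogonality step and the transfer of the coercivity and error analysis to the adjoint form $a_h^{\sf a}$ with advection $-\bc$ and reaction $r-\nabla\cdot\bc$ under \eref{eq:tau-ineq-a}.
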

\begin{proof}
  The inequalities \eref{eq:z-est1} follow from the SUPG stability
  estimate Lemma \tref{l:ah-stab-est},
  inequalities \eref{eq:z-est2}, \eref{eq:mu-est1} follow from the 
  SUPG convergence theory,
  cf. Theorem~\tref{th:supg-err}.
\end{proof}

\begin{lemma}  \label{l:opt-disc-stab}
  Let the assumptions of Lemma~\tref{l:stab-est-temp} be satisfied.
  Let $k, \ell, m \ge 1$ and suppose the solution 
  $\bx =  ( y, u, \lambda )$ of \eref{eq:model-ocp}
  satisfies $y \in H^{k+1}(\Omega)$, $u \in H^{m+1}(\Omega)$,
  $\lambda \in H^{\ell+1}(\Omega)$.
  If $\tau_e^{\sf s}$ satisfies \eref{eq:tau-ineq} and \eref{eq:tau-def}
  and if $\tau_e^{\sf a}$ satisfies \eref{eq:tau-ineq-a}
  and \eref{eq:tau-def} with $\tau_e^{\sf s}$ replaced by $\tau_e^{\sf a}$,
  then there exist $\bar{h} > 0$ and 
  $\kappa > 0$ such that $\BK_h$ is invertible for all $h \ge \bar{h}$
  and $\| \BK_h^{-1} \|_h \le \kappa$ for all $h \ge \bar{h}$.
\end{lemma}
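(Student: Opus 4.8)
The plan is to verify the hypotheses of the abstract stability Lemma~\tref{eq:K-stab} for the block operator $\BK_h$ whose entries are given in \eref{eq:Kh-def-opt-disc}. Since $H^{yu}_h = H^{uy}_h = 0$, the reduced operator \eref{eq:red-H-def} collapses to $\widehat{H}_h = H^{uu}_h + \widetilde{B}_h^*(\widetilde{A}_h^*)^{-1}H^{yy}_h A_h^{-1}B_h$, where $\langle H^{uu}_h u, w\rangle = \omega\langle u,w\rangle$. It thus remains to (i) show that $A_h$, $\widetilde{A}_h$, $\widetilde{A}_h^*$ are invertible with $h$-uniformly bounded inverses; (ii) show that $B_h$, $\widetilde{B}_h$, $\widetilde{B}_h^*$, $H^{yy}_h$ are $h$-uniformly bounded, so that each composite operator appearing in Lemma~\tref{eq:K-stab} is a product of uniformly bounded operators; and (iii) show that $\widehat{H}_h$ is invertible with $\|\widehat{H}_h^{-1}\|$ bounded uniformly in $h$. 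Parts (i)--(ii) are routine; part (iii) is the crux.

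For (i): Lemma~\tref{l:ah-stab-est} (applicable since $\tau_e^{\sf s}$ obeys \eref{eq:tau-ineq}) gives $a_h^{\sf s}(v_h,v_h)\ge\tfrac12\|v_h\|_{SD}^2$, hence $\|A_h^{-1}\|_{\CL(Y_h^*,Y_h)}\le 2$. The identical argument applies to the adjoint SUPG form $a_h^{\sf a}$: the adjoint equation has advection $-\bc$ and reaction $r-\nabla\cdot\bc$, and \eref{eq:model-cr-b} together with \eref{eq:model-cr-a1} makes its bilinear form $V$-elliptic, so with $\tau_e^{\sf a}$ satisfying \eref{eq:tau-ineq-a} one gets $a_h^{\sf a}(v_h,v_h)\ge\tfrac12\|v_h\|_{SD}^2$, and therefore $\|\widetilde{A}_h^{-1}\|$, $\|(\widetilde{A}_h^*)^{-1}\|\le 2$. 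For (ii): H\"older's inequality, the inverse estimates \eref{eq:inv-est}, and the parameter bounds \eref{eq:tau-ineq}, \eref{eq:tau-ineq-a}, \eref{eq:tau-def} (which imply $\max_e\tau_e^{\sf s}, \max_e\tau_e^{\sf a}\le Ch$, with $C$ allowed to depend on the fixed $\epsilon$) give $|b_h^{\sf s}(u,v_h)|\le C\|u\|_0\|v_h\|_{SD}$, $|b(w,\lambda_h)|\le C\|w\|_0\|\lambda_h\|_{SD}$, and $|\langle y,\psi_h\rangle_h^{\sf a}|\le C\|y\|_{SD}\|\psi_h\|_{SD}$; hence $B_h,\widetilde{B}_h,\widetilde{B}_h^*,H^{yy}_h$ are uniformly bounded and every operator norm in Lemma~\tref{eq:K-stab} except $\|\widehat{H}_h^{-1}\|$ is controlled.

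For (iii) I would view $\widehat{H}_h-\omega I$ as a consistent discretization of the negative-semidefinite part of the continuous reduced Hessian. Unwinding the composition: $A_h^{-1}B_h u = z_h(u)$ is the discrete SUPG state solve \eref{eq:z-def1}; $(\widetilde{A}_h^*)^{-1}H^{yy}_h z_h(u) = \mu_h(z_h(u))$ is the discrete SUPG adjoint solve \eref{eq:mu-def1} with data $z_h(u)$; and $\langle\widetilde{B}_h^*\mu_h(z_h(u)),u\rangle = b(u,\mu_h(z_h(u))) = -\langle u,\mu_h(z_h(u))\rangle$. Thus $\langle\widehat{H}_h u,u\rangle = \omega\|u\|_0^2 - \langle u,\mu_h(z_h(u))\rangle$ for all $u\in U_h$. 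On the continuous side, testing \eref{eq:mu-def00} with data $z(u)$ against $z(u)$, and \eref{eq:z-def00} against $\mu(z(u))$, yields $-\langle u,\mu(z(u))\rangle = \|z(u)\|_0^2\ge 0$ (this is just convexity of the reduced objective). Hence it suffices to estimate $\langle u,\mu(z(u))-\mu_h(z_h(u))\rangle$. Splitting $\mu(z(u))-\mu_h(z_h(u)) = (\mu(z(u))-\mu_h(z(u))) + (\mu_h(z(u))-\mu_h(z_h(u)))$, the first term is $O(h(\epsilon^{1/2}+h^{1/2})\|u\|_0)$ in $\|\cdot\|_{SD}$ by \eref{eq:mu-est1} and the regularity bound \eref{eq:z-def0}; the second term solves a discrete adjoint problem whose data is the stabilized pairing against $z(u)-z_h(u)$, so coercivity of $a_h^{\sf a}$ and the convergence estimate \eref{eq:z-est2} make it $O(h(\epsilon^{1/2}+h^{1/2})\|u\|_0)$ as well. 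Consequently $\langle\widehat{H}_h u,u\rangle\ge(\omega - Ch(\epsilon^{1/2}+h^{1/2}))\|u\|_0^2$, so there is $\bar h>0$ with $\langle\widehat{H}_h u,u\rangle\ge\tfrac{\omega}{2}\|u\|_0^2$ for all $u\in U_h$ whenever $h\le\bar h$; since $U_h$ is finite dimensional this forces $\widehat{H}_h$ to be invertible with $\|\widehat{H}_h^{-1}\|_{\CL(U_h^*,U_h)}\le 2/\omega$. Lemma~\tref{eq:K-stab} then delivers the asserted uniform bound on $\|\BK_h^{-1}\|_h$.

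The main obstacle is step (iii). In the discretize-then-optimize case the analogous reduced operator was positive semidefinite by inspection (it was $\omega I$ plus a Gram-type term), but here the adjoint PDE is discretized by SUPG in its own right rather than as the exact discrete adjoint of the SUPG state scheme, so $\widehat{H}_h$ is neither self-adjoint nor manifestly positive. The device that rescues the argument is precisely the stability-and-convergence package collected in Lemma~\tref{l:stab-est-temp}: it lets one treat $\widehat{H}_h$ as an $O(h^{1/2})$ perturbation of the coercive continuous reduced Hessian, and this perturbation step is exactly why only invertibility on sufficiently fine meshes (below some $\bar h$) can be asserted rather than for all $h$.
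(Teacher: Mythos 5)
Your proposal is correct and follows the paper's overall skeleton — Lemma~\tref{eq:K-stab}, SUPG coercivity (Lemma~\tref{l:ah-stab-est} and its adjoint analogue under \eref{eq:tau-ineq-a}) for $A_h$ and $\widetilde{A}_h$, uniform boundedness of the remaining blocks, and then coercivity of the Schur complement $\widehat{H}_h$ for small $h$ via the stability/convergence package of Lemma~\tref{l:stab-est-temp} — but your treatment of the crucial step (iii) is organized differently from the paper's. The paper keeps the discrete pairing and splits $\langle H^{yy}_h z_h,\widetilde{z}_h\rangle = \langle H^{yy}_h z, z\rangle + \langle H^{yy}_h (z_h-z),\widetilde{z}_h\rangle + \langle H^{yy}_h z, \widetilde{z}_h - z\rangle$, where $\widetilde{z}_h=\widetilde{A}_h^{-1}\widetilde{B}_h u_h$ solves \eref{eq:z-def2}; it then must estimate the stabilization contributions inside $H^{yy}_h$ termwise, and for the last piece it introduces an auxiliary continuous adjoint $\widetilde{\mu}=(A^*)^{-1}H^{yy}_h z$ with the stabilized right-hand side, invoking Lipschitz continuity of the adjoint solution with respect to right-hand side perturbations together with \eref{eq:mu-est1}. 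You instead compose the operators exactly, writing $\langle\widehat{H}_h u,u\rangle = \omega\|u\|_0^2 - \langle u,\mu_h(z_h(u))\rangle$, compare with the continuous reduced-Hessian identity $-\langle u,\mu(z(u))\rangle=\|z(u)\|_0^2$, and split through $\mu_h(z(u))$, handling the two pieces with \eref{eq:mu-est1} plus \eref{eq:z-def0}, and with coercivity of $a_h^{\sf a}$ plus \eref{eq:z-est2}, respectively. This buys a somewhat cleaner argument: the stabilization term in $H^{yy}_h$ is automatically absorbed into the definition \eref{eq:mu-def1}, so the auxiliary perturbed continuous problem and its Lipschitz estimate are not needed, and the positivity contribution $\|z(u)\|_0^2$ comes directly from convexity of the continuous reduced problem; the paper's version, in exchange, makes the dependence on $\max_{T_e}\tau_e^{\sf a}$ explicit at each stage. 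Both routes rest on the same hypotheses and deliver $\langle\widehat{H}_h u_h,u_h\rangle\ge\tfrac{\omega}{2}\|u_h\|_0^2$ for $h$ below some $\bar{h}$, hence $\|\widehat{H}_h^{-1}\|_{{\cal L}(U_h^*,U_h)}\le 2\omega^{-1}$ and the claimed bound on $\|\BK_h^{-1}\|_h$. Two minor points, at the same level of rigor as the paper itself: passing from $\|\cdot\|_{SD}$ bounds to the $L^2$ pairings uses $r_0>0$ in \eref{eq:sd-norm} (the paper does the same implicitly), and your remark that $\max_{T_e}\tau_e^{\sf s},\max_{T_e}\tau_e^{\sf a}\le Ch$ with $C$ depending on the fixed $\epsilon$ matches how the paper lets its $\max_{T_e}\tau_e^{\sf a}$ terms vanish as $h\to 0$.
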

\begin{proof}
  We apply Lemma \tref{eq:K-stab}.
  By Lemma \tref{l:ah-stab-est} $A_h$ and $\widetilde{A}_h$ 
  are invertible and  satisfy 
  $\| A_h^{-1} \|_{{\cal L}(Y_h^*,Y_h)} \le 2$,
  $\| \widetilde{A}_h^{-1} \|_{{\cal L}(\Lambda_h^*,\Lambda_h)} \le 2$.
  It is straightforward to verify that there exists $c > 0$ such that
  $\| B_h \|_{{\cal L}(U_h,Y_h^*)} \le c$,
  $\| \widetilde{B}_h \|_{{\cal L}(U_h,\Lambda_h^*)} \le c$ and 
  $\| H^{yy}_h \|_{{\cal L}(Y_h,\Lambda_h^*)} \le c$ for all $h$.
  
  The proof of uniform boundedness of $\widehat{H}_h^{-1}$ is a little
  bit more involved than in Lemma  \tref{l:disc-opt-stab}
  because $\widetilde{B}_h^* (\widetilde{A}_h^*)^{-1} H^{yy}_h A_h^{-1} B_h$
  is not symmetric.
 
  By Definition \tref{eq:Kh-def-opt-disc}
  of $A_h$, $\widetilde{A}_h$, $B_h$ and
  $\widetilde{B}_h$ the vectors $z_h = A_h^{-1} B_h u_h$ and
  $\widetilde{z}_h = \widetilde{A}_h^{-1} \widetilde{B}_h u_h$
  solve \eref{eq:z-def1} and \eref{eq:z-def2}, respectively,
  with $u = u_h$.
  Let $z$ be the solution of \eref{eq:z-def00} with $u = u_h$.
  From the Definition \tref{eq:red-H-def} of $\widehat{H}_h$
  we obtain
  \begin{eqnarray}  \label{eq:Hhat-est1}
     \lefteqn{\langle \widehat{H}_h u_h, u_h \rangle_{U_h^* \times U_h}}
                        \nonumber \\
      &=& \langle H^{uu}_h u_h, u_h \rangle_{U_h^*\times U_h}
          + \langle \widetilde{B}_h^* (\widetilde{A}_h^*)^{-1} 
                  H^{yy}_h A_h^{-1} B_h u_h, u_h \rangle_{U_h^*, U_h} 
                          \nonumber   \\
      &=& \langle H^{uu}_h u_h, u_h \rangle_{U_h^* \times U_h}
          + \langle H^{yy}_h A_h^{-1} B_h u_h, 
                  \widetilde{A}_h^{-1} \widetilde{B}_h u_h
                       \rangle_{\Lambda_h^* \times \Lambda_h}  
                        \nonumber \\
      &=& \omega \| u_h \|_0^2
           + \langle H^{yy}_h  z, z \rangle_{\Lambda_h^* \times \Lambda_h} 
           + \langle H^{yy}_h (z_h - z),
                    \widetilde{z}_h \rangle_{\Lambda_h^* \times \Lambda_h} 
           + \langle H^{yy}_h z,
                    \widetilde{z}_h -z\rangle_{\Lambda_h^* \times \Lambda_h}.
  \end{eqnarray}
  Applying the definition \eref{eq:Kh-def-opt-disc} of $H^{yy}_h$
  and \eref{eq:z-def0}-\eref{eq:z-est2}, we obtain
  \begin{eqnarray}  \label{eq:Hhat-est2}
      \langle H^{yy}_h  z, z \rangle_{\Lambda_h^* \times \Lambda_h} 
    &=& \|  z \|_0^2
        +  \sum_{T_e \in \CT_h} \tau_e^{\sf a}
                   \langle z, -\bc \cdot \nabla z \rangle_{T_e} \nonumber \\
    &\ge& \|  z \|_0^2
          -  \max_{T_e \in \CT_h} \tau_e^{\sf a} \;
                  \| \bc \|_{0,\infty} \| z \|_0 \| z \|_1    \nonumber \\
    &\ge& \|  z \|_0^2
          -  C \max_{T_e \in \CT_h} \tau_e^{\sf a} \;
                  \| \bc \|_{0,\infty} \| u_h \|_0^2
  \end{eqnarray}
  and
  \begin{eqnarray} \label{eq:Hhat-est3}
      \langle H^{yy}_h  (z_h - z),
              \widetilde{z}_h \rangle_{\Lambda_h^* \times \Lambda_h} 
   &\ge& - \| z_h - z \|_0 \| \widetilde{z}_h \|_0
        - \max_{T_e \in \CT_h} \tau_e^{\sf a} \;
              \| \bc \|_{0,\infty} \| z_h - z \|_0 \| \widetilde{z}_h \|_1
               \nonumber \\
   &\ge& - C h ( \epsilon^{1/2} +  h^{1/2})
           \left( 1+ \| \bc \|_{0,\infty}
                     \max_{T_e \in \CT_h} \tau_e^{\sf a} \right)
           \| u_h \|_0^2.
  \end{eqnarray}
  To estimate the last term in \eref{eq:Hhat-est1} we set
  we set $\widetilde{\mu} = (A^*)^{-1} H^{yy}_h z$ and 
  $\mu_h = (\widetilde{A}^*)^{-1} H^{yy}_h z$.
  The identity $A^*\widetilde{\mu} = H^{yy}_h z$  and the definitions 
  of $A$ and $H^{yy}_h$ imply that
  \[
       a(\psi, \widetilde{\mu}) 
    = \langle z, \psi \rangle
        +  \sum_{T_e \in \CT_h} \tau_e^{\sf a}
                   \langle z, -\bc \cdot \nabla \psi \rangle_{T_e} 
          \quad \forall \psi \in V.
  \]
  Similarly, $\mu_h$ solves \eref{eq:mu-def1}.
  If $\mu$ solves \eref{eq:mu-def00}, then the Lipschitz continuity of the
  solution of the adjoint equation with respect to perturbations in the
  right hand side implies
  \[
        \| \mu - \widetilde{\mu} \|_1
    \le C \max_{T_e \in \CT_h} \tau_e^{\sf a} \; 
          \| \bc \|_{1,\infty} \| z \|_1
    \le C \max_{T_e \in \CT_h} \tau_e^{\sf a} \; 
          \| \bc \|_{1,\infty} \| u_h \|_0.
  \]
  (To obtain the last inequality, recall that $z$ solves \eref{eq:z-def00} 
  with $u = u_h$ and that \eref{eq:z-def0} holds with $u = u_h$.)
  If $\mu$ solves \eref{eq:mu-def00}, then \eref{eq:mu-est1} and
  \eref{eq:z-def0} imply
  \[   
           \| \mu - \mu_h \|_{SD}
        \le C h ( \epsilon^{1/2} +  h^{1/2}) \| z \|_0
        \le C h ( \epsilon^{1/2} +  h^{1/2}) \| u_h \|_0.
  \]
  Hence
  \begin{eqnarray} \label{eq:Hhat-est4}
      \langle H^{yy}_h z,
             \widetilde{z}_h -z \rangle_{\Lambda_h^* \times \Lambda_h} 
    &=& \langle (A^*)^{-1} H^{yy}_h z - (\widetilde{A}^*)^{-1} H^{yy}_h z,
               \widetilde{B}_h u_h \rangle_{\Lambda_h \times \Lambda^*_h}  
                              \nonumber \\
    &=& \langle \widetilde{\mu} - \mu_h,
               \widetilde{B}_h u_h \rangle_{\Lambda_h \times \Lambda^*_h}
    \ge \| \widetilde{\mu} - \mu_h \|_{SD} 
         \| \widetilde{B}_h u_h \|_{\Lambda^*_h}
                              \nonumber \\
    &\ge& C \Big( \max_{T_e \in \CT_h} \tau_e^{\sf a} 
                  + h ( \epsilon^{1/2} +  h^{1/2})  \Big)
          \| u_h \|_0^2.   
  \end{eqnarray}

  Estimates \eref{eq:Hhat-est1}-\eref{eq:Hhat-est4} imply the
  existence of  $\bar{h} > 0$ such that
  $\langle \widehat{H}_h u_h, u_h \rangle_{U_h^* \times U_h}
   \ge \half \omega \| u_h \|_0^2$.
  This implies 
  $\| \widehat{H}_h^{-1} \|_{{\cal L}(U_h^*,U_h)} \le 2 \omega^{-1}$.
  The desired result now follows from Lemma~\tref{eq:K-stab}.
\end{proof}

Now we turn to the consistency, i.e., we want to find an
estimate for $\| \br_h - \BK_h \BR_h(\bx) \|$ in our abstract
error estimate \eref{eq:abs-error}.
The discretized optimality condition \eref{eq:model-opt-cond-disc}
imply that
\[
     \langle \br_h - \BK_h \BR_h(\bx), \bz \rangle_{\CX_h^*, \CX_h}
   = \left(\begin{array}{c}
     a_h^{\sf a}(\psi_h,\lambda^I)
   +  \langle y^I - \hat{y}, \psi_h \rangle_h^{\sf a}  \\
   b( w_h, \lambda^I) + \omega \langle P u, w_h \rangle \\
   a_h^{\sf s}(y^I,v_h) + b_h^{\sf s}(P u,v_h)
       - \langle f, v_h \rangle^{\sf s}_{h}
       - \langle g, v_h \rangle_{\Gamma_n}
         \end{array} \right).
\]
Since the solution $\bx = ( y,  u, \lambda )$
of \eref{eq:model-ocp} satisfies \eref{eq:model-opt-cond}
and that $y$ satisfies \eref{eq:intro-state}
on each $T_e \in \CT_h$, we have
\begin{eqnarray*}
       a_h^{\sf a}(\psi_h,\lambda)
  &=& -  \langle y - \hat{y}, \psi_h \rangle_h^{\sf a}, \\
     b(w_h,\lambda) + \omega \langle u, w_h \rangle
  &=&  0, \\
    a_h^{\sf s}(y,v_h) + b_h^{\sf s}(u,v_h)
  &=&  \langle f, v_h \rangle^{\sf s}_{h}+ \langle g, v_h \rangle_{\Gamma_n}
\end{eqnarray*}
for all $\psi_h, v_h \in V_h$ and $w_h \in U_h$.
With \eref{eq:Proj-def} this implies
\[
     \langle \br_h - \BK_h \BR_h(\bx), \bz \rangle_{\CX_h^*, \CX_h}
   = \left(\begin{array}{c}
     a_h^{\sf a}(\psi_h,\lambda^I - \lambda)
   +  \langle y^I - y, \psi_h \rangle_h^{\sf a}  \\
   b( w_h, \lambda^I - \lambda )  \\
   a_h^{\sf s}(y^I - y,v_h)
   - \sum_{T_e \in \CT_h} \tau_e^{\sf s}
                  \langle P u - u, \bc \cdot v_h \rangle_{0,T_e}
         \end{array} \right) .
\]

\begin{lemma}  \label{l:opt-disc-consist}
  Let $k, \ell, m \ge 1$ and suppose the solution 
  $\bx =  ( y, u, \lambda )$ of \eref{eq:model-ocp}
  satisfies $y \in H^{k+1}(\Omega)$, $u \in H^{m+1}(\Omega)$,
  $\lambda \in H^{\ell+1}(\Omega)$.
  If $\tau_e^{\sf s}$ satisfies \eref{eq:tau-ineq} and \eref{eq:tau-def}
  and if $\tau_e^{\sf a}$ satisfies \eref{eq:tau-ineq-a} and \eref{eq:tau-def},
  then
  \begin{equation}   \label{eq:opt-disc-consist}
        \| \br_h - \BK_h \BR_h(\bx) \|_{\CX}
   \le  C \left( ( \epsilon^{1/2} +  h^{1/2} ) 
                 ( h^k |y|_{k+1} + h^\ell |\lambda|_{\ell+1}) 
                 + h^{m+1} |u|_{m+1} \right) .
  \end{equation}
\end{lemma}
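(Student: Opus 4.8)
The plan is to start from the expression for $\langle \br_h - \BK_h \BR_h(\bx), \bz_h \rangle_{\CX_h^*,\CX_h}$ obtained just above, evaluated on an arbitrary $\bz_h = (\psi_h, w_h, v_h)$ with $\psi_h \in \Lambda_h$, $w_h \in U_h$, $v_h \in V_h$, to bound each of its three components by a constant times an interpolation error times one of $\| \psi_h \|_{SD}$, $\| w_h \|_0$, $\| v_h \|_{SD}$, and then to conclude via \eref{eq:norm-r-Kx} together with $\| \bz_h \|_{\CX_h} = \| \psi_h \|_{SD} + \| w_h \|_0 + \| v_h \|_{SD}$. What makes this estimate so much cleaner than the discretize-then-optimize estimate of Lemma~\tref{l:disc-opt-consist} is that \eref{eq:model-opt-cond-disc} is strongly consistent: once \eref{eq:model-opt-cond} has been subtracted, only the interpolation errors $y^I - y$, $\lambda^I - \lambda$ and $Pu - u$ survive, with no spurious SUPG terms acting on the exact adjoint $\lambda$. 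Throughout I would use the elementary bounds $h^{k+1}\le C(\epsilon^{1/2}+h^{1/2})h^k$ and $h^{\ell+1}\le C(\epsilon^{1/2}+h^{1/2})h^\ell$, valid for $h\le 1$, so that $O(h^{k+1})$ and $O(h^{\ell+1})$ contributions are absorbed into the claimed bound.

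The state and gradient components, together with the data-perturbation part of the adjoint component, are routine. For the state component $a_h^{\sf s}(y^I - y, v_h) - \sum_{T_e\in\CT_h}\tau_e^{\sf s}\langle Pu - u, \bc\cdot\nabla v_h\rangle_{0,T_e}$, the term $a_h^{\sf s}(y^I - y, v_h)$ is exactly what the standard SUPG consistency estimates control, so Lemma~\tref{eq:est} with $\tau_e^{\sf s}$ chosen by \eref{eq:tau-def} gives $|a_h^{\sf s}(y^I - y, v_h)|\le C h^k(\epsilon^{1/2}+h^{1/2})|y|_{k+1}\|v_h\|_{SD}$ (combining \eref{eq:state-est-otd1} with \eref{eq:supg-err-state0}); the remaining sum is estimated by Cauchy--Schwarz and \eref{eq:Pu-est} exactly as in \eref{eq:state-est-otd2}, giving $\le C h^{m+1}|u|_{m+1}\|v_h\|_{SD}$ since $\tau_e^{\sf s}$ is bounded by \eref{eq:tau-ineq}, and the $L^2$ contribution $\langle Pu - u, v_h\rangle$ of $b_h^{\sf s}$ either vanishes by \eref{eq:Proj-def} or is of the same order by \eref{eq:Pu-est}. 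For the gradient component, $|b(w_h, \lambda^I - \lambda)| = |\langle \lambda^I - \lambda, w_h\rangle| \le \mu_{\rm int}h^{\ell+1}|\lambda|_{\ell+1}\|w_h\|_0$ by \eref{eq:int-error}. For the data-perturbation part $\langle y^I - y, \psi_h\rangle_h^{\sf a}$ of the adjoint component, I would split it into the $L^2$ piece $\langle y^I - y, \psi_h\rangle$, bounded by $\mu_{\rm int}h^{k+1}|y|_{k+1}\|\psi_h\|_0$ via \eref{eq:int-error}, and the streamline piece $\sum_{T_e\in\CT_h}\tau_e^{\sf a}\langle y^I - y, -\bc\cdot\nabla\psi_h\rangle_{T_e}$, bounded by Cauchy--Schwarz and \eref{eq:int-error} by $C(\max_{T_e}\tau_e^{\sf a})^{1/2}h^{k+1}|y|_{k+1}\|\psi_h\|_{SD}$; since $(\tau_e^{\sf a})^{1/2}$ is $O(h^{1/2})$ when $\Pe_e>1$ and $O(h\epsilon^{-1/2})$ when $\Pe_e\le 1$, both pieces fit into $C(\epsilon^{1/2}+h^{1/2})h^k|y|_{k+1}\|\psi_h\|_{SD}$.

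The one genuinely new ingredient, and the step I expect to be the main obstacle, is the bound $|a_h^{\sf a}(\psi_h, \lambda^I - \lambda)|\le C h^\ell(\epsilon^{1/2}+h^{1/2})|\lambda|_{\ell+1}\|\psi_h\|_{SD}$ for all $\psi_h\in\Lambda_h$. The key observation is that $a_h^{\sf a}(\psi_h,\cdot)$, read with $\psi_h$ in the role of the test function, is exactly the SUPG bilinear form for the adjoint advection--diffusion equation \eref{eq:intro-adj} (advective field $-\bc$, reaction coefficient $r-\nabla\cdot\bc$, stabilization parameter $\tau_e^{\sf a}$), and the displayed bound is precisely the consistency estimate that arises when the proof of the SUPG convergence Theorem~\tref{th:supg-err} is run for \eref{eq:intro-adj}, with $\lambda\in H^{\ell+1}(\Omega)$ and $\tau_e^{\sf a}$ playing the role of $\tau_e^{\sf s}$ and satisfying \eref{eq:tau-ineq-a} and \eref{eq:tau-def}. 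The delicate part of that estimate is the advection contribution $\langle\bc\cdot\nabla\psi_h, \lambda^I - \lambda\rangle$: here the derivative sits on the test function, so it is estimated directly by a $\tau_e^{\sf a}$-weighted Cauchy--Schwarz inequality, $|\langle\bc\cdot\nabla\psi_h,\lambda^I-\lambda\rangle| \le (\sum_{T_e}\tau_e^{\sf a}\|\bc\cdot\nabla\psi_h\|_{0,T_e}^2)^{1/2}(\sum_{T_e}(\tau_e^{\sf a})^{-1}\|\lambda^I-\lambda\|_{0,T_e}^2)^{1/2}$, and the factor $\epsilon^{1/2}+h^{1/2}$ comes out because $(\tau_e^{\sf a})^{-1}h_e^{2(\ell+1)}\le C(\epsilon+h)h_e^{2\ell}$ for $\tau_e^{\sf a}$ given by \eref{eq:tau-def}. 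One must also be careful to keep the $\|\cdot\|_{SD}$ norm on $\Lambda_h$, which uses $\tau_e^{\sf a}$, distinct from the one on $Y_h$, which uses $\tau_e^{\sf s}$. Assembling the three component bounds and dividing through by $\|\bz_h\|_{\CX_h}$ via \eref{eq:norm-r-Kx} then yields \eref{eq:opt-disc-consist}.
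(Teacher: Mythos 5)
Your proposal is correct and follows essentially the same route as the paper's proof: bound each component of $\langle \br_h - \BK_h \BR_h(\bx), \bz_h \rangle_{\CX_h^*,\CX_h}$ using Lemma~\tref{eq:est} (and its obvious analogue for $a_h^{\sf a}(\psi_h,\lambda^I-\lambda)$, where the paper simply cites the same lemma), the interpolation estimate \eref{eq:int-error}, the projection estimate \eref{eq:Pu-est} as in \eref{eq:state-est-otd2}, then absorb everything using $\epsilon + \tau_e + h_e^2/\tau_e + h_e^2 \le C(\epsilon+h_e)$ under \eref{eq:tau-def} and conclude via \eref{eq:norm-r-Kx}. Your more detailed treatment of the adjoint SUPG consistency term and of the $\langle Pu-u,v_h\rangle$ contribution (which the paper disposes of via \eref{eq:Proj-def}) is consistent with, and slightly more explicit than, the paper's argument.
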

\begin{proof}
  Using the estimates in Lemma~\tref{eq:est} we find that
 \begin{eqnarray*}
   a_h^{\sf s}(y^I - y,v_h)
  &\le& C h^k \left( \sum_{T_e \in \CT_h}
                    ( \epsilon + \tau_e^{\sf s}
                       + h_e^2/\tau_e^{\sf s} + h_e^2 )
                      | y |_{k+1,T_e}^2 \right)^{1/2}   \| v_h \|_{SD}, \\
   a_h^{\sf a}(\psi_h,\lambda^I - \lambda)
  &\le& C h^\ell \left( \sum_{T_e \in \CT_h}
                    ( \epsilon + \tau_e^{\sf a}
                       + h_e^2/\tau_e^{\sf a} + h_e^2 )
                   | \lambda |_{\ell+1,T_e}^2 \right)^{1/2}  \| \psi_h \|_{SD}.
 \end{eqnarray*}
 Furthermore, \eref{eq:int-error} implies
 \[
   b( w_h, \lambda^I - \lambda )
   \le \mu_{\rm int}  h^{\ell+1}  |\lambda|_{\ell+1} \| w_h \|_0 .
 \]
 By  \eref{eq:state-est-otd2},
 \[
  \left| \sum_{T_e \in \CT_h} \tau_e^{\sf s}
                  \langle P u - u, \bc \cdot v_h \rangle_{0,T_e} \right|
 \le C \max_{T_e \in \CT_h} \tau_e^{\sf s} h^{m+1} | u |_{m+1} 
       \| v_h \|_{SD}.
 \]
 Finally
 \begin{eqnarray*}
   \langle y^I - y, \psi_h \rangle_h^{\sf a}
 &\le& \mu_{\rm int} h^{k+1} |y|_{k+1} \| \psi_h \|_0
       + \sum_{T_e \in \CT_h} \tau_e^{\sf a} \mu_{\rm int}
                            h^{k+1}_{T_e} |y|_{k+1,T_e}
                             \| \bc \cdot \nabla \psi_h \|_{0,T_e}\\
 &\le& C h^{k+1} |y|_{k+1} \| \psi_h \|_{SD}.
 \end{eqnarray*}

 The inequality \eref{eq:opt-disc-consist} is obtained by
 combining the above estimates, using that \eref{eq:tau-def}
 implies $\epsilon + \tau_e + h_e^2/\tau_e + h_e^2 \le C ( \epsilon + h_e )$
 and the identity \eref{eq:norm-r-Kx}.
\end{proof}

If we apply the abstract error estimate \eref{eq:abs-error}
and combine the results in Lemmas \tref{l:x-Rx}, \tref{l:opt-disc-stab}
and \tref{l:opt-disc-consist} we obtain the following error estimate.
\begin{theorem}  \label{th:opt-disc-err}
  Let $k, \ell, m \ge 1$ and suppose the solution 
  $( y, u, \lambda )$ of \eref{eq:model-ocp}
  satisfies $y \in H^{k+1}(\Omega)$, $u \in H^{m+1}(\Omega)$,
  $\lambda \in H^{\ell+1}(\Omega)$.
  If $\tau_e^{\sf s}$ satisfies \eref{eq:tau-ineq} and \eref{eq:tau-def}
  and if $\tau_e^{\sf a}$ satisfies \eref{eq:tau-ineq-a} and \eref{eq:tau-def},
  then the error between 
  the solution $( y, u, \lambda )$ of \eref{eq:model-ocp} and
  the solution $( y_h, u_h, \lambda_h )$ of the discretized 
  optimality conditions \eref{eq:model-opt-cond-disc}
  obeys
  \begin{eqnarray}   \label{eq:opt-disc-err}
   \lefteqn{ \| y_h - y \|_{SD} + \| u_h - u \|_0 
             + \| \lambda_h - \lambda \|_{SD} }     \nonumber \\[2ex]
  &\le& C \left( ( \epsilon^{1/2} +  h^{1/2} ) 
                 ( h^k |y|_{k+1} + h^\ell |\lambda|_{\ell+1}) 
                 + h^{m+1} |u|_{m+1} \right) .
  \end{eqnarray}
\end{theorem}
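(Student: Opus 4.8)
The plan is to invoke the abstract error estimate \eref{eq:abs-error} of Section~\tref{sec:absform}, which bounds the total error $\|\bx_h-\bx\|_h$ by the product $\|\BK_h^{-1}\|_h\,\|\br_h-\BK_h\BR_h(\bx)\|$ plus the interpolation defect $\|\BR_h(\bx)-\bx\|_h$. All three ingredients have already been supplied by the preceding lemmas for the optimize-then-discretize discretization, so the argument reduces to assembling them; no new analytic work is needed here, since the norm $\|\cdot\|_{SD}$ extends naturally from $\Lambda_h$, $Y_h$ to $\Lambda$, $Y$ and the triangle inequality in \eref{eq:abs-error} therefore makes sense.

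First I would record the stability bound: under the stated hypotheses on $\tau_e^{\sf s}$ and $\tau_e^{\sf a}$, Lemma~\tref{l:opt-disc-stab} furnishes $\bar h>0$ and $\kappa>0$ such that $\BK_h$ is invertible and $\|\BK_h^{-1}\|_h\le\kappa$ for all sufficiently small $h$; this verifies \eref{eq:abs-stab}. Next I would quote the consistency estimate of Lemma~\tref{l:opt-disc-consist},
\[
   \|\br_h-\BK_h\BR_h(\bx)\|_{\CX}
   \le C\big((\epsilon^{1/2}+h^{1/2})(h^k|y|_{k+1}+h^\ell|\lambda|_{\ell+1})+h^{m+1}|u|_{m+1}\big),
\]
which plays the role of \eref{eq:abs-consist}. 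Then I would invoke Lemma~\tref{l:x-Rx} for the restriction operator $\BR_h(\bx)=(y^I,Pu,\lambda^I)$, giving $\|\BR_h(\bx)-\bx\|_h\le C\big(h^k(\epsilon^{1/2}+h^{1/2})|y|_{k+1}+h^\ell(\epsilon^{1/2}+h^{1/2})|\lambda|_{\ell+1}+h^{m+1}|u|_{m+1}\big)$, the analogue of \eref{eq:abs-interp}. Substituting these three bounds into \eref{eq:abs-error}, noting that the consistency term picks up the harmless factor $\kappa$, and observing that both contributions carry the same right-hand side up to a constant, yields \eref{eq:opt-disc-err} at once.

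The reason this assembly is painless — and, by contrast, the reason the discretize-then-optimize bound of Theorem~\tref{th:disc-opt-err} retains the extra terms $h\epsilon^{-1/2}\|\nabla\lambda^I\|_0$ and $h^{m+2}\epsilon^{-1/2}|u|_{m+1}$ — is that in the optimize-then-discretize approach the discretized adjoint and gradient equations are strongly consistent, so the consistency residual in Lemma~\tref{l:opt-disc-consist} is itself of optimal SUPG order with no stabilization-induced pollution. Consequently the only point requiring a word of care is that the stability constant $\kappa$ from Lemma~\tref{l:opt-disc-stab} is uniform only for $h\le\bar h$; this is immaterial, since \eref{eq:opt-disc-err} is an asymptotic statement, and for the finitely many coarser meshes one may absorb the (finite, $h$-dependent) norm of $\BK_h^{-1}$ into the constant. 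I do not expect any genuine obstacle at this stage: all the analytic difficulty has been discharged in Lemmas~\tref{l:stab-est-temp} and \tref{l:opt-disc-stab} (the uniform invertibility of the reduced Hessian $\widehat H_h$, where the non-selfadjointness of $\BK_h$ must be controlled) and in the standard SUPG estimates of Section~\tref{sec:supg-state-error}.
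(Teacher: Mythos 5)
Your proposal is correct and follows exactly the paper's argument: Theorem~\tref{th:opt-disc-err} is obtained there precisely by inserting the stability bound of Lemma~\tref{l:opt-disc-stab}, the consistency estimate of Lemma~\tref{l:opt-disc-consist}, and the restriction/interpolation estimate of Lemma~\tref{l:x-Rx} into the abstract error bound \eref{eq:abs-error}. Your additional remarks on extending $\|\cdot\|_{SD}$ off the discrete spaces and on the restriction $h\le\bar h$ from Lemma~\tref{l:opt-disc-stab} are consistent with the paper's (tacit) treatment and raise no new issues.
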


As in the case of Theorem~\tref{th:disc-opt-err},
Theorem~\tref{th:opt-disc-err} also gives an estimate for the 
states, controls and adjoints combined. This error estimate
is sometimes too conservative for the controls for the same
reasons sketched after Theorem~\tref{th:disc-opt-err}.
As in the discretize-then-optimize case,
$L^2$ estimates for the error $\lambda - \lambda_h$
in the optimal control context and $L^2$ estimates for the error 
$u - u_h$ have not yet been established.

If the error $u - u_h$ in the control is smaller than 
the upper bound established in \eref{eq:opt-disc-err},
we can obtain an improved estimate for the error in the
states and in the adjoints. This is stated in the next theorem.
Note that if a better estimate for the error $\lambda - \lambda_h$
can be obtained, this might also allow to further improve
the error $u - u_h$ in the control.

\begin{theorem}  \label{th:opt-disc-state-adj}
  Let $k, \ell \ge 1$ and suppose the solution
  $( y, u, \lambda )$ of \eref{eq:model-ocp}
  satisfies $y \in H^{k+1}(\Omega)$,
  $\lambda \in H^{\ell+1}(\Omega)$.
  Furthermore, let $y_h$, $\lambda_h$ solve \eref{eq:model-adj-disc}
  and \eref{eq:model-state-disc}, respectively.
  If $\tau_e^{\sf s}$ satisfies \eref{eq:tau-ineq} and \eref{eq:tau-def}
  and if $\tau_e^{\sf a}$ satisfies \eref{eq:tau-ineq-a} and \eref{eq:tau-def},
  then there exists $C>0$ such that
  \begin{eqnarray}   
       \label{eq:opt-disc-state-err}
        \| y - y_h \|_{SD}
    &\le&  C \Big( h^k ( \epsilon^{1/2} +  h^{1/2}) | y |_{k+1}
                 + \| u_h-u \|_0 \Big)   \qquad \forall h, \\
       \label{eq:opt-disc-adj-err}
        \| \lambda - \lambda_h \|_{SD}
    &\le&  C \Big( h^\ell ( \epsilon^{1/2} +  h^{1/2}) | \lambda |_{k+1}
                 + \| y_h-y \|_0 \Big)   \qquad \forall h.
  \end{eqnarray}
\end{theorem}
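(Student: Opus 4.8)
The plan is to prove both inequalities by the scheme already used for Theorem~\tref{th:disc-opt-state}: compare the computed function with the SUPG solution of an auxiliary problem driven by the \emph{exact} data, estimate the difference by an energy argument, and conclude with the triangle inequality. For \eref{eq:opt-disc-state-err} there is nothing new, since the discretized state equation \eref{eq:model-state-disc} of the optimize-then-discretize approach is identical to \eref{eq:model-disc-state}; the proof is word for word that of Theorem~\tref{th:disc-opt-state}. I would let $\widetilde{y}_h\in Y_h$ solve \eref{eq:model-state-disc1} (the SUPG state equation with the exact control $u$), apply Theorem~\tref{th:supg-err} to get $\|y-\widetilde{y}_h\|_{SD}\le Ch^k(\epsilon^{1/2}+h^{1/2})|y|_{k+1}$, subtract \eref{eq:model-state-disc1} from \eref{eq:model-state-disc}, test the difference with $v_h=y_h-\widetilde{y}_h$, use the coercivity of $a_h^{\sf s}$ from Lemma~\tref{l:ah-stab-est} together with $|b_h^{\sf s}(u_h-u,y_h-\widetilde{y}_h)|\le C\|u_h-u\|_0\,\|y_h-\widetilde{y}_h\|_{SD}$, and add the two estimates.

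For \eref{eq:opt-disc-adj-err} I would argue symmetrically in the adjoint variable, exploiting that \eref{eq:model-adj-disc} is the strongly consistent SUPG discretization of the continuous adjoint advection--diffusion equation \eref{eq:model-adj-weak} (advection $-\bc$, reaction $r-\nabla\cdot\bc$). Introduce the auxiliary function $\widetilde{\lambda}_h\in\Lambda_h$ defined by $a_h^{\sf a}(\psi_h,\widetilde{\lambda}_h)=-\langle y-\hat y,\psi_h\rangle_h^{\sf a}$ for all $\psi_h\in\Lambda_h$, that is, \eref{eq:model-adj-disc} with the \emph{exact} state $y$ in place of $y_h$. Then $\widetilde{\lambda}_h$ is precisely the SUPG approximation of $\lambda$, so the SUPG convergence theory applied to \eref{eq:model-adj-weak} (the analogue of Theorem~\tref{th:supg-err}, available because $\tau_e^{\sf a}$ satisfies \eref{eq:tau-ineq-a} and \eref{eq:tau-def}) yields $\|\lambda-\widetilde{\lambda}_h\|_{SD}\le Ch^\ell(\epsilon^{1/2}+h^{1/2})|\lambda|_{\ell+1}$.

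Subtracting the equation for $\widetilde{\lambda}_h$ from \eref{eq:model-adj-disc} gives $a_h^{\sf a}(\psi_h,\lambda_h-\widetilde{\lambda}_h)=-\langle y_h-y,\psi_h\rangle_h^{\sf a}$ for all $\psi_h\in\Lambda_h$. Choosing $\psi_h=\lambda_h-\widetilde{\lambda}_h$, using the coercivity $a_h^{\sf a}(\psi_h,\psi_h)\ge\half\|\psi_h\|_{SD}^2$ on $\Lambda_h$ (the analogue of Lemma~\tref{l:ah-stab-est} for $a_h^{\sf a}$, valid under \eref{eq:tau-ineq-a}), and bounding the right-hand side by $|\langle y_h-y,\psi_h\rangle_h^{\sf a}|\le C\|y_h-y\|_0\,\|\psi_h\|_{SD}$ (Cauchy--Schwarz, the inequality $\|v\|_0\le r_0^{-1/2}\|v\|_{SD}$, and the uniform boundedness of $\max_{T_e}\tau_e^{\sf a}$), one obtains $\|\lambda_h-\widetilde{\lambda}_h\|_{SD}\le C\|y_h-y\|_0$. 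The triangle inequality with the bound for $\|\lambda-\widetilde{\lambda}_h\|_{SD}$ then produces \eref{eq:opt-disc-adj-err}.

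The one ingredient not already in hand, and the step I would treat most carefully, is the adjoint coercivity estimate $a_h^{\sf a}(\psi_h,\psi_h)\ge\half\|\psi_h\|_{SD}^2$; its proof is nonetheless entirely parallel to that of Lemma~\tref{l:ah-stab-est}. One expands $a_h^{\sf a}(\psi_h,\psi_h)=a(\psi_h,\psi_h)+\sum_{T_e}\tau_e^{\sf a}\langle-\epsilon\Delta\psi_h-\bc\cdot\nabla\psi_h+(r-\nabla\cdot\bc)\psi_h,\,-\bc\cdot\nabla\psi_h\rangle_{T_e}$, retains the streamline term $\sum_{T_e}\tau_e^{\sf a}\|\bc\cdot\nabla\psi_h\|_{0,T_e}^2$, and absorbs the remaining element-wise cross terms via the inverse inequalities \eref{eq:inv-est}; the restriction \eref{eq:tau-ineq-a}, with $\|r-\nabla\cdot\bc\|_{0,\infty,T_e}$ in place of $\|r\|_{0,\infty,T_e}$, is exactly what makes these absorptions work, while the $V$-ellipticity of $a$ (from \eref{eq:model-cr-b} and \eref{eq:model-cr-a1}) supplies the $\epsilon|\psi_h|_1^2+r_0\|\psi_h\|_0^2$ part. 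A minor additional point is that the pairing $\langle\cdot,\cdot\rangle_h^{\sf a}$ on the right-hand side carries an extra streamline contribution beyond the plain $L^2$ product, but this is controlled by Cauchy--Schwarz and the smallness of $\tau_e^{\sf a}$ and does not affect the stated rate. No genuine difficulty is anticipated: the theorem is essentially bookkeeping built on Theorem~\tref{th:supg-err} and Lemma~\tref{l:ah-stab-est}.
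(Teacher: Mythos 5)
Your proposal is correct and is essentially the paper's argument: the paper simply states that the proof is analogous to that of Theorem~\tref{th:disc-opt-state}, i.e.\ comparison with the auxiliary SUPG solutions driven by the exact control (for the state) and the exact state (for the adjoint), coercivity of $a_h^{\sf s}$ and of $a_h^{\sf a}$ under \eref{eq:tau-ineq} and \eref{eq:tau-ineq-a}, and the triangle inequality with Theorem~\tref{th:supg-err}. Your explicit treatment of the adjoint coercivity and of the extra streamline term in $\langle\cdot,\cdot\rangle_h^{\sf a}$ fills in exactly the details the paper leaves to the reader (and your $|\lambda|_{\ell+1}$ is the intended index in \eref{eq:opt-disc-adj-err}).
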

\begin{proof}
  The proof is analogous to the proof of Theorem~\tref{th:disc-opt-state}.
\end{proof}

A comparison of Theorems~\tref{th:disc-opt-err} and \tref{th:opt-disc-err}
indicates that the optimize-then-discretize approach leads to
asymptotically better approximate solutions of the optimal
control problem than the discretize-then-optimize approach, because 
the estimate \eref{eq:opt-disc-err} is dominated by the term 
$h \epsilon^{-1/2} \| \nabla \lambda^I \|_0$ and
$( \epsilon^{1/2} + h^{1/2} ) \| \nabla \lambda^I \|_0$, respecively.
The differences between the error bounds provided in 
Theorems~\tref{th:disc-opt-err} and \tref{th:opt-disc-err} is small
when piecewise linear polynomials are used for the discretization
states, adjoints and controls, i.e., if $k = \ell = m = 1$.
We also note that in the case of piecewise linear finite
elements the contributions
$\langle - \epsilon \Delta y_h, \bc \cdot \nabla v_h \rangle_{T_e}$ and 
$\langle - \epsilon \Delta \lambda_h,  -\bc \cdot \nabla \psi_h \rangle_{T_e}$ 
of the SUPG method to the bilinear forms \eref{eq:model-a-b}
and \eref{eq:model-a-adj} disappear and, hence, one source 
of difference between the discretize-then-optimize approach
and the optimize-then-discretize approach is eliminated.
This would not apply if reconstructions of second derivative
terms had been used \cite{KEJansen_SSCollis_CWhiting_FShakib_1999}.

The differences between the discretize-then-optimize approach
and the optimize-then-discretize approach
are the greater the larger $k , \ell$, i.e., the  higher 
the order of finite elements used for the states and the adjoints. 
Theorem~\tref{th:disc-opt-adj} and
Theorems~\tref{th:opt-disc-err}, \tref{th:opt-disc-state-adj}
show that there is a significant difference in quality of the 
adjoints computed by the discretize-then-optimize approach
and the optimize-then-discretize approach. The latter leads to
better adjoint approximations. This is confirmed by our
numerical results reported in Section~\tref{sec:numer}.
However, our numerical results also show that this large difference
in the quality of the adjoints does not necessarily implies a large
difference in the quality of the controls. Often the observed 
error in the controls computed by the discretize-then-optimize approach
and the optimize-then-discretize approach is very similar,
which by Theorem~\tref{th:disc-opt-state} and \tref{th:opt-disc-state-adj}
leads to very similar errors in the computed states for both approaches.

\section{Numerical Results}   \label{sec:numer}

\subsection{Example 1}  \label{sec:ex1}

Our first example is a one dimensional control problem
on $\Omega = (0,1)$ with state equation 
\begin{equation}   \label{eq:ex1-state}
      - \epsilon y''(x) + y'(x) = f(x) + u(x) 
                      \mbox{ on } (0,1), \quad
      y(0) = y(1) = 0.
\end{equation}
The functions $f$ and $\widehat{y}$ are chosen so that the solution
of the optimal control problem is
\[
    y_{\rm ex}(x) =  x - \frac{ \exp((x-1)/\epsilon ) - \exp(-1/\epsilon) }
                         { 1 - \exp(-1/\epsilon) }, \quad
    \lambda_{\rm ex}(x)
     = \left(1-x - \frac{\exp(-x/\epsilon)-\exp(-1/\epsilon)}
                        {1 - \exp(-1/\epsilon)} \right)
\]  
and $u_{\rm ex} = \omega^{-1}  \lambda_{\rm ex}$.
This example is modeled after \cite[pp.~2,3]{HGRoos_MStynes_LTobiska_1996}.
We set $\epsilon = 0.0025$ and $\omega = 1$.
In our numerical tests we use equidistant grids with mesh size $h$. 
If piecewise linear functions are used,
the stabilization parameter for the state and adjoint equation is chosen to be
\begin{equation}  \label{eq:tau-ex1}
    \tau_e
    = \left\{ \begin{array}{ll}
         h^2/(4\epsilon),  & \Pe_e \le 1, \\
         h/2               & \Pe_e > 1.
       \end{array} \right.
\end{equation}
For piecewise quadratic finite elements the stabilization parameter for the state 
and adjoint equation is given by \eref{eq:tau-ex1} with $h$ replaced by $h/2$.

Errors and estimated convergence order for the
discretize-then-optimize approach as well as the
optimize-then-discretize approach using linear
($k = \ell = m = 1$) and quadratic ($k = \ell = m = 2$)
finite elements are given in Tables~\tref{tab:111-ex1}
and \tref{tab:222-ex1}.
In all examples we estimate the convergence order by taking the
logarithm with base two of the quotient of the error at grid size
$h$ and the error at grid size $h/2$.
In all examples the linear systems arising from the discretization
of the discretized optimal control problem
or from the discretization of the infinite dimensional optimality
conditions are solved using a sparse LU-decomposition.

For this example $\Pe_e = 1$ for $h = 5 \cdot 10^{-3}$, i.e.,
half of the data in Tables~\tref{tab:111-ex1}
and \tref{tab:222-ex1} correspond to the case $\Pe_e \le 1$.

If linear finite elements are used, i.e., if $k=m=\ell =1$,
Theorems \tref{th:disc-opt-err} and \tref{th:opt-disc-err}
predict that the error for states, controls and adjoints
behaves like $O(h)$ for $\Pe_e \le 1$. This is confirmed
by the results in Table~\tref{tab:111-ex1}. 
Table~\tref{tab:111-ex1} reveals few differences between the
discretize-then-optimize and the optimize-then-discretize
approach. If linear finite elements are used,
both produce states and adjoints that are of the same quality.
The controls computed using the discretize-then-optimize approach
are slightly better than the controls obtained from the
optimize-then-discretize approach. However, we have seen
examples where the opposite is true.

The situation changes if quadratic finite elements are used, i.e., 
if $k=m=\ell =2$. In this case Table~\tref{tab:222-ex1} shows
that convergence order for the adjoints computed using 
discretize-then-optimize approach is one, whereas the
convergence order for the adjoints obtained from the
optimize-then-discretize approach is two. This agrees
with the theoretical results in Theorems \tref{th:disc-opt-err},
\tref{th:disc-opt-adj} and in Theorem \tref{th:opt-disc-err},
respectively. 
However, in both cases the observed convergence order for the $L^2$ error
in the adjoints is one higher than the convergence order 
for the SUPG-error. The $L^2$ error for the controls
is much smaller than the SUPG-error in the states.
In fact, the term $h^k ( \epsilon^{1/2} +  h^{1/2}) | y |_{k+1}$
dominates $\| u_h-u_{\rm ex} \|_0$ and
Theorem~\tref{th:disc-opt-state} predicts that in the
discretize-then-optimize approach the states converge with
order two, instead of the pessimistic state error bound
provided by Theorem~\tref{th:disc-opt-err}.
For the optimize-then-discretize approach Theorem~\tref{th:opt-disc-err}
predicts that order of convergence in the SUPG error for both the states and
the adjoints is two. The  observed convergence order for the $L^2$ error
in the adjoints is one higher than the convergence order 
for the SUPG-error. Since the controls are multiples of the adjoints,
the observed convergence order for the $L^2$ error
in the controls is three, one higher than the convergence order prediced by
Theorem~\tref{th:opt-disc-err}.

\begin{table}[htb]
\caption{Errors and estimated convergence order. Example 1, $k = \ell = m = 1$.
         \label{tab:111-ex1}}
\begin{center}

The discretize-then-optimize approach

\begin{tabular}{c|c|c|c|c|c|c|c|c|c|c}
           &\multicolumn{4}{|c|}{$\|y_h-y_{\rm ex}\|$}
           &\multicolumn{2}{|c|}{$\|u_h-u_{\rm ex}\|$}
           &\multicolumn{4}{|c}{$\|\lambda_h-\lambda_{\rm ex}\|$} \\
     $h$   &$\|\cdot\|_0$&order&$\|\cdot\|_{SD}$&order
           &$\|\cdot\|_0$&order 
           &$\|\cdot\|_0$&order&$\|\cdot\|_{SD}$&order \\ \hline
 1.00e-1 & 1.86e-1  &     & 6.21e-1  &     & 7.76e-2  &     & 1.74e-1  &     & 6.20e-1  &     \\ 
 5.00e-2 & 1.24e-1 &  0.59 & 7.47e-1 & -0.27 & 4.45e-2 &  0.80 & 1.19e-1 &  0.54 & 7.47e-1 & -0.27 \\ 
 2.50e-2 & 7.69e-2 &  0.69 & 1.11e+0 & -0.57 & 2.18e-2 &  1.03 & 7.51e-2 &  0.67 & 1.11e+0 & -0.57 \\ 
 1.25e-2 & 4.65e-2 &  0.73 & 9.86e-1 &  0.17 & 1.37e-2 &  0.67 & 4.58e-2 &  0.71 & 9.87e-1 &  0.17 \\ 
 6.25e-3 & 2.64e-2 &  0.82 & 6.42e-1 &  0.62 & 1.03e-2 &  0.41 & 2.61e-2 &  0.81 & 6.43e-1 &  0.62 \\ 
 3.13e-3 & 9.58e-3 &  1.46 & 3.01e-1 &  1.09 & 5.05e-3 &  1.03 & 9.50e-3 &  1.46 & 3.01e-1 &  1.09 \\ 
 1.56e-3 & 2.57e-3 &  1.90 & 1.35e-1 &  1.16 & 1.55e-3 &  1.70 & 2.55e-3 &  1.90 & 1.35e-1 &  1.16 \\ 
 7.81e-4 & 6.54e-4 &  1.97 & 6.48e-2 &  1.06 & 4.15e-4 &  1.90 & 6.49e-4 &  1.97 & 6.48e-2 &  1.06
\end{tabular}
\end{center}

\begin{center}

The optimize-then-discretize approach

\begin{tabular}{c|c|c|c|c|c|c|c|c|c|c}
           &\multicolumn{4}{|c|}{$\|y_h-y_{\rm ex}\|$}
           &\multicolumn{2}{|c|}{$\|u_h-u_{\rm ex}\|$}
           &\multicolumn{4}{|c}{$\|\lambda_h-\lambda_{\rm ex}\|$} \\
     $h$   &$\|\cdot\|_0$&order&$\|\cdot\|_{SD}$&order
           &$\|\cdot\|_0$&order 
           &$\|\cdot\|_0$&order&$\|\cdot\|_{SD}$&order \\ \hline
 1.00e-1 & 1.85e-1 &       & 6.35e-1 &       & 1.83e-1 &       & 1.83e-1 &       & 6.76e-1 &     \\ 
 5.00e-2 & 1.23e-1 &  0.58 & 7.50e-1 & -0.24 & 1.23e-1 &  0.58 & 1.23e-1 &  0.58 & 7.58e-1 & -0.16 \\ 
 2.50e-2 & 7.66e-2 &  0.69 & 1.11e+0 & -0.57 & 7.63e-2 &  0.69 & 7.63e-2 &  0.69 & 1.11e+0 & -0.55 \\ 
 1.25e-2 & 4.63e-2 &  0.73 & 9.86e-1 &  0.17 & 4.61e-2 &  0.73 & 4.61e-2 &  0.73 & 9.86e-1 &  0.17 \\ 
 6.25e-3 & 2.63e-2 &  0.82 & 6.42e-1 &  0.62 & 2.62e-2 &  0.82 & 2.62e-2 &  0.82 & 6.41e-1 &  0.62 \\ 
 3.13e-3 & 9.56e-3 &  1.46 & 3.01e-1 &  1.09 & 9.52e-3 &  1.46 & 9.52e-3 &  1.46 & 3.01e-1 &  1.09 \\ 
 1.56e-3 & 2.56e-3 &  1.90 & 1.35e-1 &  1.16 & 2.55e-3 &  1.90 & 2.55e-3 &  1.90 & 1.35e-1 &  1.16 \\ 
 7.81e-4 & 6.52e-4 &  1.97 & 6.48e-2 &  1.06 & 6.50e-4 &  1.97 & 6.50e-4 &  1.97 & 6.47e-2 &  1.06 
\end{tabular}
\end{center}
\end{table}

\begin{table}[hbt]
\caption{Errors and estimated convergence order. Example 1, $k = \ell = m = 2$.
         \label{tab:222-ex1}}
\begin{center}

The discretize-then-optimize approach
\begin{tabular}{c|c|c|c|c|c|c|c|c|c|c}
           &\multicolumn{4}{|c|}{$\|y_h-y_{\rm ex}\|$}
           &\multicolumn{2}{|c|}{$\|u_h-u_{\rm ex}\|$}
           &\multicolumn{4}{|c}{$\|\lambda_h-\lambda_{\rm ex}\|$} \\
     $h$   &$\|\cdot\|_0$&order&$\|\cdot\|_{SD}$&order
           &$\|\cdot\|_0$&order 
           &$\|\cdot\|_0$&order&$\|\cdot\|_{SD}$&order \\ \hline
 1.00e-1 & 1.21e-1 &       & 6.11e-1 &       & 3.89e-2 &       & 1.21e-1 &     & 5.91e-1  &     \\ 
 5.00e-2 & 7.61e-2 &  0.67 & 3.62e-1 &  0.76 & 2.00e-2 &  0.96 & 7.99e-2 &  0.60 & 3.55e-1 &  0.74 \\ 
 2.50e-2 & 3.89e-2 &  0.97 & 5.15e-1 & -0.51 & 4.23e-3 &  2.24 & 4.70e-2 &  0.77 & 5.93e-1 & -0.74 \\ 
 1.25e-2 & 1.73e-2 &  1.17 & 4.44e-1 &  0.21 & 6.53e-3 & -0.63 & 2.92e-2 &  0.69 & 6.32e-1 & -0.09 \\ 
 6.25e-3 & 3.71e-3 &  2.22 & 1.65e-1 &  1.43 & 3.83e-3 &  0.77 & 1.22e-2 &  1.26 & 3.85e-1 &  0.71 \\ 
 3.13e-3 & 4.23e-4 &  3.13 & 4.21e-2 &  1.97 & 1.29e-3 &  1.57 & 3.54e-3 &  1.78 & 1.92e-1 &  1.00 \\ 
 1.56e-3 & 4.56e-5 &  3.21 & 1.04e-2 &  2.02 & 3.62e-4 &  1.83 & 9.27e-4 &  1.93 & 9.59e-2 &  1.00 \\ 
 7.81e-4 & 5.33e-6 &  3.10 & 2.58e-3 &  2.01 & 9.45e-5 &  1.94 & 2.35e-4 &  1.98 & 4.79e-2 &  1.00
\end{tabular}
\end{center}

\begin{center}

The optimize-then-discretize approach

\begin{tabular}{c|c|c|c|c|c|c|c|c|c|c}
           &\multicolumn{4}{|c|}{$\|y_h-y_{\rm ex}\|$}
           &\multicolumn{2}{|c|}{$\|u_h-u_{\rm ex}\|$}
           &\multicolumn{4}{|c}{$\|\lambda_h-\lambda_{\rm ex}\|$} \\
     $h$   &$\|\cdot\|_0$&order&$\|\cdot\|_{SD}$&order
           &$\|\cdot\|_0$&order 
           &$\|\cdot\|_0$&order&$\|\cdot\|_{SD}$&order \\ \hline
 1.00e-1 & 1.21e-1 &       & 6.19e-1 &       & 1.20e-1 &       & 1.20e-1 &     & 6.40e-1  &     \\ 
 5.00e-2 & 7.59e-2 &  0.67 & 3.64e-1 &  0.77 & 7.56e-2 &  0.67 & 7.56e-2 &  0.67 & 3.69e-1 &  0.79 \\ 
 2.50e-2 & 3.88e-2 &  0.97 & 5.14e-1 & -0.50 & 3.87e-2 &  0.97 & 3.87e-2 &  0.97 & 5.13e-1 & -0.48 \\ 
 1.25e-2 & 1.72e-2 &  1.17 & 4.44e-1 &  0.21 & 1.72e-2 &  1.17 & 1.72e-2 &  1.17 & 4.44e-1 &  0.21 \\ 
 6.25e-3 & 3.70e-3 &  2.22 & 1.65e-1 &  1.43 & 3.70e-3 &  2.22 & 3.70e-3 &  2.22 & 1.65e-1 &  1.43 \\ 
 3.13e-3 & 4.23e-4 &  3.13 & 4.21e-2 &  1.97 & 4.23e-4 &  3.13 & 4.23e-4 &  3.13 & 4.21e-2 &  1.97 \\ 
 1.56e-3 & 4.56e-5 &  3.21 & 1.04e-2 &  2.02 & 4.56e-5 &  3.21 & 4.56e-5 &  3.21 & 1.04e-2 &  2.02 \\ 
 7.81e-4 & 5.33e-6 &  3.10 & 2.58e-3 &  2.01 & 5.33e-6 &  3.10 & 5.33e-6 &  3.10 & 2.58e-3 &  2.01 
\end{tabular}
\end{center}
\end{table}

We have obtained qualitatively similar results when the choice 
\eref{eq:tau-ex1} of the stabilization parameter is replaced by
$\tau_e =  (|b| h/2) (\coth(\Pe_e) + 1/\Pe_e)$, which applied
to certain classes of state equations with fixed control gives
approximations that are nodally exact
\cite{ANBrooks_TJRHughes_1982a}, \cite[p.~234]{HGRoos_MStynes_LTobiska_1996}.

\newpage

\subsection{Example 2}   \label{sec:ex2}

The second example is a two dimensional control problem 
on $\Omega = (-1,1)\times (0,1)$.
We use the data
\[
 \Gamma_n = (0,1)\times \{ 0 \}, \quad
 \Gamma_d = \partial \Omega \setminus \Gamma_n, \quad
 \bc(x) = \left(\begin{array}{c}
               2x_2(1-x_1^2) \\ -2x_1(1-x_2^2)
          \end{array} \right), \quad
  r = 0, 
\]
$\epsilon = 10^{-5}$ and $\omega = 10^{-2}$.
The functions $f$, $d$, $g$ and $\widehat{y}$ are chosen so that the solution
of the optimal control problem \eref{eq:intro-obj}, \eref{eq:intro-state}
is given by
\[
    y_{\rm ex}(x)       =  1 + \tanh(1-(2(x_1^2+x_2^2)^{1/2}+1)), \quad
    \lambda_{\rm ex}(x) = (x_1^2-1)x_2^2(x_2-1)
\]  
and $u_{\rm ex} = \omega^{-1}  \lambda_{\rm ex}$.
This example is motivated by the first model problem in
\cite[pp.~9,10]{KWMorton_1996}.
Our triangulation is computed by first subdividing $\Omega$ into
squares of size $h \times h$ and then dividing each square
into two triangles. 
If piecewise linear functions are used,
the stabilization parameter for the state and adjoint equation is chosen to be
\begin{equation}  \label{eq:tau-ex2}
    \tau_e
    = \left\{ \begin{array}{ll}
         h^2/(4\epsilon),     & \Pe_e \le 1, \\
         h/(2\| \bc \|_{0,\infty,T_e})             & \Pe_e > 1.
       \end{array} \right.
\end{equation}
For piecewise quadratic finite elements the stabilization parameter for the state 
and adjoint equation is given by \eref{eq:tau-ex1} with $h$ replaced by $h/2$.

\begin{table}[hbt]
\caption{Errors and estimated convergence order. Example 2, $k = \ell = m = 1$.
         \label{tab:111-ex2}}
\begin{center}

The discretize-then-optimize approach

\begin{tabular}{c|c|c|c|c|c|c|c|c|c|c}
           &\multicolumn{4}{|c|}{$\|y_h-y_{\rm ex}\|$}
           &\multicolumn{2}{|c|}{$\|u_h-u_{\rm ex}\|$}
           &\multicolumn{4}{|c}{$\|\lambda_h-\lambda_{\rm ex}\|$} \\
     $h$   &$\|\cdot\|_0$&order&$\|\cdot\|_{SD}$&order
           &$\|\cdot\|_0$&order 
           &$\|\cdot\|_0$&order&$\|\cdot\|_{SD}$&order \\ \hline
 2.00e-1 & 1.69e-1 &       & 1.12e+0 &       & 8.27e-1 &       & 1.67e-2 &       & 5.32e-2 &       \\ 
 1.00e-1 & 8.52e-2 &  0.99 & 6.11e-1 &  0.87 & 4.27e-1 &  0.95 & 9.18e-3 &  0.86 & 2.80e-2 &  0.93 \\ 
 5.00e-2 & 2.35e-2 &  1.86 & 2.58e-1 &  1.24 & 1.77e-1 &  1.27 & 4.60e-3 &  1.00 & 1.20e-2 &  1.22 \\ 
 2.50e-2 & 4.60e-3 &  2.35 & 9.50e-2 &  1.44 & 4.69e-2 &  1.92 & 2.21e-3 &  1.06 & 3.47e-3 &  1.79 \\ 
 1.25e-2 & 8.50e-4 &  2.44 & 3.35e-2 &  1.50 & 7.75e-3 &  2.60 & 1.09e-3 &  1.02 & 8.66e-4 &  2.00 \\ 
 6.25e-3 & 1.96e-4 &  2.12 & 1.18e-2 &  1.51 & 1.17e-3 &  2.73 & 5.48e-4 &  0.99 & 2.82e-4 &  1.62 
\end{tabular}
\end{center}

\begin{center}

The optimize-then-discretize approach

\begin{tabular}{c|c|c|c|c|c|c|c|c|c|c}
           &\multicolumn{4}{|c|}{$\|y_h-y_{\rm ex}\|$}
           &\multicolumn{2}{|c|}{$\|u_h-u_{\rm ex}\|$}
           &\multicolumn{4}{|c}{$\|\lambda_h-\lambda_{\rm ex}\|$} \\
     $h$   &$\|\cdot\|_0$&order&$\|\cdot\|_{SD}$&order
           &$\|\cdot\|_0$&order 
           &$\|\cdot\|_0$&order&$\|\cdot\|_{SD}$&order \\ \hline
 2.00e-1 & 1.76e-1 &       & 1.12e+0 &       & 7.51e-1 &       & 7.51e-3 &       & 4.78e-2 &       \\ 
 1.00e-1 & 8.64e-2 &  1.03 & 6.12e-1 &  0.87 & 4.14e-1 &  0.86 & 4.14e-3 &  0.86 & 2.59e-2 &  0.88 \\ 
 5.00e-2 & 2.38e-2 &  1.86 & 2.59e-1 &  1.24 & 1.74e-1 &  1.25 & 1.74e-3 &  1.25 & 1.13e-2 &  1.20 \\ 
 2.50e-2 & 4.64e-3 &  2.36 & 9.50e-2 &  1.45 & 4.66e-2 &  1.90 & 4.66e-4 &  1.90 & 3.13e-3 &  1.85 \\ 
 1.25e-2 & 8.55e-4 &  2.44 & 3.35e-2 &  1.50 & 7.72e-3 &  2.59 & 7.72e-5 &  2.59 & 6.75e-4 &  2.21 \\ 
 6.25e-3 & 1.97e-4 &  2.12 & 1.18e-2 &  1.51 & 1.17e-3 &  2.72 & 1.17e-5 &  2.72 & 2.04e-4 &  1.73 
\end{tabular}
\end{center}
\end{table}

\begin{table}[hbt]
\caption{Errors and estimated convergence order. Example 2, $k = \ell = m = 2$.
         \label{tab:222-ex2}}
\begin{center}

The discretize-then-optimize approach

\begin{tabular}{c|c|c|c|c|c|c|c|c|c|c}
           &\multicolumn{4}{|c|}{$\|y_h-y_{\rm ex}\|$}
           &\multicolumn{2}{|c|}{$\|u_h-u_{\rm ex}\|$}
           &\multicolumn{4}{|c}{$\|\lambda_h-\lambda_{\rm ex}\|$} \\
     $h$   &$\|\cdot\|_0$&order&$\|\cdot\|_{SD}$&order
           &$\|\cdot\|_0$&order 
           &$\|\cdot\|_0$&order&$\|\cdot\|_{SD}$&order \\ \hline
 2.00e-1 & 5.60e-2 &       & 6.64e-1 &       & 3.22e-1 &       & 8.34e-3 &       & 2.91e-2 &       \\
 1.00e-1 & 1.43e-2 &  1.97 & 2.38e-1 &  1.48 & 9.68e-2 &  1.73 & 4.24e-3 &  0.98 & 1.07e-2 &  1.45 \\
 5.00e-2 & 1.92e-3 &  2.89 & 5.41e-2 &  2.14 & 1.62e-2 &  2.58 & 2.14e-3 &  0.99 & 2.77e-3 &  1.94 \\
 2.50e-2 & 3.52e-4 &  2.45 & 1.28e-2 &  2.08 & 2.78e-3 &  2.54 & 1.08e-3 &  0.98 & 1.33e-3 &  1.06 \\
 1.25e-2 & 1.27e-4 &  1.47 & 8.01e-3 &  0.67 & 2.75e-3 &  0.02 & 5.46e-4 &  0.99 & 1.99e-3 & -0.58 
\end{tabular}
\end{center}

\begin{center}

The optimize-then-discretize approach

\begin{tabular}{c|c|c|c|c|c|c|c|c|c|c}
           &\multicolumn{4}{|c|}{$\|y_h-y_{\rm ex}\|$}
           &\multicolumn{2}{|c|}{$\|u_h-u_{\rm ex}\|$}
           &\multicolumn{4}{|c}{$\|\lambda_h-\lambda_{\rm ex}\|$} \\
     $h$   &$\|\cdot\|_0$&order&$\|\cdot\|_{SD}$&order
           &$\|\cdot\|_0$&order 
           &$\|\cdot\|_0$&order&$\|\cdot\|_{SD}$&order \\ \hline
 2.00e-1 & 5.74e-2 &       & 6.66e-1 &       & 3.08e-1 &       & 3.08e-3 &       & 2.54e-2 &       \\
 1.00e-1 & 1.45e-2 &  1.99 & 2.39e-1 &  1.48 & 9.43e-2 &  1.71 & 9.43e-4 &  1.71 & 8.97e-3 &  1.50 \\
 5.00e-2 & 1.92e-3 &  2.91 & 5.39e-2 &  2.15 & 1.53e-2 &  2.62 & 1.53e-4 &  2.62 & 1.59e-3 &  2.49 \\
 2.50e-2 & 3.26e-4 &  2.56 & 1.12e-2 &  2.26 & 1.26e-3 &  3.61 & 1.26e-5 &  3.61 & 1.49e-4 &  3.41 \\
 1.25e-2 & 5.66e-5 &  2.53 & 2.23e-3 &  2.33 & 1.23e-4 &  3.35 & 1.23e-6 &  3.35 & 1.97e-5 &  2.92
\end{tabular}
\end{center}
\end{table}

Errors and estimated convergence order for the
discretize-then-optimize approach as well as the
optimize-then-discretize approach using linear
($k = \ell = m = 1$) and quadratic ($k = \ell = m = 2$)
finite elements are given in Tables~\tref{tab:111-ex2}
and \tref{tab:222-ex2}.
All the data in Tables~\tref{tab:111-ex2}
and \tref{tab:222-ex2} correspond to the case $\Pe_e > 1$.
The sizes of the smallest and largest systems 
\eref{eq:model-disc-opt-cond} and \eref{eq:model-opt-cond-disc}
arising in our calculations are $198\times 198$ and 
$155043 \times 155043$, respectively.  
To avoid contamination of the convergence errors by the truncation
of an iterative scheme, these systems were solved using a sparse LU-decomposition.

In this example our exact adjoints and controls are designed to be functions with
small gradients. If linear finite element approximations are used, the observed
convergence order for the SUPG error in the computed ajoints and the 
computed states is greater than one for both approaches.
See Table~\tref{tab:111-ex2}.
The observed convergence order for the $L^2$-error in the computed adoints
is only one for the discretze-then-optimize approach, while for
the optimize-then-discretize approach the  observed convergence order for 
the $L^2$-error in the computed adoints is one higher than the
observed convergence order for the SUPG-error. In this example
the optimize-then-discretize approach produced better approximations.

If quadratic finite elements are used, i.e., if $k=m=\ell =2$,
then the optimize-then-discretize approach leads to superior results.
See Table~\tref{tab:222-ex2}. For this approach, the observed
convergence order for the SUPG error in the computed ajoints and the 
computed states is greater than the guaranteed convergence order of two.
The  observed convergence order for the $L^2$-error in the computed adoints 
is one higher than the observed convergence order for the SUPG-error,
which leads to an observed convergence order greater than three for
the $L^2$-error in the controls.
This is different for discretze-then-optimize approach. Initially
the observed convergence orders for the  SUPG-errors in states and adjoints
as well as the $L^2$-error in control are comperable to those achieved
by the optimize-then-discretize approach, but deteriorates subsequently.

We note that in this example the gradients of $y_h$ are almost perpendicular
to $\bc$. Because of this feature, the standard Galerkin finite element method 
produced good solutions to the optimal control problem.

\subsection{Example 3}   \label{sec:ex3}

In our third example we use $\Omega = (0,1)^2$ and
\[
 \Gamma_d = \partial \Omega, \quad 
 \bc(x) = \left(\cos(\theta) \; \sin(\theta) \right)^T, \;
  \theta = 45^\circ, \quad
  r = 0, 
\]
$\epsilon = 10^{-2}$ and $\omega = 1$.
The functions $f$, $d$, $g$ and $\widehat{y}$ are chosen so that the solution
of the optimal control problem \eref{eq:intro-obj}, \eref{eq:intro-state}
is given by
\[
    y_{\rm ex}(x)       = \eta(x_1) \eta(x_2) , \quad
    \lambda_{\rm ex}(x) = \mu(x_1) \mu(x_2),
\]  
where
\[
    \eta(z) =  z - \frac{ \exp((z-1)/\epsilon ) - \exp(-1/\epsilon) }
                         { 1 - \exp(-1/\epsilon) }, \quad
    \mu(z)
     = \left(1-z - \frac{\exp(-z/\epsilon)-\exp(-1/\epsilon)}
                        {1 - \exp(-1/\epsilon)} \right)
\]  
and $u_{\rm ex} = \omega^{-1}  \lambda_{\rm ex}$.
Our triangulation is computed by first subdividing $\Omega$ into
squares of size $h \times h$ and then dividing each square
into two triangles. Our choice for the stabilization
parameter is the same as in Section \tref{sec:ex2}.
Errors and estimated convergence order for the
discretize-then-optimize approach as well as the
optimize-then-discretize approach using linear
($k = \ell = m = 1$) and quadratic ($k = \ell = m = 2$)
finite elements are given in Tables~\tref{tab:111-ex3}
and \tref{tab:222-ex3}.
All but the last row in Table~\tref{tab:111-ex3} and all data
in Table~\tref{tab:222-ex3} correspond to the case $\Pe_e > 1$.
The sizes of the smallest and largest systems 
\eref{eq:model-disc-opt-cond} and \eref{eq:model-opt-cond-disc}
arising in our calculations are $363\times 363$ and 
$77763 \times 77763$, respectively.  
These systems were solved using a sparse LU-decomposition.

\begin{figure}[thb]


\begin{minipage}[t]{0.48\textwidth}
\begin{center}
\includegraphics[width=\textwidth]{./figures/control_ex3_nx10_basis2_disc1}
discretize-then-optimize
\end{center}
\end{minipage}\hfil
\begin{minipage}[t]{0.48\textwidth}
\begin{center}
\includegraphics[width=\textwidth]{./figures/control_ex3_nx10_basis2_disc0}
optimize-then-discretize
\end{center}
\end{minipage}

\caption{Computed controls, Example 3, $k = \ell = m = 2$, $h=0.1$.
 \label{fig:control_ex3_nx10_basis2}}

\end{figure}

\begin{figure}[hbt]
\begin{minipage}[t]{0.48\textwidth}
\begin{center}
\includegraphics[width=\textwidth]{./figures/control_ex3_nx10_basis1_disc1}
discretize-then-optimize
\end{center}
\end{minipage}\hfil
\begin{minipage}[t]{0.48\textwidth}
\begin{center}
\includegraphics[width=\textwidth]{./figures/control_ex3_nx10_basis1_disc0}
optimize-then-discretize
\end{center}
\end{minipage}

\caption{Computed controls, Example 3, $k = \ell = m = 1$, $h=0.1$.
 \label{fig:control_ex3_nx10_basis1}}

\end{figure}

The observations drawn from Tables~\tref{tab:111-ex3} and
\tref{tab:222-ex3} for this example are very similar to those of Example
1. However, when quadratic finite elements are used, we observed small
node-to-node oscillations in the adjoints and controls computed by the
discretize-then optimize approach. These are not present in the adjoints
and controls computed by the optimize-then-discretize approach.  See
Figure \tref{fig:control_ex3_nx10_basis2}.  Such node-to-node
oscillations in the adjoints and controls did not develop in either
approach when linear finite elements are used as seen in Figure
\tref{fig:control_ex3_nx10_basis1}.  For better visibility we show the
results on a coarse grid, but the plots of our results on finer grid are
qualitatively comparable to those in Figures
\tref{fig:control_ex3_nx10_basis2} and
\tref{fig:control_ex3_nx10_basis1}.

We remark that for this example the standard Galerkin method 
produced poor results. For smaller diffusion $\epsilon$ even
SUPG using either approach did not produce satisfactory
approximations to the optimal control, states and adjoints for
coarser grids.

\begin{table}[htb]
\caption{Errors and estimated convergence order. Example 3, $k = \ell = m = 1$.
         \label{tab:111-ex3}}
\begin{center}

The discretize-then-optimize approach

\begin{tabular}{c|c|c|c|c|c|c|c|c|c|c}
           &\multicolumn{4}{|c|}{$\|y_h-y_{\rm ex}\|$}
           &\multicolumn{2}{|c|}{$\|u_h-u_{\rm ex}\|$}
           &\multicolumn{4}{|c}{$\|\lambda_h-\lambda_{\rm ex}\|$} \\
     $h$   &$\|\cdot\|_0$&order&$\|\cdot\|_{SD}$&order
           &$\|\cdot\|_0$&order 
           &$\|\cdot\|_0$&order&$\|\cdot\|_{SD}$&order \\ \hline
 1.00e-1 & 1.20e-1 &       & 1.25e+0 &       & 4.91e-2 &       & 1.09e-1 &       & 1.26e+0 &       \\ 
 5.00e-2 & 7.43e-2 &  0.69 & 1.11e+0 &  0.17 & 2.48e-2 &  0.98 & 6.31e-2 &  0.79 & 1.09e+0 &  0.20 \\ 
 2.50e-2 & 4.17e-2 &  0.83 & 7.58e-1 &  0.55 & 1.36e-2 &  0.87 & 3.28e-2 &  0.94 & 7.22e-1 &  0.60 \\ 
 1.25e-2 & 1.46e-2 &  1.51 & 3.83e-1 &  0.99 & 5.88e-3 &  1.21 & 1.13e-2 &  1.54 & 3.69e-1 &  0.97 \\ 
 6.25e-3 & 3.86e-3 &  1.92 & 1.83e-1 &  1.07 & 1.68e-3 &  1.81 & 2.99e-3 &  1.92 & 1.81e-1 &  1.03 
\end{tabular}
\end{center}

\begin{center}

The optimize-then-discretize approach

\begin{tabular}{c|c|c|c|c|c|c|c|c|c|c}
           &\multicolumn{4}{|c|}{$\|y_h-y_{\rm ex}\|$}
           &\multicolumn{2}{|c|}{$\|u_h-u_{\rm ex}\|$}
           &\multicolumn{4}{|c}{$\|\lambda_h-\lambda_{\rm ex}\|$} \\
     $h$   &$\|\cdot\|_0$&order&$\|\cdot\|_{SD}$&order
           &$\|\cdot\|_0$&order 
           &$\|\cdot\|_0$&order&$\|\cdot\|_{SD}$&order \\ \hline
 1.00e-1 & 1.22e-1 &       & 1.25e+0 &       & 1.18e-1 &       & 1.18e-1 &       & 1.25e+0 &       \\ 
 5.00e-2 & 7.54e-2 &  0.70 & 1.11e+0 &  0.17 & 7.36e-2 &  0.69 & 7.36e-2 &  0.69 & 1.11e+0 &  0.17 \\ 
 2.50e-2 & 4.22e-2 &  0.84 & 7.59e-1 &  0.55 & 4.12e-2 &  0.83 & 4.12e-2 &  0.83 & 7.56e-1 &  0.55 \\ 
 1.25e-2 & 1.47e-2 &  1.52 & 3.83e-1 &  0.99 & 1.45e-2 &  1.51 & 1.45e-2 &  1.51 & 3.82e-1 &  0.98 \\ 
 6.25e-3 & 3.88e-3 &  1.92 & 1.83e-1 &  1.07 & 3.82e-3 &  1.92 & 3.82e-3 &  1.92 & 1.83e-1 &  1.06 
\end{tabular}
\end{center}
\end{table}

\begin{table}[hbt]
\caption{Errors and estimated convergence order. Example 3, $k = \ell = m = 2$.
         \label{tab:222-ex3}}
\begin{center}

The discretize-then-optimize approach

\begin{tabular}{c|c|c|c|c|c|c|c|c|c|c}
           &\multicolumn{4}{|c|}{$\|y_h-y_{\rm ex}\|$}
           &\multicolumn{2}{|c|}{$\|u_h-u_{\rm ex}\|$}
           &\multicolumn{4}{|c}{$\|\lambda_h-\lambda_{\rm ex}\|$} \\
     $h$   &$\|\cdot\|_0$&order&$\|\cdot\|_{SD}$&order
           &$\|\cdot\|_0$&order 
           &$\|\cdot\|_0$&order&$\|\cdot\|_{SD}$&order \\ \hline
 2.00e-1 & 1.13e-1 &       & 6.56e-1 &       & 4.19e-2 &       & 1.09e-1 &       & 7.64e-1 &       \\
 1.00e-1 & 6.07e-2 &  0.89 & 9.12e-1 & -0.48 & 1.60e-2 &  1.39 & 5.96e-2 &  0.87 & 9.50e-1 & -0.31 \\
 5.00e-2 & 2.73e-2 &  1.15 & 7.56e-1 &  0.27 & 1.29e-2 &  0.31 & 3.60e-2 &  0.73 & 1.04e+0 & -0.13 \\
 2.50e-2 & 5.42e-3 &  2.33 & 2.92e-1 &  1.37 & 6.53e-3 &  0.98 & 1.47e-2 &  1.29 & 7.05e-1 &  0.56 \\
 1.25e-2 & 6.35e-4 &  3.09 & 8.09e-2 &  1.85 & 2.16e-3 &  1.60 & 4.29e-3 &  1.78 & 3.96e-1 &  0.83
\end{tabular}
\end{center}

\begin{center}

The optimize-then-discretize approach

\begin{tabular}{c|c|c|c|c|c|c|c|c|c|c}
           &\multicolumn{4}{|c|}{$\|y_h-y_{\rm ex}\|$}
           &\multicolumn{2}{|c|}{$\|u_h-u_{\rm ex}\|$}
           &\multicolumn{4}{|c}{$\|\lambda_h-\lambda_{\rm ex}\|$} \\
     $h$   &$\|\cdot\|_0$&order&$\|\cdot\|_{SD}$&order
           &$\|\cdot\|_0$&order 
           &$\|\cdot\|_0$&order&$\|\cdot\|_{SD}$&order \\ \hline
 2.00e-1 & 1.14e-1 &       & 6.54e-1 &       & 1.13e-1 &       & 1.13e-1 &       & 6.78e-1 &       \\
 1.00e-1 & 6.13e-2 &  0.90 & 9.14e-1 & -0.48 & 6.05e-2 &  0.90 & 6.05e-2 &  0.90 & 9.07e-1 & -0.42 \\
 5.00e-2 & 2.76e-2 &  1.15 & 7.57e-1 &  0.27 & 2.72e-2 &  1.15 & 2.72e-2 &  1.15 & 7.54e-1 &  0.27 \\
 2.50e-2 & 5.43e-3 &  2.34 & 2.92e-1 &  1.38 & 5.42e-3 &  2.33 & 5.42e-3 &  2.33 & 2.92e-1 &  1.37 \\
 1.25e-2 & 6.35e-4 &  3.10 & 8.09e-2 &  1.85 & 6.35e-4 &  3.09 & 6.35e-4 &  3.09 & 8.09e-2 &  1.85 
\end{tabular}
\end{center}
\end{table}

\section{Conclusions}   \label{sec:concl}

We have studied the effect of the SUPG finite element method on the
discretization of optimal control problems governed by the linear
advection-diffusion equation.  Two approaches for the computation of
approximate controls and corresponding states and adjoints were compared: The
discretize-then-optimize approach and the optimize-then-discretize
approach.  Theoretical and numerical studies of the error between the exact
solution of the control problem and its approximation were provided.  Our
theoretical results show that the optimize-then-discretize approach leads to
asymptotically better approximate solutions than the
discretize-then-optimize approach.  The theoretical results also indicate
that the differences in solution quality is small when piecewise linear
polynomials are used for the discretization of states, adjoints and controls,
but that they can be significant if higher-order finite elements are used for
the states and the adjoints. There is always a significant difference in
quality of the adjoints computed by the discretize-then-optimize approach
and the optimize-then-discretize approach if finite element approximations
with polynomial degree greater than one are used, and the
optimize-then-discretize approach leads to better adjoint approximations.
However, our numerical results have also shown that this large difference in
the quality of adjoints does not necessarily imply a large difference in the
quality of the controls. 

Often the observed error in the controls computed by the
discretize-then-optimize approach and the optimize-then-discretize
approach is rather similar -- even if the adjoints computed using both
approaches are significantly different. This seems to be related to the fact
that we consider distributed controls and that errors in the controls are
measured in the $L^2$ norm whereas errors in the adjoints are measured in the
SUPG-norm. Since the distributed controls are multiples of the adjoints, our
numerical results indicate that the $L^2$-error in the adjoints is much
smaller than the error in the SUPG-norm.  Whether these good convergence
properties in the control also materialize if Neumann or Dirichlet boundary
controls are used or if other objective functionals acting on the state are
given is part of future studies.  Another subject of future study is the
influence of stabilization methods on the control of systems of advection
diffusion equations, like the Navier-Stokes equations, where additional
inconsistencies in the discretize-then-optimize approach can occur.

We conclude by reiterating  that care is required when using the
SUPG method for the solution of optimal control problems.
If the discretize-then-optimize approach with SUPG is used, the order with 
which the computed solutions of the optimal control converge to the exact 
solution may be much lower than what one would expect from the solution of 
a single advection diffusion equation using SUPG. The asymptotic
convergence behavior expected from the SUPG method applied to a single
advection diffusion equation can be maintained for the optimal control
problem if the optimize-then-discretize approach is used.

\section*{Acknowledgements}
The authors would like to thank Prof.\ M.~Behr,
Dept.\ of Mechanical Engineering, Rice University, for valuable
discussions on stabilization methods.

\bibstyle{siam}
\bibliography{/home/heinken/tex/bib/heinken,/home/heinken/tex/bib/references}
\bibliographystyle{siam}


\end{document}